\documentclass[a4paper]{amsart} 

\textwidth=6in
\hoffset=-0.6in
\textheight=23cm
\voffset=-1.7cm

\newcommand{\f}{\frac}

\newcommand{\del}{\partial}

\newcommand{\im}{\operatorname{Im}}

\newcommand{\R}{\mathbb R}
\newcommand{\C}{\mathbb C}
\newcommand{\N}{\mathbb N}
\newcommand{\Z}{\mathbb Z}

\newcommand{\re}{\operatorname{Re}}
\newcommand{\eps}{\varepsilon}
\renewcommand{\epsilon}{\varepsilon}

\newcommand{\Om}{\Omega}

\newcommand{\dist}{\operatorname{dist}}

\newcommand{\h}{\mathcal{H}}

\newenvironment{enuma}{\begin{enumerate}[(a)]}{\end{enumerate}}
\newenvironment{enumi}{\begin{enumerate}[(i)]}{\end{enumerate}}

\usepackage{etex}
\usepackage[english]{babel}
\usepackage{tikz}
\usetikzlibrary{arrows,decorations.pathmorphing,backgrounds,positioning,fit,petri,patterns}
\usepackage{pgfplots}
\usepackage[utf8]{inputenc}
\usepackage{graphicx}
\usepackage[colorlinks=true,linkcolor=red,citecolor=blue]{hyperref}
\usepackage{amsfonts}
\usepackage{enumerate}
\usepackage{booktabs}
\usepackage{array} 
\usepackage{paralist} 
\usepackage{subfig} 
\usepackage{mathtools}
\usepackage{tabu}
\usepackage{amsthm}
\usepackage{empheq}
\usepackage{amsopn}
\usepackage{dsfont}
\usepackage{wrapfig}
\usepackage[font=small]{caption}
\usepackage{units}
\usepackage[symbol]{footmisc}
\usepackage{todonotes}
\usepackage{amssymb}
\usepackage{pdfpages}
\usepackage{setspace}
\usepackage{float}

\allowdisplaybreaks
\numberwithin{equation}{section}

\theoremstyle{definition}
\newtheorem{de}{Definition}[section]

\theoremstyle{plain}
\newtheorem{prop}[de]{Proposition}
\newtheorem{lemma}[de]{Lemma}
\newtheorem{theorem}[de]{Theorem}

\theoremstyle{remark}
\newtheorem{remark}[de]{Remark}

\usepackage[ruled,vlined]{algorithm2e}
\usepackage{soul}
\usepackage{multirow,booktabs}
\usepackage{scrextend}
\usepackage[outline]{contour}

\newcommand{\SCI}{\operatorname{SCI}}

\renewcommand{\i}{\mathrm{i}}
\newcommand{\Lin}{M_{\mathrm{inner}}}
\newcommand{\Lout}{M_{\mathrm{outer}}}
\newcommand{\diag}{\operatorname{diag}}
\newcommand{\HD}{H_{\mathrm D}}
\newcommand{\Linull}{M_{\mathrm{inner},0}}
\newcommand{\BJ}{\mathcal{J}}
\newcommand{\BH}{\mathcal{H}}
\newcommand{\Res}{\operatorname{Res}}
\newcommand{\cO}{\mathcal{O}}

\title{Computing the sound of the sea in a seashell}
\author{Jonathan Ben-Artzi}
\email{Ben-ArtziJ@cardiff.ac.uk}
\author{Marco Marletta}
\email{MarlettaM@cardiff.ac.uk}
\author{Frank R\"{o}sler}
\email{RoslerF@cardiff.ac.uk}
\thanks{JBA acknowledges support from an Engineering and Physical Sciences Research Council Fellowship EP/N020154/1 and MM acknowledges support from an Engineering and Physical Sciences Research Council Grant EP/T000902/1. FR acknowledges support from the European Union's Horizon 2020 Research and Innovation Programme under the Marie Sklodowska-Curie grant agreement No. 885904. The authors thank the anonymous referees for their insightful comments which helped improve the presentation of this paper.}
\thanks{Communicated by Arieh Iserles}
\address{School of Mathematics, Cardiff University, Senghennydd Road, Cardiff CF24 4AG, Wales, UK}
\date\today
\keywords{Helmholtz resonator, Dirichlet-to-Neumann map, Solvability Complexity Index, Computational complexity}
\subjclass[2010]{35B34, 35J05, 47N40, 47N50, 68Q25}

\setlength{\marginparwidth}{2.2cm}

\begin{document}

\maketitle

\begin{abstract}
	The question of whether there exists an approximation procedure to compute the resonances of any Helmholtz resonator, regardless of its particular shape, is addressed. A positive answer is given, and it is shown that all that one has to assume is that the resonator chamber is bounded and that its boundary is $\mathcal C^2$. The proof is constructive, providing a universal algorithm which only needs to access the values of the characteristic function of the chamber at any requested point.
\end{abstract}
%
%
\section{Introduction}
 This paper provides an affirmative answer to the following question:
\begin{quote}\emph{Does there exist a  universal algorithm for computing the  resonances of the Laplacian in $\R^2\setminus\overline{U}$ for any open bounded set $U\subset\R^2$?}
\end{quote}
Any domain $U\subset\R^d$ gives rise to resonances, i.e. special frequencies that are `nearly' eigenvalues of the Laplacian in $\R^d\setminus\overline{U}$ (with appropriate boundary conditions). The most famous example is the ``sound of the sea'' in a seashell:  when we hold a seashell against our ear we hear  frequencies that are nearly the eigenvalues of the Laplacian in the closed cavity with Neumann boundary conditions. This is also known as a \emph{Helmholtz resonator} \cite{H1863}. We therefore refer to $U$ as a ``resonator''.

We are interested in computing (Dirichlet) resonances in a way that is independent of the domain $U$ itself; i.e. $U$ is the input of the problem, and the computation returns the associated resonances. We work in the plane, i.e. $d=2$, but our results can be adapted to higher dimensions.	
To our best knowledge this is the first time this question is addressed. Furthermore, the proof of existence  provides an actual algorithm (that is, the proof is constructive). We test this algorithm on some standard examples, and compare to known results.

The framework required for this analysis is furnished by the \emph{Solvability Complexity Index} ($\SCI$), which is an abstract theory for the classification of the computational complexity of problems that are infinite-dimensional. This framework has been developed over the last decade by Hansen and collaborators (cf. \cite{Hansen11,AHS}) and draws inspiration from   the seminal result \cite{DM} on solving quintic equations via a \emph{tower of algorithms}. {\bf We therefore emphasize that ours is an abstract result in analysis,  not  in numerical analysis.}
\subsection{Resonances}\label{sec:background}
Let us first define what we mean by a resonance. 
Let $U\subset\R^d$ be an open set and assume that  $\partial U\in\mathcal{C}^2$. Let $H$ be the Laplacian in $L^2(\R^d\setminus\overline U)$ with homogeneous Dirichlet boundary conditions on $\del U$. Resonances of $H$ can be defined via analytic continuation of the associated Dirichlet-to-Neumann (DtN) operators. Indeed, we can start from a characterization of eigenvalues and perform analytic continuation as follows. Denote the spectral parameter by $k^2$ (with the branch cut of the complex square root running along the positive real line). Then $k^2$ is an eigenvalue of $H$ if and only if there exists a function $u\in L^2(\R^d\setminus\overline U)$, such that $(-\Delta-k^2)u=0$ in $\R^d\setminus\overline U$ and $u=0$ on $\del U$. The existence of such a $u$ is equivalent to the following. 
Let $R>0$ be such that $U\subset B_R$ (the open ball of radius $R$ around 0). Denote by $M_\text{inner}(k)$ and $M_\text{outer}(k)$, respectively, the inner and outer DtN maps on $\del B_R$ associated with $-\Delta-k^2$. $M_\text{inner}(k)$ and $M_\text{outer}(k)$ can be shown to be analytic operator-valued functions of $k$ in the upper half plane $\C^+:=\{z\in\C\,|\,\im(z)>0\}$. If there exists an eigenfunction $u$ of $H$ with eigenvalue $k^2$, then
\begin{align*}
	u|_{\del B_R}\in \ker(M_\text{inner}(k)+M_\text{outer}(k)).
\end{align*}
Conversely, if $\phi\in \ker(M_\text{inner}(k)+M_\text{outer}(k))$, then there exist solutions $u_{\text{inner}}$ and $u_\text{outer}$ of $(-\Delta-k^2)u=0$ in $B_R\setminus \overline U$ and $\R^d\setminus \overline{B}_R$ respectively, such that $u_{\text{inner}}|_{\del B_R}=u_{\text{outer}}|_{\del B_R}=\phi$ and $\del_\nu u_{\text{inner}}|_{\del B_R}=-\del_\nu u_{\text{outer}}|_{\del B_R}$ (note that the normal vector $\nu$ is always taken to point to the exterior of the domain considered, that is, away from 0 for $B_R$ and towards 0 for $\R^d\setminus B_R$). Hence, the function
\begin{align}\label{eq:gluing_u}
	u:=\begin{cases}
		u_\text{inner} & \text{ on } B_R\setminus\overline U\\
		u_\text{outer} & \text{ on } \R^d\setminus \overline{B}_R
	\end{cases}
\end{align}
is in $H^2(\R^d\setminus \overline{U})$.
This shows that a complex number $k\in\C^+$ in the upper half plane is a (square root of an) eigenvalue if and only if $\ker(M_\text{inner}(k)+M_\text{outer}(k))\neq\emptyset$.
A \emph{resonance} can now be defined as follows. 
\begin{de}[Resonance]\label{def:res}
	Let us use the same symbols $M_\text{inner}(\cdot),\,M_\text{outer}(\cdot)$ to denote the meromorphic continuations of the DtN maps to all of $\C$. A number $k\in\C^-:=\{z\in\C\,|\,\im(z)<0\}$ in the lower half plane is called a \emph{resonance} of $H$, if 
	\begin{align}\label{eq:Ker=0}
		\ker(M_\text{inner}(k)+M_\text{outer}(k))\neq\{0\}.
	\end{align}
\end{de}
\begin{remark}
	Resonances in the sense of Definition \ref{def:res} are well-defined. Indeed, $M_{\text{outer}}(k)$ is well-defined for all $k\in\C$ and the only poles of $M_\text{inner}(k)$ are equal to the Dirichlet eigenvalues of $B_R\setminus \overline U$. But these all lie on the real axis, which is excluded in Definition \ref{def:res}. 

	Moreover, an argument similar to the one surrounding eq. \eqref{eq:gluing_u} shows that the nontriviality of $\ker(M_\text{inner}(k)+M_\text{outer}(k))$ is independent of the value of $R$ as long as $U\subset B_{R}$.
\end{remark}
\begin{remark}
	In two dimensions, the DtN maps $M_\text{inner},M_\text{outer}$ can actually be analytically continued to the \emph{logarithmic cover} of $\C$, rather than just $\C^-$ (cf. \cite[Sec. 3.1.4]{DZ}). However, since the vast majority of the literature is concerned with resonances near the real axis, we decided to simplify our presentation by considering only resonances in $\C^-$. A strategy for dealing with the general case has been outlined in \cite[Sec. 4.2.2]{BMR2020}.
\end{remark}
We show that resonances can be computed as the limit of a sequence of approximations, each of which can be computed precisely using finitely many arithmetic operations and accessing finitely many values of the characteristic function $\chi_U$ of $U$. The proof is constructive: we define an algorithm and prove its convergence. We emphasize that this \emph{single} algorithm is valid for \emph{any} open $U\subset\R^2$ with $\partial U\in\mathcal C^2$.  We  implement this algorithm and compare its output to known results. 
\subsection{The Solvability Complexity Index}\label{subsec:sci}
The Solvability Complexity Index (SCI) addresses questions which are at the nexus of pure and applied mathematics, as well as computer science: 
 	\begin{quote}\emph{How do we compute objects that are ``infinite'' in nature if we can only handle a finite amount of information and perform finitely many mathematical operations? Indeed, what do we even mean by ``computing'' such an object?}
	\end{quote}
These broad topics are addressed in the sequence of papers \cite{Hansen11,AHS,Ben-Artzi2015a}. Let us summarize the main definitions and discuss how these relate to our problem of finding resonances:
\begin{de}[Computational problem]\label{def:computational_problem}
	A \emph{computational problem} is a quadruple $(\Om,\Lambda,\Xi,\mathcal M)$, where 
	\begin{enumi}
		\item $\Om$ is a set, called the \emph{primary set},
		\item $\Lambda$ is a set of complex-valued functions on $\Om$, called the \emph{evaluation set},
		\item $\mathcal M$ is a metric space,
		\item $\Xi:\Om\to \mathcal M$ is a map, called the \emph{problem function}.
	\end{enumi}
\end{de}
\begin{de}[Arithmetic algorithm]\label{def:Algorithm}
	Let $(\Om,\Lambda,\Xi,\mathcal M)$ be a computational problem. An \emph{arithmetic algorithm} is a map $\Gamma:\Om\to\mathcal M$ such that for each $T\in\Om$ there exists a finite subset $\Lambda_\Gamma(T)\subset\Lambda$ such that
	\begin{enumi}
		\item the action of $\Gamma$ on $T$ depends only on $\{f(T)\}_{f\in\Lambda_\Gamma(T)}$,
		\item for every $S\in\Om$ with $f(T)=f(S)$ for all $f\in\Lambda_\Gamma(T)$ one has $\Lambda_\Gamma(S)=\Lambda_\Gamma(T)$,
		\item the action of $\Gamma$ on $T$ consists of performing only finitely many arithmetic operations on $\{f(T)\}_{f\in\Lambda_\Gamma(T)}$.
	\end{enumi}
\end{de}
\begin{de}[Tower of arithmetic algorithms]\label{def:Tower}
	Let $(\Om,\Lambda,\Xi,\mathcal M)$ be a computational problem. A \emph{tower of algorithms} of height $k$ for $\Xi$ is a family $\Gamma_{n_1,n_2,\dots,n_k}:\Om\to\mathcal M$ of arithmetic algorithms such that for all $T\in\Om$
	\begin{align*}
		\Xi(T) = \lim_{n_k\to\infty}\cdots\lim_{n_1\to\infty}\Gamma_{n_1,n_2,\dots,n_k}(T).
	\end{align*}
\end{de}
\begin{de}[SCI]
	A computational problem $(\Om,\Lambda,\Xi,\mathcal M)$ is said to have a \emph{Solvability Complexity Index} ($\SCI$) of $k\in\N$ if $k$ is the smallest integer for which there exists a tower of algorithms of height $k$ for $\Xi$.
	If a computational problem has solvability complexity index $k$, we write \begin{align*}
 			\SCI(\Om,\Lambda,\Xi,\mathcal M)=k.
		 \end{align*}
If there exists a family $\{\Gamma_n\}_{n\in\N}$ of arithmetic algorithms and $N_1\in\N$ such that $\Xi=\Gamma_{N_1}$ then we define $\SCI(\Om,\Lambda,\Xi,\mathcal M)=0$.
\end{de}

\begin{remark}
One can even delve deeper into the $\SCI$ classification by considering the so-called \emph{$\SCI$ Hierarchy} which was introduced in \cite{AHS}. In a nutshell, this hierarchy considers not only how many limits a particular computational problem requires, but also whether one can establish   \emph{error bounds}. For the interested reader, we discuss this hierarchy (and why we are unable to obtain error bounds) in Appendix \ref{app:sci-hierarchy}.
\end{remark}

\subsection{Setting of the problem and main result}\label{sec:Setting}
Let us describe the elements comprising our computational problem, followed by our main theorem.\\

\emph{(i) \underline{The primary set $\Omega$}.}
We define
\begin{equation*}
	\Omega
	=
	\left\{
	\emptyset\neq U\subset\R^2\text{ open}\,|\,U\text{ is bounded and }\partial U\in\mathcal C^2
	\right\}.
\end{equation*}
~\\

\emph{(ii) \underline{The evaluation set $\Lambda$}.}
The evaluation set, which describes the data  at our algorithm's disposal, is comprised of (all points in) the set $U$:
\begin{align}\label{eq:evaluation_set}
	\Lambda &:= \{U\mapsto \chi_U(x)\,|\,x\in\R^2\}.
\end{align}
Providing the values of the characteristic functions $U\mapsto \chi_U(x)$ in $\Lambda$ means that for every $x\in\R^2$ we can test whether $x$ is included in $U$ or not.
\begin{remark}
	Our computations will involve not only the values $\chi_U(x)$, but also the values of the Bessel and Hankel functions $J_n(z)$, $H^{(1)}_n(z)$, $z\in\C$, $n\in\N$ as well as the exponentials $e^{\i n\theta}$, $\theta\in[0,2\pi)$, $n\in\N$. These do not have to be included as part of the evaluation set because  they can be approximated to arbitrary precision with explicit error bounds. In order to keep the presentation clear and concise, we will assume the values $J_n(z)$, $H^{(1)}_n(z)$, $e^{\i n\theta}$ are known and not track these explicit errors in our estimates.\\
\end{remark}

\emph{(iii) \underline{The metric space $\mathcal{M}$}.}
$\mathcal M$ is the space $\mathrm{cl}(\C)$ of all closed  subsets of $\C$ equipped with the Attouch-Wets metric, generated by the following distance function:

\begin{de}[Attouch-Wets distance] Let $A,B$ be non-empty closed sets in $\C$. The  \emph{Attouch-Wets distance} between them is defined as
\begin{align*}
	d_{\mathrm{AW}}(A,B) = \sum_{n=1}^\infty 2^{-n}\min\left\{ 1\,,\,\sup_{|x|<n}\left| \dist(x,A) - \dist(x,B) \right| \right\}.
\end{align*} 
\end{de}
Note that if $A,B\subset\C$ are bounded, then $d_{\mathrm{AW}}$ is equivalent to the Hausdorff distance $d_\mathrm{H}$. Furthermore, it can be shown (cf. \cite[Ch. 3]{beer}) that 
	\begin{equation}\label{eq:Attouch-Wets}
		d_\mathrm{H}(A_n\cap B, A\cap B)\to 0 \text{ for all }B\subset\C \text{ compact } \;\Rightarrow\; d_\mathrm{AW}(A_n,A)\to 0.
	\end{equation}
~\\

\emph{(iv) \underline{The problem function $\Xi$}.}
The problem function $\Xi(U)=\Res(U)$ is the map that associates to each $U\in\Omega$ the set of resonances (as defined in Definition \ref{def:res}) of $H$.\\

With these at hand, we can state our main theorem:
\begin{theorem}\label{th:mainth}
	There exists a family of arithmetic algorithms $\{\Gamma_n\}_{n\in\N}$ such that $\Gamma_n(U)\to\Res(U)$ as $n\to+\infty$ for any $U\in\Omega$, where the convergence is in the sense of the Attouch-Wets metric. That is, $\SCI(\Omega,\Lambda,\Res(\cdot),(\mathrm{cl}(\C),d_\mathrm{AW}))=1$.
\end{theorem}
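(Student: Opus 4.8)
The plan is to build a single sequence $\{\Gamma_n\}$ of arithmetic algorithms converging to $\Res(U)$ in the Attouch-Wets metric, which by Definition~\ref{def:Tower} shows the $\SCI$ is at most $1$; since resonances generically cannot be obtained by a single algorithm with no limit, this will give $\SCI=1$. The strategy has three ingredients. First, \emph{discretize the DtN maps}. On $\partial B_R$ the outer DtN map $\Lout(k)$ is diagonalized by the Fourier basis $e^{\i n\theta}$ with explicit entries built from Hankel functions $H^{(1)}_n$, so it is known in closed form and can be truncated to an $N\times N$ matrix with controlled tail. The inner map $\Lin(k)$ depends on $U$ through the annular region $B_R\setminus\overline U$; here I would approximate $U$ from the inside and outside by finite unions of dyadic squares, decided using finitely many evaluations of $\chi_U$ (this is exactly what $\Lambda$ allows), solve the resulting finite-dimensional Dirichlet problem for $-\Delta-k^2$ on the polygonal approximation, and read off an approximate DtN matrix; the $\mathcal C^2$ assumption on $\partial U$ is what guarantees the pixelation converges and that the DtN maps of the approximating domains converge (in operator norm, locally uniformly in $k$ away from the real axis) to those of $U$.

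Second, \emph{turn "$\ker\neq\{0\}$" into a computable root-finding problem}. Resonances are precisely the $k\in\C^-$ where $M(k):=\Lin(k)+\Lout(k)$ is not injective. Working with the truncations $M_N(k)$, I would detect these via the smallest singular value $\sigma_{\min}(M_N(k))$ (or a determinant-type quantity) vanishing, evaluated on a fine grid of $k$ in a large box $\{|k|\le n,\ \im k\le -1/n\}$, with the grid spacing, truncation level $N$, and pixelation scale all slaved to $n$. For each $n$ one outputs the finite set of grid points where $\sigma_{\min}$ drops below a threshold $\delta_n\to 0$; this is a finite computation on finitely many values of $\chi_U$, $J_n$, $H^{(1)}_n$, $e^{\i n\theta}$, hence an arithmetic algorithm in the sense of Definition~\ref{def:Algorithm}.

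Third, \emph{prove convergence in $d_{\mathrm{AW}}$}. By the implication \eqref{eq:Attouch-Wets} it suffices to show Hausdorff convergence of $\Gamma_n(U)\cap B$ to $\Res(U)\cap B$ for every fixed compact $B\subset\C$. On such a $B$ one stays a fixed distance from the real axis, so $\Lin(k)$ has no poles there and $M(k)$ is analytic and Fredholm of index $0$; its zero set (the resonances) is discrete, and the singular-value function $k\mapsto\sigma_{\min}(M(k))$ is continuous and vanishes exactly on $\Res(U)$. The two halves of Hausdorff convergence come from: (no spurious limits) if $k_n\in\Gamma_n(U)$ and $k_n\to k_\infty\in B$, then $\sigma_{\min}(M_N(k_n))\to\sigma_{\min}(M(k_\infty))=0$ by the operator-norm convergence $M_N\to M$ and the domain approximation, forcing $k_\infty\in\Res(U)$; (no missed resonances) near a genuine resonance $k^*$, an argument-principle / continuity estimate shows $\sigma_{\min}(M_N(\cdot))$ is small on a small disc around $k^*$ for large $n$, so the grid catches a point within $o(1)$ of $k^*$. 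The main obstacle I expect is quantitative control of the domain approximation: proving that the DtN map of the pixelated domain converges to that of $U$, locally uniformly in $k\in\C^-$, with a rate one can compare against a chosen threshold $\delta_n$ — this requires $\mathcal C^2$ boundary regularity, care with the (analytically continued, possibly non-selfadjoint) inner problem, and uniform resolvent-type bounds on the annulus; everything else (truncating $\Lout$, sampling $\sigma_{\min}$ on a grid, the $d_{\mathrm{AW}}$ bookkeeping via \eqref{eq:Attouch-Wets}) is comparatively routine. Note that the construction yields convergence but no explicit error bound, which is why the result sits at $\SCI=1$ rather than in a higher level of the $\SCI$ hierarchy.
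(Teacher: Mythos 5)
Your high-level architecture matches the paper's: reduce to DtN maps on a circle, approximate the inner map by a finite element method on a pixelated domain constructed from finitely many $\chi_U$ evaluations, detect resonances by thresholding a determinant-type quantity on a $k$-grid, and use the implication \eqref{eq:Attouch-Wets} to pass from Hausdorff convergence on compacts to Attouch-Wets convergence. However, there are two genuine gaps.

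First, you never address how the algorithm determines a radius $R$ with $U\subset B_R$. Your entire construction begins ``on $\partial B_R$\dots'' as though $R$ were given, but $\Omega$ contains arbitrarily large domains and no such $R$ is supplied in the evaluation set $\Lambda$. Naively growing a search box does not suffice: at step $n$ you may observe points of $U$ only inside some ball, yet $U$ may have components farther out or too thin to register on the current grid, so the $R$ you commit to may be wrong for the first several million steps, and you must show the decision eventually stabilizes on a correct, $U$-dependent value using only finitely many $\chi_U$ evaluations per step. This is where the paper splits the proof: it first proves a Theorem~\ref{th:fixed_R}-style result on $\Omega_R$, and then adds an adaptive two-radius search (Algorithm~\ref{Alg:0} together with Lemma~\ref{lemma:R_n_eventually_constant}) which exploits the openness and boundedness of $U$ to show the inferred $R_n$ becomes eventually constant and eventually correct. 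Without some version of this step, your $\Gamma_n$ is only an algorithm on $\Omega_R$, not on $\Omega$.

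Second, you propose to test $\ker M(k)\neq\{0\}$ by looking at $\sigma_{\min}$ of raw $(2N+1)\times(2N+1)$ Fourier truncations of $M(k)=\Lin(k)+\Lout(k)$. But $M(k)$ is an unbounded operator whose diagonal entries grow like $2|n|/R$, and for such operators the finite-section method is not automatically consistent: neither vanishing of $\sigma_{\min}(M_N(k))$ at resonances nor its lower bound away from them follow without a preconditioning step. The paper's key structural move is to write $M(k)=\f2R\,\mathcal N^{1/2}\bigl(I+\f R2\mathcal N^{-1/2}(\BH+\BJ+\mathcal K)\mathcal N^{-1/2}-P_0\bigr)\mathcal N^{1/2}$, which converts the kernel condition into vanishing of a perturbation determinant $\det_{\lceil p\rceil}$ of an $I+C_p$ operator; Lemma~\ref{lemma:abstract_projection_estimate} and Lemma~\ref{lemma:determinant_rate} then give explicit Schatten/Lipschitz rates that make the $\log$-threshold in \eqref{eq:GammaQR} meaningful. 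Your proposal implicitly needs an equivalent quantitative statement (rate of convergence of whatever quantity you threshold on, uniformly on compacts in $k$), and as written the $\sigma_{\min}$ of a growing unregularized truncation does not deliver it. A small additional correction: the absence of error bounds is what prevents the problem from lying in $\Delta_1$ (or $\Sigma_1$, $\Pi_1$) within the $\SCI$ hierarchy; it is not itself the reason $\SCI=1$ rather than $0$.
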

\begin{remark}[The set $\Res(U)$ is not empty]
We note that whenever $U$ is not empty, the set $\Res(U)$ too is not empty, although the resonances can lie far below the real axis, depending on the geometry of $U$. We refer to \cite{Zworski1999} for an exposition on this matter.
\end{remark}
\begin{remark}[Neumann boundary conditions]
	In Subsection \ref{sec:neumann} we outline a strategy for adapting this result to Neumann boundary conditions, and implement a numerical model consisting of four circular holes: this is relevant for understanding resonances caused by oil-drilling platforms \cite{EP96}, for instance.
\end{remark}
Theorem \ref{th:mainth} is proved by identifying an operator of the form $\mathrm{Id}_{L^2}+\mathcal K(k)$, where $\mathcal K(k)$ is in some Schatten class $C_p$, with the property that $\ker(\mathrm{Id}_{L^2}+\mathcal K(k))\neq\{0\}$ if and only if
$\ker(M_\text{inner}(k)+M_\text{outer}(k))\neq\{0\}$. The former is equivalent to $\det_{\lceil p\rceil}(\mathrm{Id}_{L^2}+\mathcal K(k))=0$, where $\det_{\lceil p\rceil}$  denotes the $\lceil p\rceil$-perturbation determinant. This determinant is approximated via a finite element procedure on $B_R\setminus\overline U$ with explicit error bounds. A thresholding procedure then yields an approximation for the zero set of $\det_{\lceil p\rceil}(\mathrm{Id}_{L^2}+\mathcal K(k))$.
\subsection{Discussion}
The study of cavity resonances is much older than the study of quantum mechanical resonances. The foundational
work is generally ascribed to Helmholtz \cite{H1863}, who in the 1850s had constructed devices which were designed to 
identify special frequencies from within a sound wave. These {\em Helmholtz resonators} consisted of
a small aperture at one end to admit sound, and a larger opening at the other to emit it. 

The operator theoretic foundations of the study of cavity resonances are usually based on various approaches to the theory of 
scattering by obstacles. A good early treatment of both quantum-mechanical and obstacle scattering may be found in the seminal 
text of Newton \cite{Newton}. The book of  Lax and  Phillips \cite{LP} is perhaps the most famous work on scattering theory in a semi-abstract setting, though  the whole
 subject was extensively researched in the Soviet school by mathematicians such as Faddeev and Pavlov, see, e.g. \cite{FP},
 and also the monograph of Yafaev \cite{Y}.

There is extensive work  in the applied mathematics literature too, devoted to  estimating  resonances in
various asymptotic regimes, either geometrical or semiclassical: see \cite{HS,Helffer,Nedelec,PS,SZ} as starting
points. Assemblies of resonators are proposed as cloaking devices in a variety of contexts, see, e.g., \cite{Ammari}; the mathematical
treatment of acoustic waveguides is very similar to that of optical or quantum waveguides. A very up-to-date overview of the current state of the art in the subject may be found in \cite{DZ}.

Numerical approaches use a variety of techniques of which complex scaling (rediscovered as `perfectly matched layers'), boundary
integral techniques and combinations of these with special finite element methods are the most common.

Separately, recent years have seen a flurry of activity in research revolving around the $\SCI$ concept. In addition to \cite{Hansen11,AHS,Ben-Artzi2015a} which have been mentioned above, we point out \cite{Colbrook2019,Colbrook2019a} where some of the theory of spectral computations has been further developed; \cite{Rosler2019a}  where this has been applied to certain classes of unbounded operators; \cite{Becker2020b} where solutions of PDEs were considered; and \cite{Colbrook2019c} where the authors show how to perform certain spectral computations with error bounds.
\bigskip
\paragraph{\emph{Organization of the paper}}
In Section \ref{sec:DtN} we analyze the inner and outer DtN maps and obtain a new expression equivalent to \eqref{eq:Ker=0} in which $M_\text{inner}(k)+M_\text{outer}(k)$ is  replaced by an expression of the form $\mathrm{Id}_{L^2}$+perturbation.
Section \ref{sec:Matrix_Elements} is dedicated to the construction of a finite element method for the approximation of this perturbation, and in Section \ref{sec:Algorithm} the algorithm for approximating $\Res(U)$ is defined and shown to converge.
Finally, in Section \ref{sec:num} we provide some numerical examples.
\section{Formulas for the inner and outer DtN maps}\label{sec:DtN}
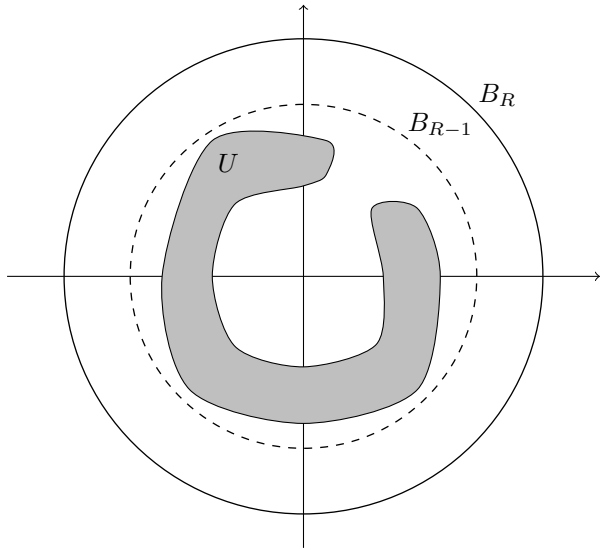
\begin{figure}
\centering
\begin{tikzpicture}[scale=0.3, line join=round]
	\draw[->] (-13,0) -- (13,0);
	\draw[->] (0,-12) -- (0,12);
	
	\filldraw[fill = gray!50] plot [smooth cycle, tension=0.5] coordinates {
		(-6.2,0) (-5,-5) (0,-6.5) (5,-5) (6,0) (5,3)
		(3,3) (3.5,0) (3.2,-3) (0,-4) (-3,-3.1) (-4,0) (-3,3.2) (0,4) (1,4.5) (1,6) (-4,6)
	};
	
	\draw (-3.3	,5) node{$U$};
	\draw[line width=0.5pt, dashed] (0,0) circle(7.6);
	\draw (6,6.7) node{$B_{R-1}$};
	\draw[line width=0.5pt] (0,0) circle(10.5);
	\draw (8.5,8) node{$B_R$};
\end{tikzpicture}
\caption{A sketch of the resonator $U$}
\end{figure}
As a first step, we prove a weaker version of Theorem \ref{th:mainth}. We show that under the additional assumption that the diameter of the  resonator is known \emph{a priori}, there exists an algorithm that computes the set of resonances in one limit. To this end we define
\begin{equation*}
	\Omega_R
	:=
	\left\{
		\emptyset\neq U\subset\R^2\text{ open}\,|\,\partial U\in\mathcal C^2,\,\overline{U}\subset B_{R-1}
	\right\}
\end{equation*}
where $B_\rho$ is the open ball about the origin of radius $\rho>0$. Then one has:
\begin{theorem}\label{th:fixed_R}
	There exists a family of arithmetic algorithms $\{\Gamma_n^R\}_{n\in\N}$ depending on $R$, such that $\Gamma_n^R(U)\to\Res(U)$ as $n\to+\infty$ for any $U\in\Omega_R$, where the convergence is in the sense of the Attouch-Wets metric. That is, $\SCI(\Omega_R,\Lambda,\Res(\cdot),(\mathrm{cl}(\C),d_\mathrm{AW}))=1$.
\end{theorem}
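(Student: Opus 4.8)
The plan is to reduce the resonance condition \eqref{eq:Ker=0} to a scalar equation involving a regularized determinant and then approximate that determinant by a finite element method with fully explicit error control, converting the zero set into a computable object via thresholding. Throughout, the assumption $\overline U\subset B_{R-1}$ is used so that $R$ provides a fixed annular collar $B_R\setminus B_{R-1}$ in which the outer problem lives, and one knows a priori how large a ball to work in.

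\textbf{Step 1: An $\mathrm{Id}+\text{compact}$ reformulation.} First I would write $M_\text{inner}(k)$ and $M_\text{outer}(k)$ explicitly on $\del B_R$ in the Fourier basis $\{e^{\i n\theta}\}_{n\in\Z}$: $M_\text{outer}(k)$ is the diagonal operator with symbol $-k\,\frac{H_n^{(1)\prime}(kR)}{H_n^{(1)}(kR)}$ coming from the radiating Hankel solutions, and $M_\text{inner}(k)$ is the DtN map of the bounded domain $B_R\setminus\overline U$ with Dirichlet data on $\del U$. The key is to factor out the leading-order (large-$|n|$) behaviour, which is $|n|/R$ for both maps, so that $M_\text{inner}(k)+M_\text{outer}(k) = A(k)(\mathrm{Id}_{L^2}+\mathcal K(k))$ for an invertible, explicitly invertible $A(k)$ and a $\mathcal K(k)$ lying in a Schatten class $C_p$ (the smoothing order gained by the Bessel/Hankel quotients, together with the $\mathcal C^2$ boundary regularity of $U$, determines $p$). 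Then $\ker(M_\text{inner}(k)+M_\text{outer}(k))\neq\{0\}$ iff $\ker(\mathrm{Id}_{L^2}+\mathcal K(k))\neq\{0\}$ iff $\det_{\lceil p\rceil}(\mathrm{Id}_{L^2}+\mathcal K(k))=0$, using the standard property that the regularized determinant vanishes exactly at the points where $\mathrm{Id}+\mathcal K$ fails to be injective.

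\textbf{Step 2: FEM approximation of $\mathcal K(k)$ with explicit error bounds.} The only non-explicit ingredient in $\mathcal K(k)$ is $M_\text{inner}(k)$, which requires solving the Helmholtz equation in $B_R\setminus\overline U$. Here I would triangulate $B_R$, detect which elements meet $U$ by querying $\chi_U$ at finitely many points (this is where $\Lambda$ enters), discard those elements to get a polygonal approximation of $B_R\setminus\overline U$, and solve the corresponding Galerkin problem. Using the $\mathcal C^2$ regularity of $\del U$, one gets quantitative convergence rates (in mesh size $h=h(n)$) for the approximate DtN map, hence for $\mathcal K_n(k)\to\mathcal K(k)$ in $C_p$-norm, uniformly for $k$ in compact subsets of $\C$ away from the real axis. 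Since $\det_{\lceil p\rceil}$ is locally Lipschitz on $C_p$, this gives $\det_{\lceil p\rceil}(\mathrm{Id}+\mathcal K_n(k))\to\det_{\lceil p\rceil}(\mathrm{Id}+\mathcal K(k))$ with an explicit rate; a further truncation of the Fourier series to $|n|\le N(n)$ modes reduces this to a genuine finite computation using only Bessel/Hankel/exponential values.

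\textbf{Step 3: From the determinant to the zero set, and Attouch-Wets convergence.} With $f_n(k):=\det_{\lceil p\rceil}(\mathrm{Id}+\mathcal K_n(k))$ computable to any precision and $f_n\to f$ locally uniformly, I would define $\Gamma_n^R(U)$ by evaluating $f_n$ on a fine grid in the square $[-n,n]\times[-n,0)$ and keeping grid points where $|f_n|$ falls below a threshold $\epsilon_n\da 0$ chosen to decay slower than the error $\|f_n-f\|_\infty$ on that box. A Rouché/minimum-modulus argument shows that the resulting set converges to $\{f=0\}\cap([-n,n]\times\C^-)=\Res(U)\cap([-n,n]\times\C^-)$ in Hausdorff distance on each compact box, which by \eqref{eq:Attouch-Wets} gives convergence in $d_\mathrm{AW}$; one must also check the algorithm does not spuriously include points near the real axis, handled by also shrinking the box away from $\im k=0$ or noting poles of $M_\text{inner}$ are real. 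The main obstacle is \textbf{Step 2}: obtaining FEM error bounds for the DtN map of a domain whose boundary is only known through pointwise queries of $\chi_U$ and is only $\mathcal C^2$ — in particular controlling the geometric error from approximating $\del U$ by polygons and ensuring all constants are explicit and depend only on $R$ (and not on finer geometric features of $U$), so that a single sequence of algorithms works uniformly over $\Omega_R$.
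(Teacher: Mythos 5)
Your proposal follows essentially the same route as the paper: an $\mathrm{Id}+C_p$ reformulation obtained by factoring out the $|n|/R$ growth in the diagonal parts of the two DtN maps, a regularized determinant, a finite element approximation of the inner map with quantified error from pointwise queries of $\chi_U$, a thresholded grid search, and local-to-global passage via compact boxes and \eqref{eq:Attouch-Wets}. Two minor clarifications relative to the paper: the Schatten membership does not come from any smoothing in the Bessel/Hankel quotients (which are merely bounded) but from conjugating by $\mathcal N^{-1/2}\in C_p$ on both sides; and since the FEM error constant depends on $U$ (which is permitted --- pointwise convergence over $\Omega_R$ is all that is required, not uniformity), the threshold must be chosen to decay more slowly than \emph{any} power of $n$, e.g.\ $1/\log n$ as in \eqref{eq:GammaQR}, rather than merely ``slower than the error.''
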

Henceforth, the radius $R>1$ will remain fixed until stated otherwise. 
In order to prove Theorem \ref{th:fixed_R}, we start by studying the DtN maps. The goal is to recast the formula \eqref{eq:Ker=0} into a form that can be implemented numerically. We introduce the orthonormal basis on $\partial B_R$
 	\begin{equation*}
	e_n(\theta):= \f{e^{\i n\theta}}{\sqrt{2\pi R}} ,\quad\theta\in[0,2\pi),\,n\in\Z,
	\end{equation*}
 which will be used frequently throughout the paper.
\subsection{The outer map}
The outer DtN map acts on a function $\phi\in H^1(\del B_R)$ as
\begin{align*}
	\Lout(k)\phi = \del_\nu u,
\end{align*}
where $u\in L^2(\R^2\setminus \overline{B}_R)$ solves the problem
\begin{align*}
\begin{cases}
	(-\Delta-k^2)u = 0 &\text{in }\R^2\setminus \overline{B}_R\\
	\hfill u = \phi &\text{on }\del B_R.
\end{cases}
\end{align*}
In the orthonormal basis $\left\{e_n\right\}_{n\in\Z}$ the map $\Lout(k)$ has the explicit representation
\begin{align*}
	\Lout(k) = \diag\left( -k\f{H_{|n|}^{(1)}{'}(kR)}{H_{|n|}^{(1)}(kR)} \,,\;n\in\Z \right),
\end{align*}
where $H_\nu^{(1)}$ denote the Hankel functions of the first kind.
Using well-known identities for Bessel functions (cf. \cite{Lebedev}), this can be rewritten as
\begin{equation}\label{eq:outer-dtn}
	\Lout(k) = \diag\left( \f{|n|}{R}-k\f{H_{|n|-1}^{(1)}(kR)}{H_{|n|}^{(1)}(kR)} \,,\;n\in\Z \right).
\end{equation}
Moreover, it can be seen that $\f{H_{|n|-1}^{(1)}(z)}{H_{|n|}^{(1)}(z)}\sim \f{z}{2|n|}$ as $|n|\to+\infty$, so that the matrix elements of $\Lout$ grow linearly in $|n|$.
\subsection{The inner map}
We first show that the inner DtN map can be decomposed into a part that does not depend on $U$ and a bounded part:
\begin{lemma}\label{lemma:Lambda_inner}
	Let $\Linull$ denote the free inner DtN map, i.e. the one with $U=\emptyset$. Then there exists a meromorphic operator-valued function  $\C\ni k\mapsto \mathcal{K}(k)$ with values in the bounded operators on $L^2(\del B_R)$ such that
\begin{equation}\label{eq:Inner_decomposition2}
	\Lin(k) = \Linull(k) + \mathcal{K}(k).
\end{equation}
\end{lemma}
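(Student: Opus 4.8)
The plan is to build the inner DtN map for the obstacle problem from the free one on $B_R$ by a layer-potential / Dirichlet-data transplantation argument, and to identify the difference as a finite-rank-plus-smoothing perturbation, hence bounded. Recall that $\Lin(k)\phi=\del_\nu u_{\mathrm{inner}}|_{\del B_R}$, where $u_{\mathrm{inner}}$ solves $(-\Delta-k^2)u=0$ in $B_R\setminus\overline U$ with $u|_{\del B_R}=\phi$ and $u|_{\del U}=0$, whereas $\Linull(k)\phi=\del_\nu v|_{\del B_R}$ with $v$ solving the same Helmholtz equation on all of $B_R$ with $v|_{\del B_R}=\phi$. Subtracting, the function $w:=u_{\mathrm{inner}}-v$ satisfies $(-\Delta-k^2)w=0$ in $B_R\setminus\overline U$, $w|_{\del B_R}=0$, and $w|_{\del U}=-v|_{\del U}$; thus $\mathcal K(k)\phi:=\del_\nu w|_{\del B_R}$, and the whole point is to show $\phi\mapsto\del_\nu w|_{\del B_R}$ is a bounded operator on $L^2(\del B_R)$ with meromorphic dependence on $k$.

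The key steps, in order. First, fix $k$ away from the (real) Dirichlet eigenvalues of $B_R\setminus\overline U$ and of $B_R$, so that both $u_{\mathrm{inner}}$ and $v$ exist uniquely; the map $\phi\mapsto v$ is the Poisson operator for the ball, which is smoothing, so $v|_{\del U}$ depends smoothly (in particular boundedly from $L^2(\del B_R)$ into, say, $C^\infty(\del U)$) on $\phi$, using that $\del U$ sits strictly inside $B_{R-1}$, a positive distance from $\del B_R$, and interior elliptic regularity for the Helmholtz equation. Second, solve the boundary value problem for $w$: given the Dirichlet data $(0$ on $\del B_R$, $g:=-v|_{\del U}$ on $\del U)$, the solution operator $g\mapsto w$ is bounded into $H^2(B_R\setminus\overline U)$ (again for $k$ off the real eigenvalue set, by Fredholm theory for the Dirichlet Laplacian on the $\mathcal C^2$ domain $B_R\setminus\overline U$). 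Third, because $w$ solves a homogeneous elliptic equation near $\del B_R$ and vanishes to infinite order in smoothness away from $\del U$, interior regularity gives that $w$ is smooth in a neighbourhood of $\del B_R$, so $\del_\nu w|_{\del B_R}$ makes sense and the composite map $\phi\mapsto\del_\nu w|_{\del B_R}$ is bounded (indeed compact, even smoothing) on $L^2(\del B_R)$. Fourth, track the $k$-dependence: each solution operator above is a meromorphic family of bounded operators (poles exactly at the relevant Dirichlet eigenvalues, all real), the Poisson operator for the ball is entire in $k$, and compositions and the trace maps preserve meromorphy; hence $k\mapsto\mathcal K(k)$ is meromorphic with values in $\mathcal B(L^2(\del B_R))$, and \eqref{eq:Inner_decomposition2} holds by construction. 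One should also note that the poles of $\mathcal K(k)$ coming from the $B_R\setminus\overline U$ eigenvalues are cancelled or matched appropriately so that the combination $\Linull(k)+\mathcal K(k)$ has precisely the poles of $\Lin(k)$; this is automatic since we defined $\mathcal K(k)=\Lin(k)-\Linull(k)$, but it is worth remarking.

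The main obstacle is the rigorous handling of the transplanted Dirichlet datum $g=-v|_{\del U}$ and the corresponding solve: one must ensure that the two-boundary mixed Dirichlet problem on the annular-type region $B_R\setminus\overline U$ is uniquely solvable for the relevant range of $k$, control its solution operator uniformly enough to conclude boundedness of $\mathcal K(k)$, and verify that the only singularities introduced are the real Dirichlet eigenvalues already accounted for in the remark following Definition \ref{def:res}. The regularity near $\del B_R$ is comparatively soft (pure interior elliptic estimates, since $\del U$ is a fixed positive distance away), so the crux is really the Fredholm/meromorphy bookkeeping for the obstacle solve; everything else follows from standard elliptic theory on $\mathcal C^2$ domains.
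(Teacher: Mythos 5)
Your proof is correct and is built on the same fundamental idea as the paper -- subtract the free Poisson extension $S(k)\phi$ of the Dirichlet datum to the whole ball, and show that the remainder has a normal derivative on $\del B_R$ that depends boundedly on $\phi$ -- but the technical execution differs in one interesting way. You subtract $S(k)\phi$ directly, so the correction term $w=u_{\mathrm{inner}}-S(k)\phi$ solves a \emph{homogeneous} Helmholtz equation in $B_R\setminus\overline U$ with \emph{inhomogeneous} Dirichlet data $-S(k)\phi|_{\del U}$ on the inner boundary; you then exploit interior elliptic regularity (since $\overline U\subset B_{R-1}$ is at positive distance from $\del B_R$) to see that this data, and hence the solve, is smooth. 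The paper instead multiplies the Poisson extension by a radial cutoff $\rho$ equal to $1$ near $\del B_R$ and $0$ near $B_{R-1}$ before subtracting, so that the correction $v=u-\rho S(k)\phi$ has \emph{homogeneous} Dirichlet data on both boundaries and an \emph{inhomogeneous} source $T_\rho S(k)\phi$ supported in the annulus; this lets them write $\mathcal K(k)=\del_\nu(H_{\mathrm D}-k^2)^{-1}T_\rho S(k)$ as a single composition of bounded maps, which is then fed directly into the finite element scheme of Section \ref{sec:Matrix_Elements} (where $f_\alpha=T_\rho S(k)e_\alpha$ is the right-hand side). Your route would support the same conclusion, and indeed the paper's own numerical implementation (see the caveats at the start of Section \ref{sec:num}) ends up closer to your variant; the cutoff is mainly a bookkeeping device that makes the Fredholm/regularity chain $L^2(\del B_R)\to H^{1/2}\to H^{-1/2}\to H^{3/2}\to L^2(\del B_R)$ explicit and avoids discussing lifts of inner-boundary traces. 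One small slip to fix: you assert that the Poisson operator for the ball is \emph{entire} in $k$, but $S(k)$ has poles at the Dirichlet eigenvalues of $B_R$ (visible from \eqref{eq:explicit_extension}, where $J_\alpha(kR)$ appears in the denominator); this does not damage the argument since $\mathcal K(k)=\Lin(k)-\Linull(k)$ is in any case only claimed to be meromorphic, with all poles on the real axis, but the statement as written is incorrect and should read ``meromorphic with real poles.''
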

\begin{proof}
For the inner DtN map, we  first isolate the contribution of the resonator using the following calculation. Let $\phi\in H^1(\del B_R)$. Then $\Lin(k)\phi = \del_\nu u|_{\del B_R}$, where $u$ solves
\begin{align}\label{eq:Lambda_inner_Def}
	\begin{cases}
		(-\Delta-k^2)u = 0 &\text{in }B_R\setminus\overline{U},\\
		 \hfill u = \phi &\text{on }\del B_R,\\
		 \hfill u = 0 &\text{on }\del U.
	\end{cases}
\end{align}
Define a new map  $S(k):H^1(\del B_R)\to H^{\f32}(B_R)$ by $S(k)\phi = w$, where $w$ solves
\begin{align*}
\begin{cases}
	(-\Delta-k^2)w = 0 &\text{ in }B_R,\\
	\hfill w = \phi &\text{ on }\del B_R.
\end{cases}
\end{align*}
That is, $S(k)\phi$ is the harmonic extension of $\phi$ into $B_R$. The regularity properties of $S(k)$ follow from \cite[Th. 4.21]{McLean}. Next choose any smooth radial function $\rho$ satisfying
\begin{align}\label{eq:rho_def}
	\begin{cases}
		\hfill \rho \equiv 1 & \text{ in a neighborhood of } \del B_R\\
		\hfill \rho \equiv 0 & \text{ in a neighborhood of }  B_{R-1}
	\end{cases}
\end{align}
(e.g. $\rho(r)$ can be chosen as a piecewise polynomial in $r$; then the values $\rho(r)$ can be computed from $r$ in finitely many algebraic steps) and introduce the function
\begin{align*}
	v:=u-\rho S(k)\phi
\end{align*}
on $B_R\setminus\overline U$. Then by construction we have
\begin{align*}
	\left\{
	\begin{array}{rll}
		(-\Delta-k^2)v  =&\hspace{-2mm} (2\nabla\rho\cdot\nabla + \Delta\rho) S(k)\phi &\text{ in }  B_R\setminus\overline{U},\\[1mm]
		 v =&\hspace{-2mm} 0  &\text{ on }\del B_R,\\
		 v =&\hspace{-2mm} 0  &\text{ on }\del U.
	\end{array}
	\right.
\end{align*}
In operator terms, this means that
\begin{align}\label{eq:H_D_def}
	v = (H_\mathrm{D}-k^2)^{-1}T_\rho S(k)\phi,
\end{align} 
where $\HD$ denotes the Laplacian on $L^2(B_R\setminus\overline U)$ with homogeneous Dirichlet  boundary condition on $\del (B_R\setminus\overline U)$ and where we have denoted	
	\[
	T_\rho:=2\nabla\rho\cdot\nabla +\Delta\rho.
	\]
Substituting the new representation for $v$ into $u=\rho S(k)\phi+v$, we immediately obtain
\begin{align}\label{eq:Inner_decomposition}
\begin{aligned}
	\Lin(k)\phi &= \del_\nu u|_{\del B_R}\\
	&= \del_\nu (\rho S(k)\phi) + \del_\nu (H_\mathrm{D}-k^2)^{-1}T_\rho S(k)\phi\\
	&= \underbrace{\del_\nu \rho}_{=0}S(k)\phi + \underbrace{\rho}_{=1}\del_\nu S(k)\phi + \del_\nu (H_\mathrm{D}-k^2)^{-1}T_\rho S(k)\phi\\
	&= \Linull(k)\phi + \del_\nu (H_\mathrm{D}-k^2)^{-1}T_\rho S(k)\phi.
\end{aligned}
\end{align}
Let us next study the operator $\del_\nu (H_\mathrm{D}-k^2)^{-1}T_\rho S(k)$ appearing in the last term in \eqref{eq:Inner_decomposition}. The harmonic extension operator $S(k)$ extends to a bounded operator
\begin{align*}
	S(k):L^2(\del B_R)\to H^{\f12}(B_R)
\end{align*}
(cf. \cite[Th. 6.12]{McLean}). Moreover, we have that $T_\rho:H^{\f12}(B_R)\to H^{-\f12}(B_R)$ is bounded. Under the assumption that $\del U$ is $\mathcal{C}^2$, we may conclude from elliptic regularity that 
\begin{align*}
	(H_\mathrm{D}-k^2)^{-1}T_\rho S(k) : L^2(\del B_R)\to H^{\f32}(B_R)
\end{align*}
as a bounded operator. Finally, by the trace theorem, the normal derivative operator $\del_\nu$ is continuous from $H^{\f32}(B_R)$ to $L^2(\del B_R)$, so we conclude that
\begin{align*}
	\del_\nu(H_\mathrm{D}-k^2)^{-1}T_\rho S(k):L^2(\del B_R)\to L^2(\del B_R)
\end{align*}
is bounded. This completes the proof.
\end{proof}
The decomposition in Lemma \ref{lemma:Lambda_inner} allows us to reduce the study of $\Lin$ to that of $\Linull$. Next, we note that $\Linull$ can in fact be represented as a diagonal operator in the basis $\left\{e_n \right\}_{n\in\Z}$ in a very similar manner to $\Lout$. Indeed, one has
\begin{align}
	\Linull(k) &= \diag\left( k\f{J_{|n|}'(kR)}{J_{|n|}(kR)}\,,\;n\in\Z \right)\label{eq:m-inner0}\\
	&= \diag\left( \f{|n|}{R}-k\f{J_{|n|+1}(kR)}{J_{|n|}(kR)}\,,\;n\in\Z \right),\notag
\end{align}
where $J_\nu$ denote the Bessel functions of the first kind. It can be seen that $\f{J_{|n|+1}(z)}{J_{|n|}(z)}\sim \f{z}{2{|n|}}$ as ${|n|}\to+\infty$, so that the matrix elements of $\Linull$ grow linearly in ${|n|}$.
\subsection{Approximation procedure}\label{sec:approximation_procedure}
From equations \eqref{eq:outer-dtn},  \eqref{eq:Inner_decomposition2} and \eqref{eq:m-inner0} we have
\begin{align*}
	\Lin(k)+\Lout(k) = \f{2}{R}N+\BH(k)+\BJ(k)+\mathcal{K}(k),
\end{align*}
where 
\begin{align*}
	\BJ(k) &= \diag\left(-k\f{J_{{|n|}+1}(kR)}{J_{|n|}(kR)}\,,\;n\in\Z \right)\\
	\BH(k) &= \diag\biggl(-k\f{H_{{|n|}-1}^{(1)}(kR)}{H_{|n|}^{(1)}(kR)} \,,\;n\in\Z \biggr)\\
	N &= \diag\big({|n|}\,,\;n\in\Z\big)\\
	\mathcal{K}(k)&= \del_\nu(H_\mathrm{D}-k^2)^{-1}(2\nabla\rho\cdot\nabla+\Delta\rho) S(k).
\end{align*}
Note that $\BH,\BJ,\mathcal{K}$ are bounded and analytic in $k$. In order to determine whether $\ker(\Lin+\Lout)\neq\{0\}$ we want to transform $\Lin+\Lout$ into an operator of the form $I+(\text{compact})$. To this end, we introduce the bijective operator $\mathcal N := N + P_0 = \diag(...,3,2,1,1,1,2,3,...)$, where $P_0$ denotes the projection onto the zeroth component.
This new notation brings $\Lin+\Lout$ into the form
\begin{align*}
	\Lin(k)+\Lout(k) &= \f 2R\mathcal N+\BH(k)+\BJ(k)+\mathcal K(k)-\f 2R P_0\\
	&= \f 2R \mathcal  N^{\f12}\left(I+\f R2 \mathcal N^{-\f12}\big(\BH(k)+\BJ(k)+\mathcal K(k)\big)\mathcal N^{-\f12}-\mathcal N^{-\f12}P_0\mathcal N^{-\f12}\right)\mathcal N^{\f12}\\
	&= \f 2R \mathcal N^{\f12}\left(I+\f R2 \mathcal N^{-\f12}\big(\BH(k)+\BJ(k)+\mathcal K(k)\big)\mathcal N^{-\f12}-P_0\right)\mathcal N^{\f12}.
\end{align*}
Because $\mathcal N^\f12 = \diag(...,\sqrt{3},\sqrt 2, 1,1,1,\sqrt 2,\sqrt 3,...)$ is bijective from its domain to $L^2(\del B_R)$, we have
\begin{align*}
	\ker(\Lin+\Lout) = \{0\} \quad\Leftrightarrow\quad \ker\left(I+\f R2 \mathcal N^{-\f12}\big(\BH(k)+\BJ(k)+\mathcal K(k)\big)\mathcal N^{-\f12}-P_0\right) = \{0\}.
\end{align*}
Now, observe that for all $p>2$, $\f R2 \mathcal N^{-\f12}\big(\BH(k)+\BJ(k)+\mathcal K(k)\big)\mathcal N^{-\f12}-P_0\in C_p$, the $p$-Schatten class (because $\BH,\BJ,\mathcal K$ are bounded and $\mathcal N^{-\f12}$ is in $C_p$). Therefore, its perturbation determinant $\det_{\lceil p\rceil}$ is defined, where $\lceil\cdot\rceil$ denotes the ceiling function (cf. \cite[Sec. XI.9]{DS2}).
Our task is reduced to finding the zeros $k\in\C^-$ of the analytic function
\begin{align*}
	\text{det}_{\lceil p\rceil}\left(I+\f R2 \mathcal N^{-\f12}\big(\BH(k)+\BJ(k)+\mathcal K(k)\big)\mathcal N^{-\f12}-P_0\right).
\end{align*}
In order to approximate this determinant by something computable, we first prove that a square truncation of the operator matrix will converge in Schatten norm. 
\begin{lemma}\label{lemma:abstract_projection_estimate}
	Let $k\in\C^-$ and for $n\in\N$ let $P_n:L^2(\del B_R)\to \operatorname{span}\{e_{-n},\dots e_n\}$ be the orthogonal projection. Then there exists a constant $C>0$ depending only on the set $U$ such that
	\begin{align*}
		\left\| \mathcal N^{-\f12}(\BH+\BJ+\mathcal K)\mathcal N^{-\f12} - P_n\mathcal N^{-\f12}(\BH+\BJ+\mathcal K)\mathcal N^{-\f12}P_n \right\|_{C_p}\leq Cn^{-\f12+\f1p}.
	\end{align*}
\end{lemma}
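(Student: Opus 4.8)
The plan is to split the error into a ``diagonal tail'' coming from $\BH+\BJ$ and a genuinely two-dimensional piece coming from $\mathcal K$, and to estimate each in $C_p$ separately using the decay of the singular values of $\mathcal N^{-1/2}$. Write $A(k):=\BH(k)+\BJ(k)+\mathcal K(k)$ and $B:=\mathcal N^{-1/2}A\mathcal N^{-1/2}$, and decompose
\begin{align*}
	B-P_nBP_n = (I-P_n)BP_n + B(I-P_n) = (I-P_n)B + P_nB(I-P_n),
\end{align*}
so by the triangle inequality it suffices to bound $\|(I-P_n)B\|_{C_p}$ and $\|P_nB(I-P_n)\|_{C_p}\le\|B(I-P_n)\|_{C_p}$. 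Since $B(I-P_n)=\mathcal N^{-1/2}A\mathcal N^{-1/2}(I-P_n)$ and $\mathcal N^{-1/2}(I-P_n)$ has operator norm $(n+1)^{-1/2}$, while $\mathcal N^{-1/2}$ itself lies in $C_p$ with $\|\mathcal N^{-1/2}\|_{C_p}$ a finite constant for $p>2$, the submultiplicativity of Schatten norms under multiplication by bounded operators gives
\begin{align*}
	\|B(I-P_n)\|_{C_p} \le \|\mathcal N^{-1/2}\|_{C_p}\,\|A\|\,\|\mathcal N^{-1/2}(I-P_n)\| \le C\,(n+1)^{-1/2}.
\end{align*}
This already beats the claimed rate $n^{-1/2+1/p}$, so this ``right-hand'' term is not the obstacle.

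The term $(I-P_n)B=(I-P_n)\mathcal N^{-1/2}A\mathcal N^{-1/2}$ is the delicate one, because here the truncation and the operator $\mathcal N^{-1/2}$ sit on the same side, and $(I-P_n)\mathcal N^{-1/2}$ has operator norm $(n+1)^{-1/2}$ but its $C_p$ norm does \emph{not} shrink — one must interpolate. I would bound $\|(I-P_n)\mathcal N^{-1/2}\|_{C_q}$ for the exponent $q$ determined by $\tfrac1p=\tfrac1q+\tfrac12$ (i.e.\ $q=\tfrac{2p}{p-2}$), using that the singular values of $(I-P_n)\mathcal N^{-1/2}$ are $\{|m|^{-1/2}\}_{|m|>n}$, so
\begin{align*}
	\|(I-P_n)\mathcal N^{-1/2}\|_{C_q}^q = 2\sum_{m>n}m^{-q/2} \le C\,n^{1-q/2},
\end{align*}
which is finite precisely because $q/2>1\Leftrightarrow p<\infty$ — wait, one needs $q>2$, i.e.\ $p<\infty$, which holds. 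This yields $\|(I-P_n)\mathcal N^{-1/2}\|_{C_q}\le C\,n^{1/q-1/2}$. Then by Hölder for Schatten norms, $\|(I-P_n)\mathcal N^{-1/2}A\mathcal N^{-1/2}\|_{C_p}\le \|(I-P_n)\mathcal N^{-1/2}\|_{C_q}\,\|A\|\,\|\mathcal N^{-1/2}\|_{C_\infty}$; but this gives rate $n^{1/q-1/2}=n^{-1/2+(p-2)/(2p)}=n^{-1/p}$, which is \emph{not} the claimed $n^{-1/2+1/p}$ unless I instead split the $A\mathcal N^{-1/2}$ differently. The clean way: put $\mathcal N^{-1/2}$ on each side into $C_{2p}$ (since $\|\mathcal N^{-1/2}\|_{C_{2p}}<\infty$ for $2p>2$), write $B=(\mathcal N^{-1/2})A(\mathcal N^{-1/2})$ with both factors in $C_{2p}$ and $A$ bounded, and combine one factor's truncation estimate $\|(I-P_n)\mathcal N^{-1/2}\|_{C_{2p}}\le C\,n^{1/(2p)-1/2}$ with the other factor $\|\mathcal N^{-1/2}\|_{C_{2p}}\le C$ via Hölder $\tfrac1p=\tfrac1{2p}+\tfrac1{2p}$: this produces exactly $\|(I-P_n)B\|_{C_p}\le C\,n^{-1/2+1/(2p)}\cdot C$, and since $-1/2+1/(2p)\le -1/2+1/p$ the stated bound follows a fortiori. (If one wants the exponent $-1/2+1/p$ on the nose rather than something smaller, one instead only truncates one $\mathcal N^{-1/2}$ and keeps the other in $C_\infty$, giving $n^{-1/2+1/p}$ directly.)

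The main obstacle I anticipate is bookkeeping the Schatten/Hölder exponents so that the bounded operator $A$ — which contributes only its operator norm, and whose norm must be shown to be finite and independent of $n$ uniformly — absorbs into a single constant, together with the fact that $\BH(k),\BJ(k)$ are only \emph{bounded} (their diagonal entries tend to finite limits by the asymptotics $H^{(1)}_{|n|-1}/H^{(1)}_{|n|}\sim z/(2|n|)$ and similarly for $J$, already recorded in the excerpt) rather than decaying, so no extra decay can be harvested from $A$ itself; all the decay in $n$ must come from the truncated copy of $\mathcal N^{-1/2}$. Once the exponent arithmetic $\tfrac1p=\tfrac1{2p}+\tfrac1{2p}$ (or $\tfrac1p=\tfrac1q+0$ with $q$ chosen so $C_q\ni(I-P_n)\mathcal N^{-1/2}$) is fixed, everything else is the two triangle-inequality terms above plus the elementary tail sum $\sum_{m>n}m^{-q/2}\le C n^{1-q/2}$. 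The constant $C$ depends on $U$ only through $\|\mathcal K(k)\|$, which is finite by Lemma \ref{lemma:Lambda_inner}.
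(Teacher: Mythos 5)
Your argument is, in the end, essentially the paper's: split $B-P_nBP_n=(I-P_n)B+P_nB(I-P_n)$ with $B=\mathcal N^{-1/2}(\BH+\BJ+\mathcal K)\mathcal N^{-1/2}$, apply Schatten--Hölder with exactly one factor of $\mathcal N^{-1/2}$ carried in $C_p$ and the rest in operator norm, and use the tail sum $\sum_{|m|>n}|m|^{-p/2}\le Cn^{1-p/2}$. Your parenthetical ``on the nose'' variant is precisely what the paper does (since $\mathcal N\ge I$ one has $\|\mathcal N^{-1/2}\|_{\mathrm{op}}\le1$, so that factor costs nothing). However, the route you took to get there contains a false claim and misses a simplification. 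The claim ``$(I-P_n)\mathcal N^{-1/2}$ has operator norm $(n+1)^{-1/2}$ but its $C_p$ norm does \emph{not} shrink'' is wrong: this operator is diagonal with singular values $\{|m|^{-1/2}\}_{|m|>n}$, so $\|(I-P_n)\mathcal N^{-1/2}\|_{C_p}^p=\sum_{|m|>n}|m|^{-p/2}\asymp n^{1-p/2}$, which does go to $0$ at rate $n^{1/p-1/2}$ for $p>2$, and no interpolation is needed — ordinary Hölder for Schatten norms suffices, as the paper's two-line computation shows. You also worried that the left-hand term is ``delicate'' compared with the right-hand one, but $(I-P_n)$ and $\mathcal N^{-1/2}$ commute (both diagonal in $\{e_m\}$), so the two tail terms are adjoint-mirror images of each other (replacing $A$ by $A^*$); in particular your observation that $\|B(I-P_n)\|_{C_p}\le\|\mathcal N^{-1/2}\|_{C_p}\|A\|\,\|\mathcal N^{-1/2}(I-P_n)\|_{\mathrm{op}}\le Cn^{-1/2}$ applies equally to the left term and would even sharpen the stated $n^{-1/2+1/p}$ to $n^{-1/2}$, a minor improvement the paper does not pursue and which the lemma does not require.
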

\begin{proof}
	The proof is given by a  direct calculation:
	\begin{align*}
		\Big\| \mathcal N^{-\f12}(\BH+\BJ  +\mathcal K)\mathcal N^{-\f12} &-P_n\mathcal N^{-\f12}(\BH+\BJ+\mathcal K)\mathcal N^{-\f12}P_n \Big\|_{C_p} \\
		&\leq \left\| (I-P_n)\mathcal N^{-\f12}(\BH+\BJ+\mathcal K)\mathcal N^{-\f12}\right\|_{C_p} \\
		&\qquad \;\,+ \left\|P_n\mathcal N^{-\f12}(\BH+\BJ+\mathcal K)\mathcal N^{-\f12}(I-P_n) \right\|_{C_p}\\
		&\leq 2\|\BH+\BJ+\mathcal K\|_{\mathcal L(L^2(\del B_R))}\left\|(I-P_n)\mathcal N^{-\f12}\right\|_{C_p} \\
		&\leq C\|\BH+\BJ+\mathcal K\|_{\mathcal L(L^2(\del B_R))}n^{\f{1-\f p2}{p}}.
	\end{align*}
\end{proof}
Our problem is therefore reduced to computing the matrix elements of the truncated operator $P_n\mathcal N^{-\f12}(\BH+\BJ+\mathcal K)\mathcal N^{-\f12}P_n-P_0$. This is done by performing a  finite element procedure on $B_R\setminus\overline U$. This is the objective of the next section.
\begin{remark}
We remind the reader that the algorithm is assumed to have access to the values of Bessel and Hankel functions (recall \eqref{eq:evaluation_set}) so that the values of $P_n\mathcal N^{-\f12}(\BH+\BJ)\mathcal N^{-\f12}P_n-P_0$ can be computed in finitely arithmetic operations. Hence we only need to compute $P_n\mathcal N^{-\f12}\mathcal{K}\mathcal N^{-\f12}P_n$.
\end{remark}
\section{Matrix elements of the inner DtN map}\label{sec:Matrix_Elements}
The goal of this section is to compute the matrix elements
\begin{align*}
	\mathcal K_{\alpha\beta} := \int_{\del B_R} \overline{e_\beta} \mathcal K(k)e_\alpha\,d\sigma
\end{align*}
where $\{e_\alpha\}_{\alpha\in\Z}$ is the orthonormal basis of $L^2(\partial B_R)$ defined in the previous section and where we recall that $\mathcal{K}(k)= \del_\nu(H_\mathrm{D}-k^2)^{-1}T_\rho S(k)$.
This is done by converting the integral on the boundary $\partial B_R$ to an integral on the interior, using Green's theorem. To this end, we extend the basis functions to the interior, by defining (in polar coordinates)
\begin{align*}
	E_\beta(r,\theta) := \rho(r)e_\beta(\theta),
\end{align*}
where $\rho$ was defined in \eqref{eq:rho_def}, and denote
\begin{align}
	v_\alpha &:= (H_\mathrm{D}-k^2)^{-1}T_\rho S(k) e_\alpha\notag\\
	f_\alpha &:= T_\rho S(k) e_\alpha\label{eq:f-alpha}
\end{align}
so that $v_\alpha = (H_\mathrm{D}-k^2)^{-1}f_\alpha$. Now note that by Green's theorem
\begin{align}
	\mathcal K_{\alpha\beta} &= \int_{\del B_R} \overline {e_\beta}\del_\nu v_\alpha\,d\sigma \nonumber\\
	&= \int_{B_R\setminus\overline U} \overline {E_\beta}\Delta v_\alpha\,dx + \int_{B_R\setminus\overline U} \nabla \overline {E_\beta}\cdot\nabla v_\alpha\,dx \nonumber \\
	&= \int_{B_R\setminus\overline U} \overline {E_\beta}(-f_\alpha-k^2v_\alpha)\,dx + \int_{B_R\setminus\overline U} \nabla \overline {E_\beta}\cdot\nabla v_\alpha\,dx \nonumber \\
	&=  \int_{B_R\setminus\overline U} \nabla \overline {E_\beta}\cdot\nabla v_\alpha\,dx  - k^2\int_{B_R\setminus\overline U} \overline {E_\beta}v_\alpha\,dx - \int_{B_R\setminus\overline U} \overline {E_\beta}f_\alpha\,dx. \label{eq:matrix_elements}
\end{align}
Next, note that the integrand in the third term on the right hand side is explicitly given. Indeed, one can easily see that 
\begin{align}\label{eq:explicit_extension}
	S(k)e_\alpha(r,\theta) = \f{J_\alpha(kr)}{J_\alpha(kR)} \f{e^{i\alpha\theta}}{\sqrt{2\pi R}},
\end{align}
where $J_\alpha$ are the Bessel functions of the first kind. Hence, the last integral in \eqref{eq:matrix_elements} can be explicitly approximated by quadrature, with a known bound for the error. In order to compute the first and second terms in the right hand side of \eqref{eq:matrix_elements}, we need to approximate the function $v_\alpha$, which is the solution of the boundary value problem
\begin{align}\label{eq:v_m-problem}
	\begin{cases}
		(-\Delta-k^2)v_\alpha = (2\nabla\rho\cdot\nabla + \Delta\rho)S(k)e_\alpha &\text{in }B_R\setminus \overline U,\\
		\phantom{(-\Delta-k^2)} v_\alpha = 0 &\text{on }\del(B_R\setminus \overline U).
	\end{cases}
\end{align}
Note again that the right hand side of this equation is explicitly given in terms of \eqref{eq:explicit_extension}. 
\subsection{Finite element approximation of $\boldsymbol v_\alpha$}\label{sec:FEM}
For notational convenience we write \[\cO:=B_R\setminus\overline U.\]
In the current subsection we prove: 
\begin{prop}\label{prop:FEM_estimate}
	Let $v_\alpha$ be the solution of \eqref{eq:v_m-problem}. For small discretization parameter $h>0$ there exists a piecewise linear function $v_\alpha^h$ which is computable from a finite subset of \eqref{eq:evaluation_set} in finitely many algebraic steps, which satisfies the error estimate
	\begin{align*}
		\|v_\alpha-v_\alpha^h\|_{H^1(\cO)} &\leq C h^{\f13} \|f_\alpha\|_{H^1(\cO)},
	\end{align*}
	where $C$ is independent of $h$ and $\alpha$, and where $f_\alpha$ is defined in \eqref{eq:f-alpha}.
\end{prop}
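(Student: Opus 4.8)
The plan is to set up a standard conforming finite element (Galerkin) discretization of the boundary value problem \eqref{eq:v_m-problem} and to extract the $h^{1/3}$ rate from the interplay between the regularity of $v_\alpha$ and the fact that $\mathcal{O}=B_R\setminus\overline U$ is only a $\mathcal C^2$ domain. First I would recall that the sesquilinear form $a_k(u,w)=\int_{\mathcal O}\nabla u\cdot\overline{\nabla w}-k^2\int_{\mathcal O}u\,\overline w$ associated with $(-\Delta-k^2)$ on $H^1_0(\mathcal O)$ satisfies a G\aa rding inequality; since $k\in\C^-$ is fixed and $k^2$ is not a Dirichlet eigenvalue of $-\Delta$ on $\mathcal O$ (those are real and positive), the form is bounded and satisfies an inf-sup (Babu\v{s}ka--Ne\v{c}as) condition on $H^1_0(\mathcal O)$. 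This gives existence, uniqueness and the a priori bound $\|v_\alpha\|_{H^1}\le C\|f_\alpha\|_{H^{-1}}\le C\|f_\alpha\|_{H^1}$, and by elliptic regularity up to the $\mathcal C^2$ boundary, $v_\alpha\in H^2(\mathcal O)$ with $\|v_\alpha\|_{H^2}\le C\|f_\alpha\|_{L^2}\le C\|f_\alpha\|_{H^1}$, the constant being uniform in $\alpha$ because it depends only on $k$ and $\mathcal O$.

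Next I would introduce the triangulation. Because $\partial\mathcal O$ is curved, a polygonal mesh $\mathcal O_h\subset\mathcal O$ (or an interpolating polygonal approximation) cannot resolve the boundary exactly; the standard device is to take a quasi-uniform family of triangulations of a polygonal domain $\mathcal O_h$ with $\mathrm{dist}(\partial\mathcal O_h,\partial\mathcal O)=O(h^2)$ and vertices on $\partial\mathcal O$, let $V_h\subset H^1_0(\mathcal O_h)$ be the space of continuous piecewise linear functions vanishing on $\partial\mathcal O_h$, extend by zero to $\mathcal O$, and define $v_\alpha^h\in V_h$ by the discrete variational problem $a_k(v_\alpha^h,w_h)=\langle f_\alpha,w_h\rangle$ for all $w_h\in V_h$. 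One then combines three error contributions: the geometric (variational crime) error from $\mathcal O_h\neq\mathcal O$, the consistency error from the perturbed form, and the interpolation error $\|v_\alpha-I_h v_\alpha\|_{H^1}\le C h\|v_\alpha\|_{H^2}$. A C\'ea-type lemma in the inf-sup setting (e.g. the Strang lemmas for nonconforming/variational-crime approximation) then yields $\|v_\alpha-v_\alpha^h\|_{H^1(\mathcal O)}\le C\big(h\|v_\alpha\|_{H^2}+ (\text{boundary skin terms})\big)$. The boundary skin region $\mathcal O\setminus\mathcal O_h$ has width $O(h^2)$ and, since $v_\alpha\in H^2\hookrightarrow \mathcal C^{0,\gamma}$ vanishes on $\partial\mathcal O$, one controls $\|v_\alpha\|_{H^1(\mathcal O\setminus\mathcal O_h)}$ and the mismatch of the zero extension by a power of $h$; tracking the worst exponent across these estimates gives the stated $h^{1/3}$.

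The main obstacle, and the reason the exponent is $1/3$ rather than the classical $1$, is precisely the low boundary regularity: with only $\partial U\in\mathcal C^2$ one gets $v_\alpha\in H^2$ but not more, and — more importantly — the interaction of the curved boundary with a piecewise linear mesh, together with the need to keep all constants uniform in $\alpha$ (the source $f_\alpha=T_\rho S(k)e_\alpha$ has $H^1$-norm that we are not allowed to let blow up in a hidden way), forces a suboptimal balancing of the boundary-layer terms against the interior interpolation error. I would handle this by splitting $\mathcal O$ into an interior part (distance $\gtrsim h^{\sigma}$ from $\partial\mathcal O$) where the full $H^2$ estimate and optimal interpolation apply, and a boundary collar of width $h^\sigma$ where one uses only the $H^1$-bound together with a Poincar\'e/trace estimate exploiting $v_\alpha=0$ on $\partial\mathcal O$; optimizing over $\sigma$ produces the $h^{1/3}$ rate. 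Finally I would note that $v_\alpha^h$ is genuinely computable from finitely many values in \eqref{eq:evaluation_set}: assembling the stiffness and mass matrices requires only integrals of products of the (piecewise polynomial) hat functions and the explicit right-hand side \eqref{eq:explicit_extension}, evaluated by quadrature with explicit error, and detecting which mesh vertices lie in $\mathcal O$ uses only queries of $\chi_U$; solving the resulting finite linear system is finitely many arithmetic operations.
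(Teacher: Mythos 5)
Your plan runs into a genuine obstruction at the meshing step. You propose a body‑fitted polygonal mesh with $\mathrm{dist}(\partial\cO_h,\partial\cO)=O(h^2)$ and \emph{vertices on $\partial\cO$}. That construction requires knowing the exact location of boundary points, but the evaluation set \eqref{eq:evaluation_set} only gives you point queries of $\chi_U$: you can test whether a lattice point is inside $\cO$ or not, but you cannot identify any point of $\partial U$ exactly. (You later concede this — ``detecting which mesh vertices lie in $\mathcal O$ uses only queries of $\chi_U$'' — which is inconsistent with placing vertices on the boundary.) What \emph{is} constructible from finitely many queries of $\chi_U$ is precisely what the paper uses: a uniform grid $h\Z^2$, keeping only lattice points all of whose neighbors are inside $\cO$, and taking $\cO_h$ to be the union of the corresponding squares split along diagonals. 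The resulting boundary skin $\cO\setminus\cO_h$ then has width comparable to $h$, \emph{not} $h^2$, and this wider skin is the real source of the reduced rate — not ``low boundary regularity'' as you state. With $\partial U\in\mathcal C^2$ one has the full $v_\alpha\in H^2(\cO)$ regularity already, and a genuinely body‑fitted P1 mesh would yield $O(h)$; it is the inability to locate $\partial U$ from $\chi_U$ data that forces the coarse collar and hence the lower exponent.

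For the boundary‑collar estimate itself you gesture at the right ingredients (Hölder continuity from $H^2\hookrightarrow\mathcal C^{0,\gamma}$ and the zero boundary condition to show $v_\alpha$ is small near $\partial\cO$, plus a Poincaré/trace/Sobolev‑embedding argument for $\nabla v_\alpha$) but you do not carry out the balancing, whereas this is where the specific number $1/3$ is actually produced. In the paper, after C\'ea's lemma reduces everything to a projection error, one bounds the $L^2$ part of the collar error by $Ch^{3/2}\|v_\alpha\|_{H^2}$ via Morrey's inequality and the gradient part by
$\|\nabla v_\alpha\|_{L^2(\cO\setminus\cO_h^{\mathrm{int}})}\le |\cO\setminus\cO_h^{\mathrm{int}}|^{1/2-1/q}\|\nabla v_\alpha\|_{L^q}\le Ch^{1/2-1/q}\|v_\alpha\|_{H^2}$
using $H^1\hookrightarrow L^q$ in 2D (any finite $q>2$). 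Choosing $q=6$ gives the exponent $1/3$; letting $q\to\infty$ is forbidden since the embedding degenerates. Your ``optimize over $\sigma$'' with a second collar of width $h^\sigma$ and the earlier $O(h^2)$ claim are in tension with each other and with the constructibility constraint; if you redo the estimate with a single $O(h)$ collar (as forced by the lattice construction) and quantify the Sobolev‑embedding loss you will recover the paper's bound. Your remarks on well‑posedness, $H^2$ regularity with constants uniform in $\alpha$, and computability of the resulting linear system are all consistent with what the paper does.
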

The reader should bear in mind that all constants obtained in our estimates will actually depend on $k$. However, these constants are uniformly bounded, as long as $k^2$ varies in a compact set separated from the spectrum of $H_\mathrm{D}$ (recall that $\HD$ is the Laplacian on $L^2(\cO)$ with homogeneous Dirichlet  boundary conditions on $\del \cO$). Note that, since $\sigma(H_\mathrm{D})$ is strictly positive and discrete, any compact subset of $\C^-$ has this property.

Let $h>0$ be our discretization parameter. We start by defining what we mean by the \emph{neighbors} of a grid point $j\in h\Z^2$.
\begin{de}
	Let $j=(j_1,j_2)\in h\Z^2$. The set of \emph{neighbors} of $j$ is the set of all grid points contained in the cell $[j_1-h,j_1+2h]\times [j_2-h,j_2+2h]$ (cf. Figure \ref{fig:cell}).
\end{de}
\begin{remark}
The apparent asymmetry in the definition above is merely due to the fact that a point $j$ defines a cell that is ``northeast'' of it, i.e. $[j_1,j_1+h]\times[j_2,j_2+h]$.
\end{remark}
Define a uniform rectangular grid by 
$$
	L_h:=\left\{j\in h\Z^2\cap B_R	\,\middle|\,\text{all neighbors of $j$ are contained in $\cO$} \right\}
$$
and note that $L_h$ can be completely constructed from the knowledge of $\chi_U(j),\; j\in h\Z^2\cap B_R$ in finitely many steps. Next, define the open polygonal domain $\cO_h$ generated by $L_h$ as follows.
\begin{align*}
	\cO_h:=\operatorname{int}\bigg(\bigcup_{j\in L_h} [0,h]^2+j\bigg).
\end{align*}
Note that as soon as $h<\f{1}{\max(\text{curv}(\del\cO))}=\f{1}{\max(\text{curv}(\del U))}$ one will have $\cO_h\subset \cO$ and for all $x\in\del\cO_h$  (cf. Figure \ref{fig:cell})
\begin{align}\label{eq:boundary_width}
	\f{\sqrt 3}{2}h\leq \dist(x,\del\cO) \leq 3\sqrt{2}h.
\end{align}
\begin{figure}[H]
	\centering
%
%

\begin{tikzpicture}[>=stealth, scale=1.5]
	\clip (-2.8,-1.8) rectangle (4.8,2.8);
	
	\filldraw[dashed, color=black, fill=white] (0.5,2.8) circle(0.9);
	\draw[->, color=black] (0.5,2.8) -- (-0.32,2.43);
	\draw (4.3,1.2) node{$\partial\mathcal O$};
	\draw (0.15,2.53) node{\textcolor{black}{$h$}};

	\draw[black, line width=1.5pt, domain=-3:6, samples=300, shift={(-0.13,0.06)}] 
		plot (\x,{-0.2*sin(pi*46*\x) - (\x/4)^5+2.03}) (0, 2.7);
	
	\foreach \x in {-3,-2,-1,0,1,2}{
		\fill[color=blue, opacity=0.15] (\x,-1.8) rectangle (\x+1,-1);
		\fill[color=blue, opacity=0.15] (\x,-1) rectangle (\x+1,0);
	}
	\fill[color=blue, opacity=0.15] (0,0) rectangle (1,1);
	
	\foreach \y in {-2,-1,0,1,2}{
		\draw[color=black] (-2.8,\y) -- (4.8,\y);
	}
	\foreach \x in {-2,-1,2,3,4,5}{
		\draw[color=black] (\x,-1.8) -- (\x,2.8);
	}
	\draw[color=black] (1,-1.8) -- (1,2);
	\draw[color=black] (0,-1.8) -- (0,2);
	
	\foreach \y in {-1,0,1,2}{
		\foreach \x in {-2,-1,0,1,2,3,4}{
			\filldraw[color=gray!10!black, fill=white] (\x,\y) circle(0.04);
		}
	}
	
	\draw (0.13,-0.13) node{$j$};
	\draw (-1.5,-1.32) node{$h$};
	\draw[<->] (-2,-1.45) -- (-1,-1.45);
	\draw (-1.6,-0.6) node{$\mathcal O_h$};
	
	\foreach \y in {-1,0,1,2}{
		\foreach \x in {-1,0,1,2}{
			\fill[black] (\x,\y) circle(0.05);
		}
	}
	
	\draw (0.1,1.4) node[fill=white, inner sep=1pt]{\small{\color{red}$\geq \frac{\sqrt{3}}{2} h$}};
	\draw[<->, color=red] (0.5,1) -- (0.5,1.87);
	
\end{tikzpicture}

	\caption{Illustration of  \eqref{eq:boundary_width}. The point $j$ is included in $L_h$, if and only if all the solid black points are in $\cO=B_R\setminus\overline U$. These solid black points are the neighbors of $j$.}
	\label{fig:cell}
\end{figure}
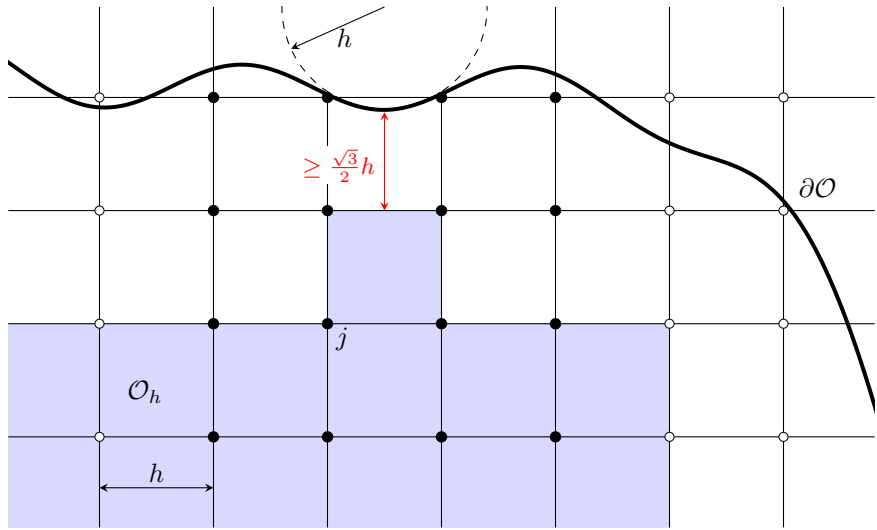
Finally, we choose the obvious triangulation $\mathcal T_h$ for $\cO_h$ obtained by splitting each square $[0,h]^2+j$ along its diagonal and denote by $W^h$ the corresponding P1 finite element space with zero boundary conditions, i.e.
\begin{align*}
	W^h:= \left\{ w\in H^1(\cO)\,\middle|\, w\equiv 0 \text{ in } \cO\setminus\cO_h \text{ and $w$ piecewise linear in }\cO_h \text{ w.r.t. } \mathcal{T}_h \right\}
\end{align*}
Next, we will construct a finite element approximation for the solution of problem \eqref{eq:v_m-problem} starting from C\'ea's classical lemma:
\begin{lemma}[C\'ea's Lemma, {\cite[Th. 2.4.1]{Ciarlet}}]\label{lemma:Cea}
	Let $\mathcal W^h\subset H^1_0(\cO)$ be a family of subspaces and assume the bilinear form $a:H^1_0(\cO)\times H^1_0(\cO)\to\C$ and linear functional $f\in H^{-1}(\cO)$ satisfy the hypotheses of the Lax-Milgram theorem. Let $v$ be the solution of the problem
	\begin{align*}
		a(v,w) = \langle f,w\rangle\quad\text{ for all }w\in H^1_0(\cO)
	\end{align*}
	and $v^h$ be the solution of 
	\begin{align}\label{eq:FEM_problem}
		a(v^h,w^h) = \langle f,w^h\rangle\quad\text{ for all }w^h\in \mathcal W^h.
	\end{align}
	Then there exists $C>0$ s.t.
	\begin{align}\label{eq:Cea-est}
		\|v-v^h\|_{H^1(\cO)} \leq C \inf_{w^h\in \mathcal W^h} \|v-w^h\|_{H^1(\cO)}
	\end{align}
	for all $h>0$.
\end{lemma}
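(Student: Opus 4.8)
The plan is to give the classical Galerkin argument; no new ideas beyond \cite{Ciarlet} are needed, so I will keep it brief. Denote by $M$ and $\alpha$ the continuity and coercivity constants of $a$ on $H^1_0(\cO)$ furnished by the Lax--Milgram hypotheses, i.e. $|a(u,w)|\le M\|u\|_{H^1(\cO)}\|w\|_{H^1(\cO)}$ and $\re a(u,u)\ge\alpha\|u\|_{H^1(\cO)}^2$ for all $u,w\in H^1_0(\cO)$. The point of listing these is that $M$ and $\alpha$ depend only on $a$ and not on the discretization parameter $h$, which is ultimately why the constant $C$ in \eqref{eq:Cea-est} is $h$-independent.

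First I would check that the discrete problem \eqref{eq:FEM_problem} is well posed: since $\mathcal W^h$ is a closed (indeed finite-dimensional) subspace of $H^1_0(\cO)$, the restriction of $a$ to $\mathcal W^h\times\mathcal W^h$ and of $f$ to $\mathcal W^h$ still satisfy the Lax--Milgram hypotheses, so there is a unique $v^h\in\mathcal W^h$ solving \eqref{eq:FEM_problem}. Next, because $\mathcal W^h\subset H^1_0(\cO)$, every $w^h\in\mathcal W^h$ is an admissible test function in both the continuous and the discrete equation; subtracting the two gives Galerkin orthogonality
\begin{align*}
	a(v-v^h,w^h)=0\qquad\text{for all }w^h\in\mathcal W^h.
\end{align*}
Then, for arbitrary $w^h\in\mathcal W^h$, decompose $v-v^h=(v-w^h)+(w^h-v^h)$ with $w^h-v^h\in\mathcal W^h$, so that by coercivity, Galerkin orthogonality and continuity,
\begin{align*}
	\alpha\|v-v^h\|_{H^1(\cO)}^2\le\re a(v-v^h,v-v^h)=\re a(v-v^h,v-w^h)\le M\|v-v^h\|_{H^1(\cO)}\|v-w^h\|_{H^1(\cO)}.
\end{align*}
Dividing by $\|v-v^h\|_{H^1(\cO)}$ (the estimate being trivial when this quantity vanishes) and taking the infimum over $w^h\in\mathcal W^h$ yields \eqref{eq:Cea-est} with $C=M/\alpha$, uniformly in $h$.

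There is no serious obstacle inside the proof itself; the only delicate point is the standing hypothesis that ``$a$ satisfies Lax--Milgram''. For the Helmholtz form $a(v,w)=\int_{\cO}\nabla v\cdot\nabla\overline w-k^2\int_{\cO}v\overline w$ relevant to \eqref{eq:v_m-problem}, coercivity fails in general, so when the lemma is actually invoked one must instead combine a G\aa rding inequality with uniqueness (equivalently, a Fredholm / inf--sup argument) to recover an $h$-independent quasi-optimality constant, using that $k^2$ is kept in a compact subset of $\C^-$ separated from $\sigma(H_{\mathrm D})$. Since the lemma as stated simply assumes the Lax--Milgram hypotheses, this subtlety is deferred to the point of application.
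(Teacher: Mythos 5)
Your proof is the standard C\'ea argument --- Galerkin orthogonality combined with the continuity and coercivity constants furnished by Lax--Milgram --- which is exactly the argument behind the cited \cite[Th. 2.4.1]{Ciarlet}; the paper itself gives no proof of this lemma, so there is nothing to compare beyond confirming that your version is correct and substantively identical to the cited one. Your closing caveat is well taken and is the only nontrivial content here: the bilinear form listed in the paper's application block, $a(v,w)=\int_{\cO}\nabla v\cdot\nabla w\,dx$, omits the $-k^2\int_{\cO}v\,\overline{w}\,dx$ term needed to reproduce the weak form of \eqref{eq:v_m-problem}, and once that term is restored the form is no longer coercive for general $k\in\C^-$, so the Lax--Milgram hypotheses of the lemma are not literally met. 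The paper silently repairs this by remarking that its constants depend on $k$ and are uniformly bounded only for $k^2$ in compact sets separated from $\sigma(\HD)$ --- which is precisely the G\aa rding/Fredholm (or Schatz-type) quasi-optimality argument you sketch in your last paragraph. Invoking that quasi-optimality result directly, rather than C\'ea's lemma in its Lax--Milgram form, would make the chain of reasoning airtight, but the conclusion used downstream (an $h$-uniform constant in \eqref{eq:Cea-est}) is the same.
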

We apply this lemma to our present situation by choosing
\begin{align*}
\begin{cases}
	\hfill \mathcal W^h &= \quad W^h\\
	a(v,w)              &= \quad\int_{\cO} \nabla v\cdot\nabla w\,dx\\
	\hfill f            &=\quad f_\alpha\\
	\hfill v            &=\quad v_\alpha \quad\text{ as in \eqref{eq:v_m-problem}}\\
	\hfill v_\alpha^h        &=\quad\text{the solution of \eqref{eq:FEM_problem}}.
\end{cases}
\end{align*}
\begin{remark}\label{remark:numint1}
	Note that the finite element approximation $v^h_\alpha$ is \emph{not} computable from the information in \eqref{eq:evaluation_set} in finitely many algebraic operations. An additional step of numerical integration is necessary to achieve this. We will revisit this issue at the end of the section.
\end{remark}
Lemma \ref{lemma:Cea} tells us that in order to estimate the approximation error $\|v_\alpha-v_\alpha^h\|_{H^1(\cO)}$ it is enough to estimate the \emph{projection error} $\inf_{w^h\in W^h} \|v_\alpha-w^h\|_{H^1(\cO)}$. For the latter, we have
\begin{align*}
	\|v_\alpha-w^h\|_{H^1(\cO)} &\leq 
	\|v_\alpha\|_{H^1(\cO\setminus\cO_h)} + \|v_\alpha-w^h\|_{H^1(\cO_h)} 
\end{align*}
for any $w^h\in W^h$. To estimate the right hand side, we further decompose $\cO_h$ into a boundary layer and an interior part. More precisely, we write
\begin{align*}
	\cO_h = \cO_h^\text{int}\cup\cO_h^\text{bdry},
\end{align*}
where $\cO_h^\text{bdry}$ is the union of all cells having nonempty intersection with $\del\cO_h$, and $\cO_h^\text{int}=\cO_h\setminus \cO_h^\text{bdry}$ (cf. Figure \ref{fig:O_h}). 
\begin{figure}[H]
	\centering
%

\begin{tikzpicture}[>=stealth]

	\draw[black] plot [smooth, tension=0.6] coordinates { (0.5,-2) (-0.1,0) (1.8,2.1) (4,2.7) (5,0) (7,1) (6.5,-2)};
	
	\def \q{0.3}
	
	\foreach \x in {3,...,19}{
		\fill[fill = gray!40] (\q*\x,-2) rectangle (\q*\x+\q,-2+\q);
	}
	\foreach \x in {3,...,20}{
		\fill[fill = gray!40] (\q*\x,-2+\q) rectangle (\q*\x+\q,-2+\q+\q);
	}
	\foreach \x in {2,...,20}{
		\fill[fill = gray!40] (\q*\x,-2+2*\q) rectangle (\q*\x+\q,-2+\q+2*\q);
	}
	\foreach \x in {1,...,20}{
		\fill[fill = gray!40] (\q*\x,-2+3*\q) rectangle (\q*\x+\q,-2+\q+3*\q);
	}
	\foreach \x in {1,...,20}{
		\fill[fill = gray!40] (\q*\x,-2+4*\q) rectangle (\q*\x+\q,-2+\q+4*\q);
	}
	\foreach \x in {1,...,15,19,20,21}{
		\fill[fill = gray!40] (\q*\x,-2+5*\q) rectangle (\q*\x+\q,-2+\q+5*\q);
	}
	\foreach \x in {1,...,14,20,21}{
		\fill[fill = gray!40] (\q*\x,-2+6*\q) rectangle (\q*\x+\q,-2+\q+6*\q);
	}
	\foreach \x in {2,...,14,21}{
		\fill[fill = gray!40] (\q*\x,-2+7*\q) rectangle (\q*\x+\q,-2+\q+7*\q);
	}
	\foreach \x in {2,...,13}{
		\fill[fill = gray!40] (\q*\x,-2+8*\q) rectangle (\q*\x+\q,-2+\q+8*\q);
	}
	\foreach \x in {3,...,13}{
		\fill[fill = gray!40] (\q*\x,-2+9*\q) rectangle (\q*\x+\q,-2+\q+9*\q);
	}
	\foreach \x in {4,...,13}{
		\fill[fill = gray!40] (\q*\x,-2+10*\q) rectangle (\q*\x+\q,-2+\q+10*\q);
	}
	\foreach \x in {5,...,12}{
		\fill[fill = gray!40] (\q*\x,-2+11*\q) rectangle (\q*\x+\q,-2+\q+11*\q);
	}
	\foreach \x in {7,...,12}{
		\fill[fill = gray!40] (\q*\x,-2+12*\q) rectangle (\q*\x+\q,-2+\q+12*\q);
	}
	\foreach \x in {8,...,12}{
		\fill[fill = gray!40] (\q*\x,-2+13*\q) rectangle (\q*\x+\q,-2+\q+13*\q);
	}
	\foreach \x in {10,11}{
		\fill[fill = gray!40] (\q*\x,-2+14*\q) rectangle (\q*\x+\q,-2+\q+14*\q);
	}
	
	\foreach \x in {3,19}{
		\fill[pattern=north east lines] (\q*\x,-2) rectangle (\q*\x+\q,-2+\q);
	}
	\foreach \x in {3,19,20}{
		\fill[pattern=north east lines] (\q*\x,-2+\q) rectangle (\q*\x+\q,-2+\q+\q);
	}
	\foreach \x in {2,3,20}{
		\fill[pattern=north east lines] (\q*\x,-2+2*\q) rectangle (\q*\x+\q,-2+\q+2*\q);
	}
	\foreach \x in {1,2,20}{
		\fill[pattern=north east lines] (\q*\x,-2+3*\q) rectangle (\q*\x+\q,-2+\q+3*\q);
	}
	\foreach \x in {1,15,16,17,18,19,20}{
		\fill[pattern=north east lines] (\q*\x,-2+4*\q) rectangle (\q*\x+\q,-2+\q+4*\q);
	}
	\foreach \x in {1,14,15,19,20,21}{
		\fill[pattern=north east lines] (\q*\x,-2+5*\q) rectangle (\q*\x+\q,-2+\q+5*\q);
	}
	\foreach \x in {1,2,14,20,21}{
		\fill[pattern=north east lines] (\q*\x,-2+6*\q) rectangle (\q*\x+\q,-2+\q+6*\q);
	}
	\foreach \x in {2,13,14,21}{
		\fill[pattern=north east lines] (\q*\x,-2+7*\q) rectangle (\q*\x+\q,-2+\q+7*\q);
	}
	\foreach \x in {2,3,13}{
		\fill[pattern=north east lines] (\q*\x,-2+8*\q) rectangle (\q*\x+\q,-2+\q+8*\q);
	}
	\foreach \x in {3,4,13}{
		\fill[pattern=north east lines] (\q*\x,-2+9*\q) rectangle (\q*\x+\q,-2+\q+9*\q);
	}
	\foreach \x in {4,5,12,13}{
		\fill[pattern=north east lines] (\q*\x,-2+10*\q) rectangle (\q*\x+\q,-2+\q+10*\q);
	}
	\foreach \x in {5,6,7,12}{
		\fill[pattern=north east lines] (\q*\x,-2+11*\q) rectangle (\q*\x+\q,-2+\q+11*\q);
	}
	\foreach \x in {7,8,12}{
		\fill[pattern=north east lines] (\q*\x,-2+12*\q) rectangle (\q*\x+\q,-2+\q+12*\q);
	}
	\foreach \x in {8,9,10,11,12}{
		\fill[pattern=north east lines] (\q*\x,-2+13*\q) rectangle (\q*\x+\q,-2+\q+13*\q);
	}
	\foreach \x in {10,11}{
		\fill[pattern=north east lines] (\q*\x,-2+14*\q) rectangle (\q*\x+\q,-2+\q+14*\q);
	}
		
	\draw (1.3,2.5) -- (1.9,2);
	\draw (0.3,1.8) -- (1.1,0.8);
	
	\draw (1.1,2.6)node[]{$\mathcal O$} ;
	\draw (0.2,2)node[]{$\mathcal O_h^{\mathrm{bdry}}$} ;
	\draw (2.5,0)node[]{$\mathcal O_h^{\mathrm{int}}$} ;

	\draw[dotted] (0.3*17,-2+0.3) -- (9, -1);
	\draw[dotted] (0.3*17,-2+0.6) -- (9, 2);
	\draw[dotted] (0.3*17+0.3,-2+0.3) -- (12, -1);
	\draw[dotted] (0.3*17+0.3,-2+0.6) -- (12, 2);
	\draw[line width=0.2] (0.3*17,-2+0.3) rectangle (0.3*17+0.3,-2+0.6);
	
	\filldraw[fill=white] (9, -1) rectangle (12, 2);
	\fill (9,-1) circle(0.1);
	\fill (12,-1) circle(0.1);
	\fill (9,2) circle(0.1);
	\fill (12,2) circle(0.1);
	\draw (9, -1) -- (12, 2);
	
	\draw (11,0) node[]{$K$} ;

\end{tikzpicture}

	\caption{Left: Sketch of $\cO=B_R\setminus\overline{U}$, $\cO_h$ and $\cO_h^{\mathrm{bdry}}$. Right: Sketch of the chosen triangulation.}
	\label{fig:O_h}
\end{figure}
Hence, for any $w^h\in W^h$ we have
\begin{align}\label{eq:FEM_error}
	\|v_\alpha-w^h\|_{H^1(\cO)} &\leq 
	\|v_\alpha-w^h\|_{H^1(\cO\setminus\cO_h^\text{int})} + \|v_\alpha-w^h\|_{H^1(\cO_h^\text{int})}  
\end{align}
Next, estimate the all three terms  on the r.h.s. of \eqref{eq:FEM_error} individually. To  estimate  the two last terms we will choose a particular ``test function'' $u^h\in W^h$.
We first   recall a classical interpolation estimate for linear finite elements  \cite[Th. 3.1.6]{Ciarlet}:
\begin{theorem}[Classical interpolation estimate]\label{th:interpolation_estimate}
	Let $K$ be an element of $\mathcal T_h$ and let $\Pi^h_K$ denote the linear interpolation operator on $H^2(K)$, i.e. $\Pi^h_Kw$ is the affine function with $(\Pi^h_Kw)(j)=w(j)$ for all corners $j$ of $K$. Then there exists $C>0$ (independent of $h,w,K$) such that
	\begin{align*}
		\|w - \Pi_K^hw\|_{H^1(K)} &\leq C h \|D^2w\|_{L^2(K)},\\
		\|w - \Pi_K^hw\|_{L^2(K)} &\leq C h^2 \|D^2w\|_{L^2(K)}.
	\end{align*}
\end{theorem}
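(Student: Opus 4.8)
\emph{Proof proposal.} This is the standard Bramble--Hilbert-plus-scaling argument, made especially clean here because the grid is uniform. Since every $K\in\mathcal T_h$ is a translate of one of the two triangles obtained by cutting the square $[0,h]^2$ along a diagonal, and these two triangles are congruent, there is a single reference triangle $\hat K$ (a half of the unit square $[0,1]^2$) together with affine maps $F_K(\hat x)=hO_K\hat x+b_K$, $O_K$ orthogonal, with $F_K(\hat K)=K$; hence $\|DF_K\|\le Ch$, $\|DF_K^{-1}\|\le Ch^{-1}$, $|\det DF_K|=h^2$, all constants absolute. First I would record the two routine consequences. Writing $\hat v:=v\circ F_K$, the classical affine change-of-variables estimates for Sobolev seminorms give, for $m=0,1,2$,
\[
	|\hat v|_{H^m(\hat K)}\le C\,h^{m-1}\,|v|_{H^m(K)},\qquad |v|_{H^m(K)}\le C\,h^{1-m}\,|\hat v|_{H^m(\hat K)},
\]
with $|\cdot|_{H^0}=\|\cdot\|_{L^2}$ and the exponents coming from $\|DF_K\|^m|\det DF_K|^{-1/2}$ in two dimensions. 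Second, interpolation commutes with the pullback: $\widehat{\Pi^h_K w}=\hat\Pi\,\hat w$, where $\hat\Pi:H^2(\hat K)\to P_1(\hat K)$ is the vertex interpolant on $\hat K$, because $F_K$ carries vertices to vertices and $P_1$ to $P_1$.

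Next I would prove the estimate on the fixed element $\hat K$. Since $2$ exceeds half the dimension, $H^2(\hat K)\hookrightarrow C(\overline{\hat K})$ is bounded, so vertex evaluations are continuous functionals and $\hat\Pi$ is a bounded projection of $H^2(\hat K)$ onto $P_1(\hat K)\subset H^2(\hat K)$; thus $I-\hat\Pi$ is bounded on $H^2(\hat K)$ and vanishes on $P_1(\hat K)$. The Bramble--Hilbert lemma — equivalently the Deny--Lions argument: $|\cdot|_{H^2(\hat K)}$ is a norm on the quotient $H^2(\hat K)/P_1(\hat K)$, and $H^2(\hat K)\hookrightarrow H^1(\hat K)$ is compact, so $\inf_{p\in P_1}\|\hat w-p\|_{H^2(\hat K)}\le \hat C\,|\hat w|_{H^2(\hat K)}$ — applied to $\hat w-\hat\Pi\hat w$ together with $\hat\Pi p=p$ for $p\in P_1$ yields
\[
	\|\hat w-\hat\Pi\hat w\|_{H^1(\hat K)}+\|\hat w-\hat\Pi\hat w\|_{L^2(\hat K)}\le C\,|\hat w|_{H^2(\hat K)}\qquad\text{for all }\hat w\in H^2(\hat K).
\]

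Finally I would transport this back to $K$. For the $H^1$-seminorm, $|w-\Pi^h_Kw|_{H^1(K)}\le C\,|\widehat{w-\Pi^h_Kw}|_{H^1(\hat K)}=C\,|\hat w-\hat\Pi\hat w|_{H^1(\hat K)}\le C\,|\hat w|_{H^2(\hat K)}\le C\,h\,|w|_{H^2(K)}$. For the $L^2$-norm, $\|w-\Pi^h_Kw\|_{L^2(K)}\le C\,h\,\|\hat w-\hat\Pi\hat w\|_{L^2(\hat K)}\le C\,h\,|\hat w|_{H^2(\hat K)}\le C\,h^2\,|w|_{H^2(K)}$, which is the second inequality; since $h\le1$ this also bounds the $L^2$-part of the $H^1$-norm by $C\,h\,|w|_{H^2(K)}$, and adding it to the seminorm bound gives the first inequality. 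Recalling $|w|_{H^2(K)}=\|D^2w\|_{L^2(K)}$ finishes the proof. The only non-routine ingredient is the Bramble--Hilbert lemma on $\hat K$ (which rests on the compact embedding $H^2\hookrightarrow H^1$); the rest is bookkeeping with the affine map, and here there is no shape-regularity issue at all, since up to isometry exactly one reference triangle occurs.
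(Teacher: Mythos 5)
Your proof is correct and is precisely the standard scaling-plus-Bramble--Hilbert argument of the cited reference (Ciarlet, Th.~3.1.6), which the paper invokes without proof; the uniform grid indeed trivializes the shape-regularity bookkeeping, and your exponent accounting ($h^{m-1}$ and $h^{1-m}$ from $\|DF_K\|^m|\det DF_K|^{\mp 1/2}$ in two dimensions) is right. The one hypothesis worth making explicit --- that $H^2(\hat K)\hookrightarrow C(\overline{\hat K})$ so the vertex interpolant is well defined and bounded on $H^2$ --- you do address, so nothing is missing.
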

Now, we choose an explicit function $u^h\in W^h$ as follows. 
\begin{itemize}
	\item For each finite element $K\subset\cO_h^{\mathrm{int}}$, let $u^h|_{K}=\Pi_K^h(v_\alpha)$ be the linear interpolation of $v_\alpha$, i.e. $u^h(j)=v_\alpha(j)$ for all nodes $j\in\del K\cap L_h$. 
	\item For boundary elements $K\subset\cO_h^{\mathrm{bdry}}$, we set
	\begin{align*}
		u^h(i) &= 0, \qquad\quad\text{for boundary nodes }i,\\
		u^h(j) &= v_\alpha(j), \quad\,\text{for interior nodes }j.
	\end{align*}
	Here a \emph{boundary node} is any $i\in L_h\cap\del\cO_h$. Note that testing whether a node is a boundary node can be done in finitely many steps, because boundary nodes are precisely those nodes whose neighbors are not all contained in $\cO$.
\end{itemize}
This defines a continuous, piecewise linear function in $W^h$. We have the following error estimates.
\begin{lemma}[third term in \eqref{eq:FEM_error}]
\label{lemma:Oh^int}
	One has the interior error estimate
	\begin{align*}
		\|v_\alpha-u^h\|_{H^1(\cO_h^{\mathrm{int}})} \leq C h \|v_\alpha\|_{H^2(\cO_h^{\mathrm{int}})}.
	\end{align*}
\end{lemma}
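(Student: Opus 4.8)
The plan is to read off the estimate elementwise from the classical interpolation bound of Theorem \ref{th:interpolation_estimate}, exploiting that $u^h$ was \emph{defined} on every interior element $K\subset\cO_h^{\mathrm{int}}$ to be the nodal interpolant $\Pi_K^h(v_\alpha)$, and then summing the local errors over the (essentially disjoint) elements that tile $\cO_h^{\mathrm{int}}$.

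Before carrying out the summation I would dispatch two small points. First, the right-hand side of \eqref{eq:v_m-problem} is smooth on $\cO_h^{\mathrm{int}}$ and, by \eqref{eq:boundary_width}, $\overline{\cO_h^{\mathrm{int}}}$ lies at positive distance from $\del\cO$; interior elliptic regularity for $(-\Delta-k^2)v_\alpha=f_\alpha$ then gives $v_\alpha\in H^2(\cO_h^{\mathrm{int}})$, so that the right-hand side of the asserted inequality is finite and (since $H^2\hookrightarrow C^0$ in two dimensions) the nodal interpolation operators $\Pi_K^h$ are well defined on $v_\alpha$. Second, I would check the bookkeeping at element interfaces: every corner of an interior element $K\subset\cO_h^{\mathrm{int}}$ lies strictly inside $\cO_h$ (interior elements, by definition, do not meet $\del\cO_h$), hence is an interior node, so the boundary-layer prescription $u^h(i)=0$ never interferes and $u^h|_K=\Pi_K^h(v_\alpha)$ genuinely holds for all $K\subset\cO_h^{\mathrm{int}}$; in particular the elementwise linear pieces agree on shared nodes and $u^h$ restricted to $\cO_h^{\mathrm{int}}$ is a legitimate element of $H^1(\cO_h^{\mathrm{int}})$.

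With these in place the computation is immediate: writing $\cO_h^{\mathrm{int}}$ as the union of its finite elements, which have pairwise disjoint interiors, and applying the first inequality of Theorem \ref{th:interpolation_estimate} on each $K$ (with the same constant $C$, since it is independent of $h$, $w$ and $K$, hence of $\alpha$), one gets
\begin{align*}
	\|v_\alpha-u^h\|_{H^1(\cO_h^{\mathrm{int}})}^2
	= \sum_{K\subset\cO_h^{\mathrm{int}}} \|v_\alpha-\Pi_K^h v_\alpha\|_{H^1(K)}^2
	\leq C^2 h^2 \sum_{K\subset\cO_h^{\mathrm{int}}} \|D^2 v_\alpha\|_{L^2(K)}^2
	= C^2 h^2 \|D^2 v_\alpha\|_{L^2(\cO_h^{\mathrm{int}})}^2 .
\end{align*}
Bounding $\|D^2 v_\alpha\|_{L^2(\cO_h^{\mathrm{int}})}\leq\|v_\alpha\|_{H^2(\cO_h^{\mathrm{int}})}$ and taking square roots yields the claim. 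There is no genuine obstacle in this lemma: the entire content lies in having pre-arranged $u^h$ to be the elementwise interpolant on the interior mesh, and the only steps needing a line of care are the interior $H^2$-regularity of $v_\alpha$ (so that $\Pi_K^h v_\alpha$ is defined) and the verification that the boundary-layer definition of $u^h$ does not clash with the interior one at shared nodes.
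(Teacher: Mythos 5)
Your argument is correct and follows exactly the same route as the paper's, which dispatches this lemma in one line by citing the classical interpolation estimate of Theorem \ref{th:interpolation_estimate}; you have simply spelled out the elementwise summation and the two bookkeeping points (interior $H^2$-regularity of $v_\alpha$ and consistency of $u^h$ at shared nodes) that the paper leaves implicit.
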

\begin{proof}
	This follows immediately from Theorem \ref{th:interpolation_estimate}.
\end{proof}
\begin{lemma}[first term in \eqref{eq:FEM_error}]\label{lemma:Oh^bdry}
	For any $q\in(2,\infty)$ there exists $C>0$ such that the boundary layer error estimate
	\begin{align*}
		\|v_\alpha-u^h\|_{H^1(\cO\setminus\cO_h^\text{int})} \leq C h^{\f12 - \f1q} \|v_\alpha\|_{H^2(\cO)}.
	\end{align*}
	holds.
\end{lemma}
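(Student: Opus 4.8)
The plan is to estimate $\|v_\alpha - u^h\|_{H^1(\cO\setminus\cO_h^{\mathrm{int}})}$ by decomposing the region $\cO\setminus\cO_h^{\mathrm{int}}$ into two pieces: the thin ``collar'' $\cO\setminus\cO_h$ (which lies strictly between $\del\cO_h$ and $\del\cO$ and has width $\cO(h)$ by \eqref{eq:boundary_width}), and the layer of boundary cells $\cO_h^{\mathrm{bdry}}$. On the first piece, $u^h$ vanishes identically (it is in $W^h$), so we only need to control $\|v_\alpha\|_{H^1(\cO\setminus\cO_h)}$. On the second piece we must compare $v_\alpha$ with its partially-interpolated, partially-zeroed version $u^h$.

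First I would handle the collar $\cO\setminus\cO_h$. Since $v_\alpha \in H^2(\cO)$ and vanishes on $\del\cO$ (it solves \eqref{eq:v_m-problem}), and the collar is a tubular neighborhood of $\del\cO$ of width $\cO(h)$, a standard trace/Poincaré-type argument gives $\|v_\alpha\|_{L^2(\cO\setminus\cO_h)} \lesssim h\,\|\nabla v_\alpha\|_{L^2(\text{collar}')}$ and $\|\nabla v_\alpha\|_{L^2(\cO\setminus\cO_h)}$ itself is small — but only going to zero like the square root of the measure of the collar unless we use more. The clean way: use the Gagliardo–Nirenberg / Sobolev embedding $H^2(\cO)\hookrightarrow W^{1,q}(\cO)$ for every $q<\infty$ in two dimensions, combined with Hölder's inequality on the thin set. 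Since $|\cO\setminus\cO_h^{\mathrm{int}}| \lesssim h$ (a union of $\cO(h^{-1})$ cells each of area $h^2$), Hölder gives $\|\nabla v_\alpha\|_{L^2(\cO\setminus\cO_h^{\mathrm{int}})} \le |\cO\setminus\cO_h^{\mathrm{int}}|^{\frac12-\frac1q}\,\|\nabla v_\alpha\|_{L^q(\cO)} \lesssim h^{\frac12-\frac1q}\,\|v_\alpha\|_{W^{1,q}(\cO)} \lesssim h^{\frac12-\frac1q}\,\|v_\alpha\|_{H^2(\cO)}$. For the $L^2$ part of the $H^1$ norm, on the collar one additionally uses that $v_\alpha$ vanishes on $\del\cO$ to get an extra factor of $h$ (a one-dimensional fundamental-theorem-of-calculus estimate along normals), which is even better than needed.

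Next I would treat the boundary cells $\cO_h^{\mathrm{bdry}}$. On each such cell $K$, $u^h$ agrees with the interpolant $\Pi_K^h v_\alpha$ at interior nodes but is forced to $0$ at boundary nodes, whereas $v_\alpha$ itself is small near $\del\cO$ (distance $\le 3\sqrt2\,h$ from the boundary by \eqref{eq:boundary_width}). Writing $u^h = \Pi_K^h v_\alpha - r^h$ where $r^h$ is the piecewise-linear function interpolating the values $v_\alpha(i)$ at boundary nodes $i$ and $0$ elsewhere, the interpolation error $\|v_\alpha - \Pi_K^h v_\alpha\|_{H^1(K)} \lesssim h\|D^2 v_\alpha\|_{L^2(K)}$ is controlled by Theorem \ref{th:interpolation_estimate}, and summing over the $\cO(h^{-1})$ boundary cells gives $\lesssim h\|v_\alpha\|_{H^2(\cO_h^{\mathrm{bdry}})}$. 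The correction $r^h$ has nodal values $|v_\alpha(i)| = |v_\alpha(i) - v_\alpha(\text{nearest boundary point})| \lesssim \mathrm{dist}(i,\del\cO)^{1/2}\,\|v_\alpha\|_{H^2}$ in two dimensions via $H^2\hookrightarrow C^{0,\gamma}$... — or more robustly, since $v_\alpha$ vanishes on $\del\cO$ which is $\cO(h)$-close, $|v_\alpha(i)|\lesssim h\|\nabla v_\alpha\|_{L^\infty}\lesssim h^{1-\frac2q}\|v_\alpha\|_{W^{1,q}}$; then by the standard P1 scaling estimates $\|r^h\|_{H^1(K)}^2 \lesssim \sum_i |v_\alpha(i)|^2$ over boundary nodes $i$ of $K$, and summing over $\cO(h^{-1})$ cells produces the $h^{\frac12-\frac1q}$ rate. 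Assembling the collar estimate and the boundary-cell estimate and taking $q$ as in the statement yields the claimed bound.

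The main obstacle I expect is getting the sharp power $h^{\frac12-\frac1q}$ rather than something weaker: the naive bound on the collar and the boundary cells each contain a factor $|\text{thin set}|^{\frac12-\frac1q}\sim h^{\frac12-\frac1q}$ only because one trades $L^2$ for $L^q$ via the two-dimensional Sobolev embedding $H^2\hookrightarrow W^{1,q}$, which fails at $q=\infty$; hence the loss of $\frac1q$ and the restriction $q<\infty$. One must be careful that the constant $C$ stays independent of $\alpha$ — this is fine because every estimate is in terms of $\|v_\alpha\|_{H^2(\cO)}$ with absolute Sobolev/interpolation constants, and $\alpha$-dependence is entirely absorbed into $v_\alpha$ (and ultimately into $\|f_\alpha\|_{H^1}$ via elliptic regularity, which is where Proposition \ref{prop:FEM_estimate} picks it up). A secondary technical point is making the ``width $\cO(h)$'' statements for $\cO_h^{\mathrm{bdry}}$ and $\cO\setminus\cO_h$ rigorous using \eqref{eq:boundary_width} together with the curvature bound $h < 1/\max(\mathrm{curv}(\del U))$, so that these sets are genuine tubular neighborhoods of $\del\cO$ on which normal coordinates are available.
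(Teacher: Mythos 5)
Your argument is correct and follows essentially the same route as the paper: a Morrey-type $L^\infty$ estimate for $v_\alpha$ near $\partial\mathcal O$ (exploiting the zero boundary condition together with the $O(h)$ width of $\mathcal O\setminus\mathcal O_h^{\mathrm{int}}$ from \eqref{eq:boundary_width}), a H\"older trade-off $L^2\to L^q$ on the thin region, and the two-dimensional embedding $H^1\hookrightarrow L^q$. The only substantive difference is organizational — you write $u^h=\Pi^h_K v_\alpha - r^h$ on boundary cells and estimate $r^h$ via P1 scaling, whereas the paper bounds $\|u^h\|_{L^\infty}$ and $\|\nabla u^h\|_{L^2(K)}$ directly from $\|u^h\|_{L^\infty}\le\|v_\alpha\|_{L^\infty}$ — and this produces the same final rate up to an inconsequential relabeling of $q$ (your boundary-cell contribution actually comes out as $h^{1/2-2/q}$: there are $O(h^{-1})$ boundary nodes each contributing $O(h^{2-4/q})$ to $\|r^h\|_{H^1}^2$; since $q\in(2,\infty)$ is free, this still yields the $h^{1/3}$ rate used in Proposition \ref{prop:FEM_estimate} by taking $q=12$ instead of $q=6$).
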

\begin{proof}
	We begin with $\|v_\alpha-u^h\|_{L^2(\cO\setminus\cO_h^{\mathrm{int}})}$. Let $x\in \cO\setminus\cO_h^{\mathrm{int}}$ 	and note that similarly to eq. \eqref{eq:boundary_width} there exists $y\in \del\cO$ with $|x-y|\leq 4\sqrt 2 h$. By Morrey's inequality (cf. (28) in the proof of \cite[Th. 9.12]{Brezis}) this implies
	\begin{align}
		|v_\alpha(x)|  &= |v_\alpha(x)-v_\alpha(y)| \nonumber\\
		&\leq C|x-y|\|v_\alpha\|_{H^2(\cO\setminus\cO_h^{\mathrm{int}})} \nonumber \\
		&\leq Ch\|v_\alpha\|_{H^2(\cO\setminus\cO_h^{\mathrm{int}})},\label{eq:O^bdry_estimate}
	\end{align}
	Moreover, we note that by the definition of $u^h$, one has $\|u^h\|_{L^\infty(\cO\setminus\cO_h^{\mathrm{int}})} \leq \|v_\alpha\|_{L^\infty(\cO\setminus\cO_h^{\mathrm{int}})}$.
	These facts allow us to estimate
	\begin{align}
		\|v_\alpha-u^h\|_{L^2(\cO\setminus\cO_h^{\mathrm{int}})} &\leq \|v_\alpha\|_{L^2(\cO\setminus\cO_h^{\mathrm{int}})} +\|u^h\|_{L^2(\cO\setminus\cO_h^{\mathrm{int}})}  \nonumber\\
		&\leq |\cO\setminus\cO_h^{\mathrm{int}}|^{\f12}\big( \|v_\alpha\|_{L^\infty(\cO\setminus\cO_h^{\mathrm{int}})} +\|u^h\|_{L^\infty(\cO\setminus\cO_h^{\mathrm{int}})} \big) \nonumber\\
		&\leq Ch^{\f12}\|v_\alpha\|_{L^\infty(\cO\setminus\cO_h^{\mathrm{int}})} \nonumber\\
		&\leq Ch^{\f32}\|v_\alpha\|_{H^2(\cO\setminus\cO_h^{\mathrm{int}})} \label{eq:L2_estimate}
	\end{align}
	where we have used \eqref{eq:O^bdry_estimate} in the last line.
	
	Turning to the gradient, a similar estimate is obtained as follows. By definition of $u^h$, one has 
	\begin{align*}
		|\nabla u^h(x)| \leq
		\begin{cases}
 			0 & \text{ for } x\in\cO\setminus\cO_h \\
 			\f Ch \|v_\alpha\|_{L^\infty(K)} & \text{ for } x\in K\subset\cO_h^\mathrm{bdry}
		\end{cases}
	\end{align*}
	In particular, for every $K\subset \cO_h^\mathrm{bdry}$ one has 
	\begin{align}
		\|\nabla u^h\|_{L^2(K)} &\leq C\|v_\alpha\|_{L^\infty(K)} \nonumber\\
		&\leq C h \|v_\alpha\|_{H^2(K)} \qquad \text{by \eqref{eq:O^bdry_estimate}.}\label{eq:grad_sobolev_estimate}
	\end{align}
	Thus, using the fact that $H^1(\cO)\hookrightarrow L^q(\cO)$ for all $2<q<\infty$, we obtain
	\begin{align*}
		\tfrac12 \|\nabla v_\alpha-\nabla u^h\|^2_{L^2(\cO\setminus\cO_h^{\mathrm{int}})} 
		&\leq \|\nabla v_\alpha\|_{L^2(\cO\setminus\cO_h^{\mathrm{int}})}^2 +\|\nabla u^h\|_{L^2(\cO\setminus\cO_h^{\mathrm{int}})}^2  \\
		&= \|\nabla v_\alpha\|_{L^2(\cO\setminus\cO_h^{\mathrm{int}})}^2 +\|\nabla u^h\|_{L^2(\cO_h^{\mathrm{bdry}})}^2  \\
		&= \|\nabla v_\alpha\|_{L^2(\cO\setminus\cO_h^{\mathrm{int}})}^2 + \sum_{K\subset\cO_h^{\mathrm{bdry}}} \|\nabla u^h\|_{L^2(K)}^2\\
		&\leq |\cO\setminus\cO_h^{\mathrm{int}}|^{1 - \f2q}\|\nabla v_\alpha\|_{L^q(\cO\setminus\cO_h^{\mathrm{int}})}^2 + Ch^2\sum_{K\subset\cO_h^{\mathrm{bdry}}} \|v_\alpha\|_{H^2(K)}^2\\
		&\leq Ch^{1 - \f2q} \|\nabla v_\alpha\|_{H^1(\cO)}^2 + Ch^2 \|v_\alpha\|_{H^2(\cO_h^{\mathrm{bdry}})}^2\\
		&\leq Ch^{1 - \f2q} \|v_\alpha\|_{H^2(\cO)}^2
	\end{align*}
	for all $2<q<\infty$, where \eqref{eq:grad_sobolev_estimate} and the Sobolev embedding $H^1(\cO)\hookrightarrow L^q(\cO)$ were used in the fourth line.
Hence the gradient estimate becomes
	\begin{align}\label{eq:gradient_estimate}
		\|\nabla v_\alpha-\nabla u^h\|_{L^2(\cO\setminus\cO_h^{\mathrm{int}})} \leq C h^{\f12 - \f1q} \|v_\alpha\|_{H^2(\cO)}
	\end{align}
	Combining estimates \eqref{eq:L2_estimate} and \eqref{eq:gradient_estimate} completes the proof.
\end{proof}
Lemmas \ref{lemma:Oh^int} and \ref{lemma:Oh^bdry} (choosing $q=6$, for definiteness), together with \eqref{eq:Cea-est} and \eqref{eq:FEM_error}, finally yield the estimate
\begin{align*}
	\|v_\alpha-v^h_\alpha\|_{H^1(\cO)} &\leq C \inf_{w^h\in W^h} \|v_\alpha-w^h\|_{H^1(\cO)}\\
	&\leq C \|v_\alpha-u^h\|_{H^1(\cO)}\\
	&\leq C\Big[ \|v_\alpha-u^h\|_{H^1(\cO\setminus\cO_h^\text{int})} + \|v_\alpha-u^h\|_{H^1(\cO_h^\text{int})} \Big]\\
	&\leq C\Big[ h^{\f13} \|v_\alpha\|_{H^2(\cO)} + h \|v_\alpha\|_{H^2(\cO_h^{\mathrm{int}})} \Big]\\
	&\leq Ch^{\f13}\|v_\alpha\|_{H^2(\cO)}
\end{align*}
(for $h$ small enough). Finally, by elliptic regularity, we obtain the estimate
\begin{align}\label{eq:final_FEM_estimate}
	\|v_\alpha-v_\alpha^h\|_{H^1(\cO)} &\leq Ch^{\f13}\|f_\alpha\|_{L^2(\cO)}
\end{align}
where we recall that $f_\alpha$ was defined in \eqref{eq:f-alpha}.
\begin{remark}[Numerical integration]
	As we noted in Remark \ref{remark:numint1}, the approximate solution $v_\alpha^h$ is not computable from a finite subset of \eqref{eq:evaluation_set} in finitely many steps. However, the classical theory of finite elements shows that numerical integration does not change the error estimate \eqref{eq:final_FEM_estimate} (see e.g. \cite[Sec. 4.1]{Ciarlet}).
\end{remark}
This concludes the proof of Proposition \ref{prop:FEM_estimate}.
\subsection{Matrix elements, revisited}
Going back to \eqref{eq:matrix_elements} (and taking into account \eqref{eq:explicit_extension}), in order to compute the matrix elements $\mathcal K_{\alpha\beta}$, we have to approximate the integrals
\begin{align}
	I_1 &:= \int_{\cO} \overline {E_\beta}\,v_\alpha\,dx,   \notag\\
	I_2 &:= \int_{\cO} \nabla \overline {E_\beta}\cdot\nabla v_\alpha\,dx, \notag \\
	I_3 &:= \int_{\cO} \overline {E_\beta}f_\alpha\,dx. \label{eq:Bessel-Term}
\end{align}
This will be a simple consequence of Prop. \ref{prop:FEM_estimate} and Th. \ref{th:interpolation_estimate}. Let $\Pi^h$ denote the linear interpolation operator w.r.t. the triangulation $\mathcal T_h$, i.e. $\Pi^hu(j)=u(j)$ for all $j\in L_h$ and $\Pi^hu|_K$ is an affine function for all elements $K$.
\begin{prop}\label{prop:FEM_numerical_integrals}
	We have
	\begin{align}
		\left|I_1 - \int_{\cO} \Pi^h\overline {E_\beta} v_\alpha^h\,dx\right| &\leq C  h^{\f13}  \beta^2\|f_\alpha\|_{L^2(\cO)}, \label{eq:FEM-Term1_error}\\
		\left|I_2 - \int_{\cO} \Pi^h\nabla\overline {E_\beta}\cdot\nabla v_\alpha^h\,dx\right| &\leq C  h^{\f13}  \beta^2\|f_\alpha\|_{L^2(\cO)},\label{eq:FEM-Term2_error}\\
		\left|I_3 - \int_{\cO}\Pi^h\overline {E_\beta}\,\Pi^h f_\alpha\,dx\right| &\leq Ch^{2}\beta^2\|f_\alpha\|_{H^2(\cO)}. \label{eq:FEM-Term3_error}
	\end{align}
\end{prop}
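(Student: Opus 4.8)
The plan is to prove the three estimates in Proposition \ref{prop:FEM_numerical_integrals} by a common strategy: in each case insert intermediate terms so that the error splits into (a) an interpolation error for the explicitly known factor $\overline{E_\beta}$ (or $f_\alpha$), controlled by Theorem \ref{th:interpolation_estimate}, and (b) the finite element error $\|v_\alpha - v_\alpha^h\|_{H^1(\cO)}$ controlled by Proposition \ref{prop:FEM_estimate} (equivalently \eqref{eq:final_FEM_estimate}). The factors of $\beta^2$ will come from differentiating $E_\beta(r,\theta) = \rho(r)e^{\i\beta\theta}/\sqrt{2\pi R}$, and from the fact that $f_\alpha$ and $v_\alpha$ themselves depend on $\alpha$; one needs to be careful that the stated bounds are in terms of $\|f_\alpha\|_{L^2(\cO)}$ (or $\|f_\alpha\|_{H^2(\cO)}$ for $I_3$), with the $\beta$-dependence made fully explicit. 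I would first record the elementary bound $\|E_\beta\|_{H^2(\cO)}\le C\beta^2$ (and likewise $\|\nabla E_\beta\|_{H^1(\cO)}\le C\beta^2$), using $\supp\rho\subset \overline{B_R}\setminus B_{R-1}$, $\rho$ smooth, and $|\del_\theta^j e^{\i\beta\theta}| = |\beta|^j$, so the angular derivatives produce at most two powers of $\beta$ on a fixed annulus.

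For \eqref{eq:FEM-Term1_error}, I would write
\begin{align*}
	I_1 - \int_{\cO}\Pi^h\overline{E_\beta}\,v_\alpha^h\,dx
	= \int_{\cO}(\overline{E_\beta}-\Pi^h\overline{E_\beta})v_\alpha\,dx
	+ \int_{\cO}\Pi^h\overline{E_\beta}\,(v_\alpha - v_\alpha^h)\,dx,
\end{align*}
and estimate the first term by Cauchy--Schwarz together with the $L^2$-interpolation bound of Theorem \ref{th:interpolation_estimate} summed over elements, $\|E_\beta - \Pi^h E_\beta\|_{L^2(\cO)}\le Ch^2\|D^2 E_\beta\|_{L^2(\cO)}\le Ch^2\beta^2$, times $\|v_\alpha\|_{L^2(\cO)}\le C\|f_\alpha\|_{L^2(\cO)}$ (by the resolvent bound, $k^2$ away from $\sigma(\HD)$). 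The second term is bounded by $\|\Pi^h\overline{E_\beta}\|_{L^2(\cO)}\|v_\alpha - v_\alpha^h\|_{L^2(\cO)}\le C\beta^2\cdot Ch^{1/3}\|f_\alpha\|_{L^2(\cO)}$, using stability of the interpolant and \eqref{eq:final_FEM_estimate}. Since $h^{1/3}$ dominates $h^2$ for small $h$, the bound \eqref{eq:FEM-Term1_error} follows. The estimate \eqref{eq:FEM-Term2_error} is completely analogous, using the $H^1$-interpolation bound $\|E_\beta - \Pi^h E_\beta\|_{H^1(\cO)}\le Ch\|D^2 E_\beta\|_{L^2(\cO)}\le Ch\beta^2$ for the first piece and $\|\nabla v_\alpha - \nabla v_\alpha^h\|_{L^2(\cO)}\le Ch^{1/3}\|f_\alpha\|_{L^2(\cO)}$ for the second; again $h^{1/3}$ is the surviving power.

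For \eqref{eq:FEM-Term3_error} there is no $v_\alpha^h$, but now \emph{both} factors are interpolated, so I would split
\begin{align*}
	I_3 - \int_{\cO}\Pi^h\overline{E_\beta}\,\Pi^h f_\alpha\,dx
	= \int_{\cO}(\overline{E_\beta}-\Pi^h\overline{E_\beta})f_\alpha\,dx
	+ \int_{\cO}\Pi^h\overline{E_\beta}\,(f_\alpha - \Pi^h f_\alpha)\,dx,
\end{align*}
and apply the $L^2$-interpolation estimate to each, giving $h^2\beta^2\|f_\alpha\|_{L^2}$ for the first and $C\beta^2\cdot h^2\|f_\alpha\|_{H^2}$ for the second (using $\|f_\alpha - \Pi^h f_\alpha\|_{L^2(\cO)}\le Ch^2\|D^2 f_\alpha\|_{L^2(\cO)}$ and stability of $\Pi^h\overline{E_\beta}$ in $L^2$); since $f_\alpha = T_\rho S(k)e_\alpha$ is explicit and smooth on the annulus, $f_\alpha\in H^2(\cO)$ and the $H^2$-norm is what appears on the right-hand side, as stated. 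The main technical point to watch — and the place I expect to spend most care — is bookkeeping the $\beta$- and $\alpha$-dependence uniformly: one must verify that the interpolation constants in Theorem \ref{th:interpolation_estimate} are genuinely independent of the element $K$ and of $h$ (true for the uniform triangulation $\mathcal T_h$), that $\supp E_\beta$ lies in the fixed annulus where $\rho$ and its derivatives are $h$- and $\beta$-independent, and that the reduction from $\|v_\alpha\|_{H^2(\cO)}$ (or $\|v_\alpha\|_{L^2}$) to $\|f_\alpha\|$ via elliptic regularity carries a constant uniform for $k^2$ in a compact subset of $\C^-$, as already noted after Proposition \ref{prop:FEM_estimate}. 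Everything else is routine triangle-inequality and Cauchy--Schwarz manipulation.
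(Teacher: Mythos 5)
Your proposal follows essentially the same route as the paper: split each error via the triangle inequality into an interpolation-error piece controlled by Theorem~\ref{th:interpolation_estimate} and an FEM-error piece controlled by Proposition~\ref{prop:FEM_estimate} (equivalently \eqref{eq:final_FEM_estimate}), then close with elliptic regularity and the elementary bound $\|E_\beta\|_{H^2}\le C\beta^2$. The only cosmetic difference is which cross-term you insert -- the paper pairs $E_\beta-\Pi^hE_\beta$ with $v_\alpha^h$ (using $\|v_\alpha^h\|\le\|v_\alpha\|+\|v_\alpha-v_\alpha^h\|$) while you pair it with $v_\alpha$ and invoke $L^2$-stability of $\Pi^h E_\beta$ -- and both immediately give the stated estimates.
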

\begin{proof}
	We begin with \eqref{eq:FEM-Term1_error} and assume w.l.o.g. $h<1$. Constants independent of $h,\alpha,\beta$ will be denoted $C$ and their value may change from line to line. By the triangle and H\"older inequalities, we have
	\begin{align*}
		\left|I_1 - \int_{\cO} \Pi^h\overline {E_\beta} v_\alpha^h\,dx\right| &\leq \| E_\beta\|_{L^2}\| v_\alpha - v_\alpha^h\|_{L^2} + \|v_\alpha^h\|_{L^2}\|E_\beta-\Pi^hE_\beta\|_{L^2}
		\\
		&\leq C  h^{\f13}  \| E_\beta\|_{L^2}\|f_\alpha\|_{L^2} + Ch^2\|E_\beta\|_{H^2}\big(\|v_\alpha\|_{L^2}+\|v_\alpha^h-v_\alpha\|_{L^2} \big)
		\\
		&\leq C  h^{\f13}  \| E_\beta\|_{L^2}\|f_\alpha\|_{L^2} + Ch^2\|E_\beta\|_{H^2}\big(\|f_\alpha\|_{L^2} +   h^{\f13}  \|f_\alpha\|_{L^2} \big)
		\\
		&\leq C  h^{\f13}  \| E_\beta\|_{H^2}\|f_\alpha\|_{L^2}
		\\
		&\leq C  h^{\f13}  \beta^2\|f_\alpha\|_{L^2},
	\end{align*}
	where we have used Prop. \ref{prop:FEM_estimate} \& Th. \ref{th:interpolation_estimate} in the second line, and Prop. \ref{prop:FEM_estimate} \& elliptic regularity in the third line. The estimate $\| E_\beta\|_{H^2}\leq C\beta^2$ used in the last line is evident from the definition of $E_\beta$.
	We omit the calculations leading to \eqref{eq:FEM-Term2_error} and \eqref{eq:FEM-Term3_error}, since they are entirely analogous.
\end{proof}
\begin{remark}
	We note that (by a straightforward calculation in polar coordinates) the integral \eqref{eq:Bessel-Term} vanishes unless $\alpha=\beta$. In fact, one has
	\begin{align*}
		\int_{\cO} \overline {E_\beta}f_\alpha\,dx = \f1{RJ_\alpha(kR)} \delta_{\alpha\beta}\int_{R-1}^R \rho(r) \big[2\rho'(r)(krJ_{\alpha-1}(kr) - \alpha J_{\alpha}(kr)) + r\Delta\rho(r) J_\alpha(kr)\big]\,dr,
	\end{align*}
	where $\delta_{\alpha\beta}$ denotes the Kronecker symbol. This one-dimensional integral can easily be approximated by a Riemann sum. While this fact is irrelevant to our theoretical treatment, it may improve performance and precision of numerical implementations.
\end{remark}
Let us now go back to \eqref{eq:matrix_elements}. Prop \ref{prop:FEM_numerical_integrals} allows us to define an approximate $n\times n$ matrix $\mathcal K_h$ by
\begin{align}\label{eq:K_h_def}
	(\mathcal K_h)_{\alpha\beta} := \int_{\cO} (\Pi^h\nabla \overline {E_\beta})\cdot\nabla v_\alpha^h \,dx  - k^2\int_{\cO} (\Pi^h\overline {E_\beta})v_\alpha^h \,dx - \int_{\cO} (\Pi^h\overline {E_\beta})(\Pi^hf_\alpha)\,dx,
\end{align}
whose matrix elements satisfy the error estimate
\begin{align}\label{eq:matrix_el_est1}
	|\mathcal K_{\alpha\beta} - (\mathcal K_h)_{\alpha\beta}| &\leq C(k) \beta^2\Big(  h^{\f13}  \|f_\alpha\|_{L^2(\cO)} + h^2\| f_\alpha\|_{H^2(\cO)}\Big).
\end{align}
The norms on the right hand side can be estimated by explicit calculation. One has
\begin{align*}
	\|f_\alpha\|_{H^1(\cO)} &\leq C|\alpha| 
	\\
	\|f_\alpha\|_{H^2(\cO)}  &\leq C|\alpha|^3
\end{align*}
Plugging these bounds into \eqref{eq:matrix_el_est1}, we finally arrive at
\begin{align}\label{eq:matrix_el_est2}
	|\mathcal K_{\alpha\beta} - (\mathcal K_h)_{\alpha\beta}| &\leq C(k) \Big(  h^{\f13}  \beta^2|\alpha| + h^2\beta^2|\alpha|^3 \Big).
\end{align}
We summarize these results in the following
\begin{theorem}\label{th:K-K_h_norm_estimate}
	For any $n\in\N$ one has the operator norm error estimate\footnote{
		We are abusing notation here, by identifying $\mathcal K_h$ with $\mathcal K_h\oplus 0_{\operatorname{span}\{e_1,\dots,e_n\}^\perp}$
	}
	\begin{align*}
		\|P_n\mathcal KP_n-\mathcal K_h\|_{L(\h)} &\leq C(k)(  h^{\f13}  n^3+h^2n^5),
	\end{align*}
	where $C(k)$ is independent of $n,h$ and bounded for $k$ in a compact set disjoint from the real line.
\end{theorem}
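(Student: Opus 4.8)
The plan is to pass from the entrywise estimate \eqref{eq:matrix_el_est2} to an operator-norm bound on the $n\times n$ block by crudely bounding the operator norm by (a multiple of) the sum of the absolute values of the entries, and then summing the geometric-type bounds over the $(2n+1)^2$ indices $|\alpha|,|\beta|\leq n$. First I would recall that for any matrix $A=(a_{\alpha\beta})$ acting on the span of $\{e_{-n},\dots,e_n\}$ one has the elementary bound $\|A\|_{L(\h)}\le\big(\sum_{\alpha,\beta}|a_{\alpha\beta}|^2\big)^{1/2}$ (the Hilbert--Schmidt bound), or even more simply $\|A\|\le\sum_{\alpha,\beta}|a_{\alpha\beta}|$. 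Applying this with $a_{\alpha\beta}=\mathcal K_{\alpha\beta}-(\mathcal K_h)_{\alpha\beta}$ and using \eqref{eq:matrix_el_est2} gives
\begin{align*}
	\|P_n\mathcal K P_n-\mathcal K_h\|_{L(\h)} \le \sum_{|\alpha|\le n}\sum_{|\beta|\le n} C(k)\big(h^{\f13}\beta^2|\alpha| + h^2\beta^2|\alpha|^3\big).
\end{align*}

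Next I would carry out the double sum. Using $\sum_{|\beta|\le n}\beta^2 \le C n^3$, $\sum_{|\alpha|\le n}|\alpha|\le Cn^2$ and $\sum_{|\alpha|\le n}|\alpha|^3\le Cn^4$, the first term contributes $C(k)h^{\f13} n^3\cdot n^2 = C(k)h^{\f13}n^5$ and the second contributes $C(k)h^2 n^3\cdot n^4 = C(k)h^2 n^7$. This is slightly worse than the claimed $h^{\f13}n^3+h^2n^5$, so to land exactly on the stated powers I would instead invoke the Hilbert--Schmidt bound: $\|A\|^2\le\sum_{\alpha,\beta}|a_{\alpha\beta}|^2$. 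Squaring \eqref{eq:matrix_el_est2} and summing, $\sum_{|\alpha|,|\beta|\le n}(h^{\f13}\beta^2|\alpha|)^2 \le h^{\f23}\big(\sum_{|\beta|\le n}\beta^4\big)\big(\sum_{|\alpha|\le n}\alpha^2\big)\le C h^{\f23} n^5\cdot n^3 = Ch^{\f23}n^8$ and similarly $\sum(h^2\beta^2|\alpha|^3)^2\le Ch^4 n^5 n^7 = Ch^4 n^{12}$; taking square roots and using $(a+b)^{1/2}\le a^{1/2}+b^{1/2}$ yields $\|P_n\mathcal K P_n-\mathcal K_h\|\le C(k)(h^{\f13}n^4+h^2 n^6)$. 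Still off by one power in $n$. To recover the sharp exponents $n^3$ and $n^5$ one must exploit the diagonal structure noted in the remark after Proposition \ref{prop:FEM_numerical_integrals}: the term $I_3$ (and in fact the entire matrix $\mathcal K$, by a polar-coordinates computation analogous to \eqref{eq:explicit_extension}) is \emph{diagonal} in the basis $\{e_n\}$, so $\mathcal K_{\alpha\beta}-(\mathcal K_h)_{\alpha\beta}$ vanishes unless $\alpha=\beta$ (up to the controlled quadrature error, which one absorbs). With $\alpha=\beta$ the bound \eqref{eq:matrix_el_est2} becomes $C(k)(h^{\f13}|\alpha|^3+h^2|\alpha|^5)$, and for a diagonal operator the operator norm is the maximum over $|\alpha|\le n$, which is exactly $C(k)(h^{\f13}n^3+h^2n^5)$.

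The main obstacle is therefore not the summation but correctly accounting for the off-diagonal contributions: one has to verify that the finite-element approximation $\mathcal K_h$ preserves (or nearly preserves) the diagonal structure of $\mathcal K$, i.e. that the off-diagonal entries $(\mathcal K_h)_{\alpha\beta}$ for $\alpha\ne\beta$ are themselves $O(h^{\f13}\cdot\text{poly})$ small, so that the off-diagonal part of $\mathcal K_h$ can be folded into the error term without spoiling the exponents. This follows because the exact off-diagonal matrix elements $\mathcal K_{\alpha\beta}$ vanish (by orthogonality of the angular modes $e^{\i\alpha\theta}$ on the rotationally symmetric annulus geometry of the cutoff $\rho$ and on $\partial B_R$) while the triangulated quadrature introduces only an $O(h^{\f13}\text{poly}(\alpha,\beta))$ discrepancy, exactly as quantified in Proposition \ref{prop:FEM_numerical_integrals}; the remaining bookkeeping is then the elementary sum above specialized to $\alpha=\beta$. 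I would also remark that the footnote's identification of $\mathcal K_h$ with $\mathcal K_h\oplus 0$ on $\operatorname{span}\{e_1,\dots,e_n\}^\perp$ is what makes the left-hand side a genuine operator on $\h$, and that $C(k)$ inherits its boundedness on compact subsets of $\C^-$ from the corresponding property of the constant in \eqref{eq:matrix_el_est2}, which in turn traces back to the resolvent $(\HD-k^2)^{-1}$ being bounded there.
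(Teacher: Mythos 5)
Your preliminary computations are correct, and you have correctly diagnosed that the $\ell^1$ bound ($h^{1/3}n^5+h^2n^7$) and the Hilbert--Schmidt bound ($h^{1/3}n^4+h^2n^6$) each fall short of the stated exponents. For what it is worth, the paper's own proof applies a Schur-type test (it calls it the ``generalized Young inequality''), bounding $\|\cdot\|_{L(\h)}$ by the maximum of the row- and column-sums of the absolute values of the entries; feeding \eqref{eq:matrix_el_est2} into this also produces $h^{1/3}n^4+h^2n^6$ (each row or column sum acquires an extra factor $\sim n$), so the factor-of-$n$ discrepancy you noticed is present in the paper's own proof as well. It is inconsequential there since the discretization parameter is chosen as $h\sim n^{-11}$ immediately afterwards, but your instinct that the crude $\ell^1$/$\ell^2$ summations do not directly give $n^3$ and $n^5$ is sound.

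The serious problem is the step you then invoke to recover the exponents $n^3$ and $n^5$. You assert that $\mathcal K$ is diagonal in the basis $\{e_n\}$, and cite the remark after Proposition~\ref{prop:FEM_numerical_integrals}. That remark only asserts diagonality of the third integral $I_3=\int_\cO\overline{E_\beta}f_\alpha\,dx$: there both factors have separated polar form $(\text{radial})\times e^{\i\,(\cdot)\,\theta}$ and are supported in the rotationally symmetric annulus $B_R\setminus B_{R-1}$, so the angular integral annihilates the off-diagonal terms. The integrals $I_1$ and $I_2$ involve $v_\alpha=(\HD-k^2)^{-1}f_\alpha$, which solves a Dirichlet problem on the \emph{non-rotationally-symmetric} domain $\cO=B_R\setminus\overline U$. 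Although the source $f_\alpha$ has pure angular dependence $e^{\i\alpha\theta}$, the irregular boundary $\partial U$ couples angular modes, and the restriction of $v_\alpha$ to the annulus is not of the form $(\text{radial})\times e^{\i\alpha\theta}$. Indeed, $\mathcal K$ is exactly the $U$-dependent correction $M_{\mathrm{inner}}-M_{\mathrm{inner},0}$ to the free inner DtN map; if it were diagonal, $M_{\mathrm{inner}}+M_{\mathrm{outer}}$ would be diagonal and the entire resonance problem would reduce to separation of variables with no finite element method required. Your argument therefore only applies to a disk resonator centered at the origin. Consequently the diagonal reduction you rely on is unavailable, and the estimate must instead be obtained by the paper's Schur/Young-inequality summation (accepting the resulting exponents, and absorbing the harmless discrepancy into the choice of $h\sim n^{-11}$).
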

\begin{proof}
	This follows from the generalized Young inequality:
	\begin{align*}
		\|(\mathcal K-\mathcal K_h)u\|_{L^2} &\leq \max\Bigg\{ \sup_{|\alpha|\leq n}\,\sum_{\beta=-n}^n |\mathcal K_{\alpha\beta}-(\mathcal K_h)_{\alpha\beta}| \,,\,\sup_{|\beta|\leq n}\,\sum_{\alpha=-n}^n |\mathcal K_{\alpha\beta}-(\mathcal K_h)_{\alpha\beta}| \Bigg\}\cdot \|u\|_{L^2}.
	\end{align*}
	Since $|\mathcal K_{\alpha\beta}-(\mathcal K_h)_{\alpha\beta}|\leq C(  h^{\f13}  n^3+h^2n^5)$, by  \eqref{eq:matrix_el_est2}, this immediately implies the assertion.
\end{proof}
\begin{remark}
	We emphasize that all terms on the right hand side of \eqref{eq:K_h_def} can be computed from the data provided in \eqref{eq:evaluation_set} in finitely many steps. Indeed, the mass and stiffness matrices $\mathfrak m,\,\mathfrak s$ of the P1 finite element method can be computed exactly and satisfy
	\begin{align}
		\int_{\cO} \phi\psi\,dx &= \sum_{i,j\in L_h}\phi(i)\mathfrak m_{ij}\psi(j)
		\label{eq:mass_matrix}
		\\
		\int_{\cO} \nabla\phi\cdot\nabla\psi\,dx &= \sum_{i,j\in L_h}\phi(i)\mathfrak s_{ij}\psi(j)
		\label{eq:stiffness_matrix}
	\end{align}
	for all $\phi,\psi\in W^h$.
\end{remark}
\section{The algorithm}\label{sec:Algorithm}
\subsection{Proof of Theorem \ref{th:fixed_R}}
From Theorem \ref{th:K-K_h_norm_estimate} (by choosing  $h\sim n^{-11}$) and Lemma \ref{lemma:abstract_projection_estimate}, we obtain a computable approximation $\mathcal K_{h(n)}$ such that
\begin{align}\label{eq:h=n-9}
		\left\| \mathcal N^{-\f12}(\BH+\BJ+\mathcal K)\mathcal N^{-\f12} - P_n\mathcal N^{-\f12}(\BH+\BJ+\mathcal K_{h(n)})\mathcal N^{-\f12}P_n \right\|_{C_p}\leq Cn^{-\f12+\f1p}.
\end{align}
Using the Lipschitz continuity of perturbation determinants (cf. \cite[Th. 6.5]{Simon}) we conclude 
\begin{lemma}\label{lemma:determinant_rate}
Let $k\in\C^-$ and define $A_n^R(k):=\f R2 P_n\mathcal N^{-\f12}\big(\BH(k)+\BJ(k)+\mathcal K_{h(n)}(k)\big)\mathcal N^{-\f12}P_n-P_0$. Then there exists $C>0$, which is independent of $k$ for $k$ in a compact subset of $\C^-$, such that
	\begin{align*}
	\left| \mathrm{det}_{\lceil p\rceil}\left(I+\f R2 \mathcal N^{-\f12}\big(\BH(k)+\BJ(k)+\mathcal K(k)\big)\mathcal N^{-\f12}-P_0\right) - \mathrm{det}_{\lceil p\rceil}\left(I+A_n^R(k)\right) \right|\leq Cn^{-\f12+\f1{\lceil p\rceil}}.
\end{align*}
\end{lemma}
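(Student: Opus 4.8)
The plan is to reduce the estimate to two ingredients: that the two operators appearing in the two determinants are close in the $C_{\lceil p\rceil}$-Schatten norm, and that the regularized determinant $\mathrm{det}_{\lceil p\rceil}$ is locally Lipschitz on $C_{\lceil p\rceil}$. Abbreviate
\begin{align*}
	B(k):=\tfrac R2\,\mathcal{N}^{-\f12}\big(\BH(k)+\BJ(k)+\mathcal{K}(k)\big)\mathcal{N}^{-\f12}-P_0,
\end{align*}
so that the first determinant in the statement is $\mathrm{det}_{\lceil p\rceil}(I+B(k))$ and the second is $\mathrm{det}_{\lceil p\rceil}(I+A_n^R(k))$. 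Since the summand $-P_0$ is common to $B(k)$ and to $A_n^R(k)$, it cancels in the difference and
\begin{align*}
	B(k)-A_n^R(k)=\tfrac R2\Big[\mathcal{N}^{-\f12}(\BH+\BJ+\mathcal{K})\mathcal{N}^{-\f12}-P_n\mathcal{N}^{-\f12}(\BH+\BJ+\mathcal{K}_{h(n)})\mathcal{N}^{-\f12}P_n\Big],
\end{align*}
which is, up to the scalar $R/2$, exactly the operator whose $C_p$-norm is controlled in \eqref{eq:h=n-9}.

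First I would note that the argument producing \eqref{eq:h=n-9} — namely Lemma~\ref{lemma:abstract_projection_estimate} together with Theorem~\ref{th:K-K_h_norm_estimate} and the choice $h\sim n^{-11}$ — is valid for \emph{every} exponent larger than $2$, not merely for one fixed $p$. Running it with $\lceil p\rceil$ in place of $p$ (legitimate since $\lceil p\rceil\geq p>2$) therefore gives
\begin{align*}
	\big\|B(k)-A_n^R(k)\big\|_{C_{\lceil p\rceil}}\leq C\,n^{-\f12+\f1{\lceil p\rceil}},
\end{align*}
which already carries the exponent of the statement; one cannot shortcut this via the inclusion $\|\cdot\|_{C_{\lceil p\rceil}}\le\|\cdot\|_{C_p}$, since that inequality degrades the rate in the wrong direction. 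Next I would record a uniform $C_{\lceil p\rceil}$-bound on the two arguments: by Hölder's inequality for Schatten norms, $\|\mathcal{N}^{-\f12}X\mathcal{N}^{-\f12}\|_{C_{\lceil p\rceil}}\leq\|\mathcal{N}^{-\f12}\|_{C_{2\lceil p\rceil}}^2\|X\|_{\mathcal L(\h)}$, where $\mathcal{N}^{-\f12}\in C_{2\lceil p\rceil}$ because its singular values decay like $|n|^{-\f12}$; combined with the uniform boundedness of $\|\BH(k)\|,\|\BJ(k)\|,\|\mathcal{K}(k)\|$, the bound $\|\mathcal{K}_{h(n)}(k)\|_{\mathcal L}\le\|\mathcal{K}(k)\|_{\mathcal L}+o(1)$ that follows from Theorem~\ref{th:K-K_h_norm_estimate} with $h\sim n^{-11}$, and the fact that compressing by $P_n$ or adding $P_0$ does not increase Schatten norms, this yields a constant $M$ with $\|B(k)\|_{C_{\lceil p\rceil}},\|A_n^R(k)\|_{C_{\lceil p\rceil}}\leq M$ for all $n$ and all $k$ in the prescribed compact set.

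The last step is to invoke the Lipschitz continuity of the regularized determinant \cite[Th.~6.5]{Simon}: there is $\Gamma_{\lceil p\rceil}>0$ with
\begin{align*}
	\big|\mathrm{det}_{\lceil p\rceil}(I+X)-\mathrm{det}_{\lceil p\rceil}(I+Y)\big|\leq\|X-Y\|_{C_{\lceil p\rceil}}\exp\!\Big(\Gamma_{\lceil p\rceil}\big(1+\|X\|_{C_{\lceil p\rceil}}+\|Y\|_{C_{\lceil p\rceil}}\big)^{\lceil p\rceil}\Big)
\end{align*}
for all $X,Y\in C_{\lceil p\rceil}$; applying this with $X=B(k)$, $Y=A_n^R(k)$ and feeding in the previous two displays gives the claim with a constant of the shape $C\,e^{\Gamma_{\lceil p\rceil}(1+2M)^{\lceil p\rceil}}$. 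I expect the only point requiring genuine care to be the \emph{uniformity in $k$} of all constants over a compact subset of $\C^-$: this rests on the locally uniform elliptic-regularity and resolvent bounds already flagged in the discussion following Proposition~\ref{prop:FEM_estimate} (which use that $\sigma(\HD)$ is discrete and positive, hence separated from any compact subset of $\C^-$), and on the observation that the multiplicative factor in Simon's inequality is controlled by the \emph{uniform} bound $M$ rather than by any quantity that could blow up as $n\to\infty$ or as $k$ approaches the real axis within the compact set.
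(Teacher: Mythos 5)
Your proof is correct and follows exactly the path the paper itself sketches (the paper gives no displayed proof for Lemma~\ref{lemma:determinant_rate}, merely citing \eqref{eq:h=n-9} together with the Lipschitz continuity of regularized determinants from Simon). Your two supplementary observations — that Lemma~\ref{lemma:abstract_projection_estimate} and Theorem~\ref{th:K-K_h_norm_estimate} must be re-run with exponent $\lceil p\rceil$ rather than merely importing the $C_p$ estimate via the embedding $C_p\hookrightarrow C_{\lceil p\rceil}$, and that a uniform $C_{\lceil p\rceil}$-bound on both arguments is needed to control the exponential prefactor in Simon's inequality — are precisely the details the paper leaves implicit, and you handle them correctly.
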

Fix a non-empty compact set  $Q$ in $\C^-$ and define the grid $G_n=\f1n(\Z+\i\Z)$. Because $\det_{\lceil p\rceil}(\cdots)$ is analytic in $k$, it can only have finitely many zeros in $Q$ and all are of finite order. That is, near a zero $k_0$ one has
\begin{align}\label{eq:analytic_decay}
	\left|\mathrm{det}_{\lceil p\rceil}\left(I+\f R2\mathcal N^{-\f12}\big(\BH(k)+\BJ(k)+\mathcal K(k)\big)\mathcal N^{-\f12}-P_0\right)\right| \leq \tilde C|k-k_0|^\nu
\end{align}
for some $\tilde C,\nu>0$ and for all $k$ in a sufficiently small neighborhood of $k_0$. 
Next, we define the $R$ and $Q$ dependent algorithm by
\begin{align}
	\Gamma_n^{Q,R} &: \Omega_R \to \operatorname{cl}(\C)\nonumber
	\\
	\Gamma_n^{Q,R}(U) &:=\left\{ k\in G_n\cap Q\,\bigg|\,\left|\det\nolimits_{\lceil p\rceil}\big(I+A_n^R(k)\big)\right|\leq \f{1}{\log(n)} \right\}.
	\label{eq:GammaQR}
\end{align}
\begin{theorem}\label{th:local_res_conv}
	For any $U\in\Omega_R$, we have that $\Gamma_n^{Q,R}(U)\to\Res(U)\cap Q$ in Hausdorff distance as $n\to+\infty$.
\end{theorem}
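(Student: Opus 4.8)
The plan is to establish Hausdorff convergence $\Gamma_n^{Q,R}(U)\to\Res(U)\cap Q$ by verifying the two one-sided inclusions that define the Hausdorff distance: (a) every point in $\Gamma_n^{Q,R}(U)$ is eventually close to $\Res(U)\cap Q$, and (b) every point of $\Res(U)\cap Q$ is eventually close to $\Gamma_n^{Q,R}(U)$. Throughout I will abbreviate $D(k):=\det_{\lceil p\rceil}(I+\tfrac R2\mathcal N^{-1/2}(\BH(k)+\BJ(k)+\mathcal K(k))\mathcal N^{-1/2}-P_0)$ and $D_n(k):=\det_{\lceil p\rceil}(I+A_n^R(k))$. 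The key analytic facts I will use are: $D$ is analytic on $\C^-$ and, by the discussion preceding Definition \ref{def:res} together with the manipulations in Subsection \ref{sec:approximation_procedure}, its zero set in $\C^-$ equals $\Res(U)$; by Lemma \ref{lemma:determinant_rate}, $\sup_{k\in Q}|D(k)-D_n(k)|\leq Cn^{-1/2+1/\lceil p\rceil}\to 0$; and the local vanishing rate \eqref{eq:analytic_decay} near each zero. One small preliminary point: one must handle the possibility that $D\equiv 0$ on a component of $\C^-$, but this cannot happen since $D_n$ converges uniformly to $D$ and, e.g., $D(k)\to 1$ as $\im k\to-\infty$ (the operators are bounded and $\mathcal N^{-1/2}$-sandwiched, with $\BH,\BJ,\mathcal K$ decaying appropriately), so $D$ is a nontrivial analytic function with at most finitely many zeros in $Q$, all of finite order.

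For inclusion (a), suppose $k_n\in\Gamma_n^{Q,R}(U)$, i.e. $k_n\in G_n\cap Q$ with $|D_n(k_n)|\leq 1/\log n$. Then $|D(k_n)|\leq |D_n(k_n)|+\sup_{Q}|D-D_n|\leq 1/\log n + Cn^{-1/2+1/\lceil p\rceil}\to 0$. If $\dist(k_n,\Res(U)\cap Q)$ did not go to zero, then by compactness of $Q$ a subsequence of $(k_n)$ would converge to some $k_*\in Q$ with $D(k_*)=0$ (by continuity) but $k_*\notin\Res(U)\cap Q$; since the zeros of $D$ in $\C^-$ are exactly $\Res(U)$, the only way out is $k_*\in\partial\C^-=\R$ — but $Q\subset\C^-$ is compact, hence separated from $\R$, a contradiction. (Here one also uses that $\Res(U)$ has no accumulation points in $Q$, so $\Res(U)\cap Q$ is closed and the distance function is well-behaved.) Thus $\sup_{k\in\Gamma_n^{Q,R}(U)}\dist(k,\Res(U)\cap Q)\to 0$.

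For inclusion (b), fix $k_0\in\Res(U)\cap Q$, so $D(k_0)=0$ of some finite order $\nu\geq 1$ with the bound \eqref{eq:analytic_decay} valid on a ball $B_\delta(k_0)$. Given $\eps>0$ small (so that $B_\eps(k_0)\subset B_\delta(k_0)$ and $B_\eps(k_0)$ contains no other zero of $D$), choose a grid point $k_n\in G_n$ with $|k_n-k_0|\leq \sqrt2/n$; for $n$ large this lies in $B_\eps(k_0)\cap Q$. Then $|D(k_n)|\leq \tilde C(\sqrt2/n)^\nu$, hence $|D_n(k_n)|\leq \tilde C(\sqrt2/n)^\nu + Cn^{-1/2+1/\lceil p\rceil}$. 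The right-hand side is eventually $\leq 1/\log n$ because $\log n$ grows slower than any positive power of $n$ — this is precisely why the threshold $1/\log n$ (rather than a fixed power of $n$) is chosen. Therefore $k_n\in\Gamma_n^{Q,R}(U)$ and $\dist(k_0,\Gamma_n^{Q,R}(U))\leq |k_n-k_0|\leq\sqrt2/n\to 0$; taking the max over the finitely many points $k_0\in\Res(U)\cap Q$ gives $\sup_{k_0\in\Res(U)\cap Q}\dist(k_0,\Gamma_n^{Q,R}(U))\to 0$. Combining (a) and (b) yields $d_\mathrm H(\Gamma_n^{Q,R}(U),\Res(U)\cap Q)\to 0$.

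The main obstacle is the interplay of the three competing scales in inclusion (b): the distance $\lesssim 1/n$ from a grid point to a true zero, the local vanishing rate raising this to $\lesssim n^{-\nu}$ (worst when $\nu=1$), the convergence rate $n^{-1/2+1/\lceil p\rceil}$ of $D_n$ to $D$ (which for $p$ close to $2$ is barely better than $n^{-0}$ and could in principle be slower than $n^{-1}$), and the threshold $1/\log n$. The point is that $1/\log n$ dominates \emph{all} negative powers of $n$ asymptotically, so the scheme works regardless of the exact exponents — but one must present this comparison carefully, and also make sure in inclusion (a) that the threshold does not shrink so slowly that spurious points survive: that direction is safe because there $|D(k_n)|\to 0$ forces proximity to the zero set purely by continuity and compactness, with no rate needed. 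A secondary technical point worth spelling out is uniformity: the constants $\tilde C,\nu,\delta$ depend on the zero $k_0$, but since there are only finitely many zeros in $Q$ one may take a common $\delta$ and a common threshold index $N(\eps)$.
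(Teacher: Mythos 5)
Your proposal is correct and follows essentially the same approach as the paper's own proof: both establish the two one-sided Hausdorff inclusions, invoking Lemma \ref{lemma:determinant_rate} for the determinant error bound, the local vanishing estimate \eqref{eq:analytic_decay} near each zero, and the observation that $1/\log n$ dominates all negative powers of $n$ so that grid points near zeros pass the threshold. The only difference is cosmetic: the paper treats the case $\Res(U)\cap Q=\emptyset$ as a separate explicit case (showing $\Gamma_n^{Q,R}(U)$ is eventually empty because $|D|$ is bounded away from zero on $Q$), whereas your inclusion (a) covers it implicitly but would benefit from spelling it out, since your contradiction argument as written presupposes $\Res(U)\cap Q\neq\emptyset$ to make sense of $\dist(k_n,\Res(U)\cap Q)$.
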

\begin{remark}[The Hausdorff distance for empty sets]
We note that since $Q$ is arbitrary, it may very well be the case that $\Res(U)\cap Q=\emptyset$, and then one needs to be cautious when using the Hausdorff distance. In this case we employ the following conventions for the Hausdorff distance:
 	\begin{align*}
	&d_{\mathrm{H}}(A,\emptyset)=+\infty,\quad\text{if }A\neq\emptyset,\\
	&d_{\mathrm{H}}(\emptyset,\emptyset)=0.
	\end{align*}
 \end{remark}
\begin{proof}[Proof of Theorem \ref{th:local_res_conv}]
First we treat the case $\Res(U)\cap Q\neq\emptyset$. Let us write $A^R(k):=\f R2 \mathcal N^{-\f12}\big(\BH(k)+\BJ(k)+\mathcal K(k)\big)\mathcal N^{-\f12}-P_0$ for notational convenience.
The proof consists of two steps. First we prove that any convergent sequence $k_n\in\Gamma_n^{Q,R}(U)$ necessarily converges to a zero of $\det_{\lceil p\rceil}(I+A^R)$, and second we prove that for every zero $k_0$ of $\det_{\lceil p\rceil}(I+A^R)$ there exists a sequence $k_n\in\Gamma_n^{Q,R}(U)$ converging to $k_0$. Together, these two facts imply Hausdorff convergence. 

Let us begin with step 1. Assume that $k_n\in\Gamma_n^{Q,R}(U)$  converges to some $\tilde k\in\C$. By the definition of $\Gamma_n^{Q,R}$, we have $|\det(I+A_n^R(k_n))| \leq \f{1}{\log(n)}$. Hence, by Lemma \ref{lemma:determinant_rate} we have that
\begin{align*}
	|\det\nolimits_{\lceil p\rceil}(I+A^R(k_n))| &\leq \f{1}{\log(n)} + Cn^{-\f12+\f1{\lceil p\rceil}}
	\to 0
\end{align*}
as $n\to+\infty$. Hence we have
\begin{align*}
	\det\nolimits_{\lceil p\rceil}(I+A^R(\tilde k)) = \lim_{n\to+\infty}\det(I+A^R(k_n)) = 0.
\end{align*}
Conversely, suppose that $\det(I+A^R(k_0))=0$ for some $k_0\in Q$. Then there exists a sequence $k_n\in G_n\cap Q$ such that $|k_0-k_n|\leq \f1n$. Hence, by \eqref{eq:analytic_decay} we have
\begin{align*}
	|\det\nolimits_{\lceil p\rceil}(I+A^R(k_n))|\leq \tilde C\f{1}{n^\nu}
\end{align*}
and employing Lemma \ref{lemma:determinant_rate} again, we obtain
\begin{align}\label{eq:1/n+Cn}
	|\det\nolimits_{\lceil p\rceil}(I+A_n^R(k_n))|\leq \tilde C\f{1}{n^\nu} + Cn^{-\f12+\f1{\lceil p\rceil}}.
\end{align}
Clearly, for large enough $n$, the right hand side of \eqref{eq:1/n+Cn} will be less than $\f1{\log(n)}$, hence $k_n\in\Gamma_n^{Q,R}(U)$ for $n$ large enough. This completes the proof for the case $\Res(U)\cap Q\neq\emptyset$.

If $\Res(U)\cap Q=\emptyset$ then $\det(I+A^R(k_0))$ is bounded uniformly away from $0$ in $Q$. Hence for all $n$ large enough, the condition  \eqref{eq:GammaQR} for points to belong to $\Gamma_n^{Q,R}(U)$ will not be fulfilled and therefore $\Gamma_n^{Q,R}(U)=\emptyset$. Our convention regarding the Hausdorff distance for empty sets then leads to $d_{\mathrm{H}}(\Res(U)\cap Q,\Gamma_n^{Q,R}(U))=0$ for all $n$ large enough.
\end{proof}
It remains to extend our argument from a single compact set $Q$ to the entire complex plane. This is done via a diagonal-type argument. We choose a tiling of $\C^-$, where we start with a rectangle $Q_1=\big\{z\in\C^-\,\big|\,|\re(z)|\leq\f12,\,|\im(z)+\f32|\leq\f12 \big\}$ and then add rectangles in a counterclockwise spiral manner as shown in Figure \ref{fig:tiling}. Note again that each individual rectangle $Q_i$ is well separated from $\{k\,|\,k^2\in\sigma(H_{\mathrm{D}})\}\subset\R$.
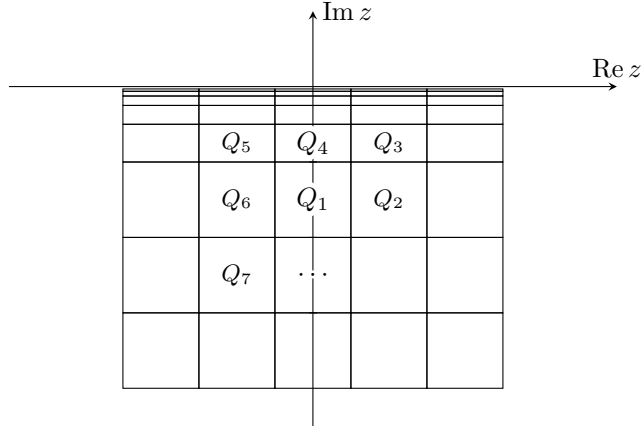
\begin{figure}[htbp]
	\centering
	\contourlength{2pt}
	\begin{tikzpicture}[>=stealth]
		\draw[-stealth] (-4,0)--(4,0) node[above]{$\mathrm{Re}\, z$};
		\draw[-stealth] (0,-4.5)--(0,1) node[right]{$\mathrm{Im}\, z$};
		\draw   (-0.05,-1)--(0.05,-1);
		\draw   (-0.05,-2)--(0.05,-2);
		\draw   (-0.05,-3)--(0.05,-3);
		\foreach \y in {0,1,2}
		\foreach \x in {-2,-1,0,1,2} {
			\draw (-0.5+\x,-2-\y) rectangle (0.5+\x,-1-\y);
		}
		\foreach \x in {-2,-1,0,1,2} {
			\draw (-0.5+\x,-1) rectangle (0.5+\x,-0.5);
			\draw (-0.5+\x,-0.5) rectangle (0.5+\x,-0.25);
			\draw (-0.5+\x,-0.25) rectangle (0.5+\x,-0.125);
			\draw (-0.5+\x,-0.125) rectangle (0.5+\x,-0.0625);
			\draw (-0.5+\x,-0.0625) rectangle (0.5+\x,-0.03125);
		}
		\node at (0,-1.5) {\contour{white}{$Q_1$}};
		\node at (1,-1.5) {\small$Q_2$};
		\node at (1,-0.75) {\small$Q_3$};
		\node at (0,-0.75) {\contour{white}{$Q_4$}};
		\node at (-1,-0.75) {\small$Q_5$};
		\node at (-1,-1.5) {\small$Q_6$};
		\node at (-1,-2.5) {\small$Q_7$};
		\contourlength{1pt}
		\node at (0.03,-2.5) {\contour{white}{$\cdots$}};
	\end{tikzpicture}
	\caption{Tiling of the lower half plane}
	\label{fig:tiling}
\end{figure}
Next, we define our algorithm as follows. We let
\begin{align*}
	\Gamma_1^R(U) &:= \Gamma_1^{Q_1,R}(U) \\
	\Gamma_2^R(U) &:= \Gamma_2^{Q_1,R}(U) \cup \Gamma_2^{Q_2,R}(U) \\
	\Gamma_3^R(U) &:= \Gamma_3^{Q_1,R}(U) \cup \Gamma_3^{Q_2,R}(U) \cup \Gamma_3^{Q_3,R}(U) \\
	&\vdots\\
	\Gamma_n^R(U) &:= \bigcup_{j=1}^n \Gamma_n^{Q_j,R}(U).
\end{align*}
Theorem \ref{th:local_res_conv} ensures that $\Gamma_n^R(U)\cap B\to \Res(U)\cap B$ as $n\to+\infty$ in Hausdorff sense for each compact subset $B$ of $\C^-$. The proof of Theorem \ref{th:fixed_R} now follows from the characterization of the Attouch-Wets metric appearing in \eqref{eq:Attouch-Wets}.

\subsection{Proof of Theorem \ref{th:mainth}}
It remains to extend the above algorithm from $\Omega_R$ to $\Omega$.
In this section we will define a new algorithm, $\Gamma_n$, based on the algorithm $\Gamma_n^R$ constructed above. The main difficulty consists in ``locating $U$'' by testing only finitely many points.

The algorithm below uses two radii, $R$ and $r$. $R$ is used for running $\Gamma_n^R$, while $r$ is successively increased to search for hidden components of $U$. The following pseudocode gives the definition of $\Gamma_n$.

\medskip
\begin{algorithm}[H]
\SetAlgoLined
 Let $U\in\Omega$ and initialise $R=r=1$\;
 \For{$n\in\N$}{
 	Consider the lattice $L_n = n^{-11}\Z^2\cap B_r$ (recall the choice $h\sim n^{-11}$ before eq. \eqref{eq:h=n-9}). For every $j\in L_n$, test whether $j\in U$. For all $j\in L_n \cap U$, compute $|j|$\;
	\eIf{$|j|\leq R-1$ for all $j\in L_n \cap U$,}{
		 define $\Gamma_n(U):=\Gamma_n^R(U\cap B_{R-1})$, increment $r$ by 1 and proceed to $n+1$\;
	}
	{
		increment $r$ by 1, set $R:=r$ and repeat the current step\;
	}
 }
 \caption{Remove $R$ dependence}\label{Alg:0}
\end{algorithm}

\medskip
This process generates increasing sequences, that we denote by $R_n$ and $r_n$, and defines an algorithm $\Gamma_n:\Omega\to\operatorname{cl}(\C)$. Note that $r_n$ diverges to $+\infty$, because it gets incremented by at least 1 in every step. 
\begin{lemma}\label{lemma:R_n_eventually_constant}
	The sequence $\{R_n\}_{n\in\N}$ is eventually constant and if $N>0$ is such that $R_n=R_N$ for all $n>N$, one has $U\subset B_{R_N-1}$
\end{lemma}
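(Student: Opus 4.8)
The plan is to observe that $U$ is bounded, so there exists some radius $\rho_U < \infty$ with $\overline{U} \subset B_{\rho_U}$. Since the sequence $\{r_n\}_{n \in \N}$ diverges to $+\infty$ (as noted, it is incremented by at least $1$ in every step), there is a first index $n_0$ with $r_{n_0} > \rho_U + 1$; in fact $r_n > \rho_U+1$ for all $n \geq n_0$. The key point is then that for $n \geq n_0$ the lattice $L_n = n^{-11}\Z^2 \cap B_{r_n}$ contains \emph{every} point of $U$ that matters, in the sense that $U \subset B_{r_n - 1}$, so the test ``$|j| \leq R-1$ for all $j \in L_n \cap U$'' correctly detects whether the currently stored radius $R$ is large enough to contain $U$.

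The argument then splits according to whether the ``if'' branch or the ``else'' branch is taken at step $n \geq n_0$. In the ``else'' branch, $R$ is set equal to $r_n$, which is $> \rho_U + 1$, hence $R - 1 > \rho_U$ and $\overline{U} \subset B_{R-1}$; moreover $r$ is then incremented and the step is \emph{repeated} with the same (now sufficiently large) $R$, so at the repeated execution the condition $|j| \leq R-1$ holds for all $j \in L_n \cap U$ (indeed for all $j \in U$), and the ``if'' branch is taken without changing $R$. Once this happens, I claim $R$ is never changed again: at every subsequent step $n' > n$ we still have $\overline U \subset B_{R-1}$, so the condition $|j| \leq R-1$ is satisfied for all $j \in L_{n'} \cap U$, the ``if'' branch is always taken, and $R$ stays fixed. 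If instead the ``if'' branch was already being taken for all $n \geq n_0$ with the value of $R$ inherited from before, then $R$ has not changed since step $n_0 - 1$ and never will, by the same reasoning. Either way, $\{R_n\}$ is eventually constant; let $N$ be such that $R_n = R_N =: R_\infty$ for all $n > N$, chosen also with $N \geq n_0$.

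It remains to show $U \subset B_{R_N - 1}$. Suppose for contradiction that $U \not\subset B_{R_\infty - 1}$, i.e. there exists a point $x \in U$ with $|x| > R_\infty - 1$. Since $U$ is open, $U$ contains a small ball around $x$, and since $r_n \to \infty$ and $n^{-11} \to 0$, for $n$ large enough the lattice $L_n = n^{-11}\Z^2 \cap B_{r_n}$ meets this ball; pick such an $n > N$ and a lattice point $j \in L_n \cap U$ with $|j| > R_\infty - 1$. But then at step $n$ the test ``$|j| \leq R-1$ for all $j \in L_n \cap U$'' fails (with $R = R_\infty$), so the ``else'' branch is executed, which sets $R := r_n \neq R_\infty$ — contradicting $R_n = R_\infty$ for all $n > N$. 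Hence $U \subset B_{R_\infty - 1} = B_{R_N - 1}$.

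I expect the main subtlety to be the bookkeeping around the ``repeat the current step'' instruction in the else-branch: one must be careful that within a single value of the outer loop index $n$ the step may be executed several times (each time incrementing $r$), and that the value $R_n$ recorded is the one in force when the step finally terminates via the if-branch. Once that is set up cleanly, the fact that $r_n \to \infty$ forces the else-branch to produce an $R$ exceeding $\rho_U + 1$ after finitely many steps, and the monotone/absorbing nature of the condition $\overline U \subset B_{R-1}$ does the rest; the contradiction argument for the second assertion is then routine given that $U$ is open and the lattices become arbitrarily fine and large.
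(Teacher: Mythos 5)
Your proposal is correct and follows essentially the same strategy as the paper: boundedness of $U$ (via $r_n\to\infty$) forces the else-branch to stop being triggered after finitely many steps, and the second assertion is established by contradiction using the openness of $U$ together with the fact that $n^{-11}\to 0$ and $r_n\to\infty$ guarantee an eventual lattice point in $U\setminus\overline{B}_{R_N-1}$. You spell out the branch bookkeeping in more detail than the paper (which simply states that eventual constancy ``follows immediately from the boundedness of $U$''), but the underlying reasoning is the same.
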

\begin{proof}
	The fact that $\{R_n\}_{n\in\N}$ is eventually constant follows immediately from the boundedness of $U$ and the above pseudocode. Now let $N\in\N$ be as in the assertion. Assume for contradiction that $U\nsubseteq B_{R_N-1}$. Then, there exists $x\in U$ with $|x|>R_N-1$, and, since $U\setminus \overline{B}_{R_N-1}$ is open, there exists $\eps>0$ such that $B_\eps(x)\subset U\setminus \overline{B}_{R_N-1}$. But then, as soon as $n>N$ is large enough such that $n^{-11}<\eps$ and $r_n>|x|$, there would exist $j\in L_n$ with $j\in B_\eps(x)$. According to the pseudocode, then, $R_n$ would be incremented by 1, contradicting the fact that $R_n=R_N$ for all $n>N$.
\end{proof}
By definition, the output of the algorithm $\Gamma_n$ is $\Gamma_n^{R_n}(U\cap B_{R_n-1})$. Since $R_n$ is eventually constant, there is $N\in\N$ such that $R_n\equiv R_N$ for all $n\geq N$. Hence we have
\begin{align*}
	\lim_{n\to+\infty} \Gamma_n(U) &=
	\lim_{n\to+\infty} \Gamma_n^{R_n}(U\cap B_{R_n-1}) \\
	&= \lim_{n\to+\infty} \Gamma_n^{R_N}(U\cap B_{R_N-1}) \\
	&= \lim_{n\to+\infty} \Gamma_n^{R_N}(U) \\
	&= \mathrm{Res}(U),
\end{align*}
where the second and third lines follow from Lemma \ref{lemma:R_n_eventually_constant} and the last line follows from the proof of Theorem \ref{th:fixed_R}. This completes the proof of Theorem \ref{th:mainth}.

\section{Numerical Results}\label{sec:num}
Although the algorithm described in Sections \ref{sec:Matrix_Elements} and \ref{sec:Algorithm} was never designed for computational efficiency, we  show in this section that it is possible to obtain surprisingly good numerical results with a MATLAB implementation which differs only in two minor respects:
\begin{enuma}
	\item The decomposition $M_\mathrm{inner} = M_{\mathrm{inner},0}+\mathcal K$ obtained in Lemma \ref{lemma:Lambda_inner} introduces artificial singularities at the spectrum of $H_\mathrm{D}$ (the Dirichlet Laplacian introduced in eq. \eqref{eq:H_D_def}), which lead to numerical instabilities. For this reason, rather than approximating the solution $v_\alpha$ of \eqref{eq:v_m-problem}, we directly implemented a finite element approximation of the solution $u$ of \eqref{eq:Lambda_inner_Def}. The matrix elements of $M_\mathrm{inner}$ can then be calculated via Green's formula, similarly to \eqref{eq:matrix_elements}.
	\item For reasons of performance, instead of using the triangulation procedure outlined in Section \ref{sec:FEM}, we use the meshing tool \emph{Distmesh}, cf. \cite{Persson}, to triangulate our domain.
\end{enuma}

\subsection{A circular resonator chamber}
The first geometry we consider is that of a circular resonator chamber, connected to open space by a narrow channel. Figure \ref{fig:circular_mesh} shows the triangulation of the domain $\mathcal{O}=B_R\setminus\overline U$. The specific parameter values in this example are: outer radius of the DtN sphere $R=3$, outer radius of the resonator chamber $r_{\text{outer}}=2$, inner radius of the resonator chamber $r_{\text{inner}}=1.8$, width of the opening $d=1.3$, meshing parameter $h=0.1$.
\begin{figure}[H]
	\centering
	\includegraphics{ 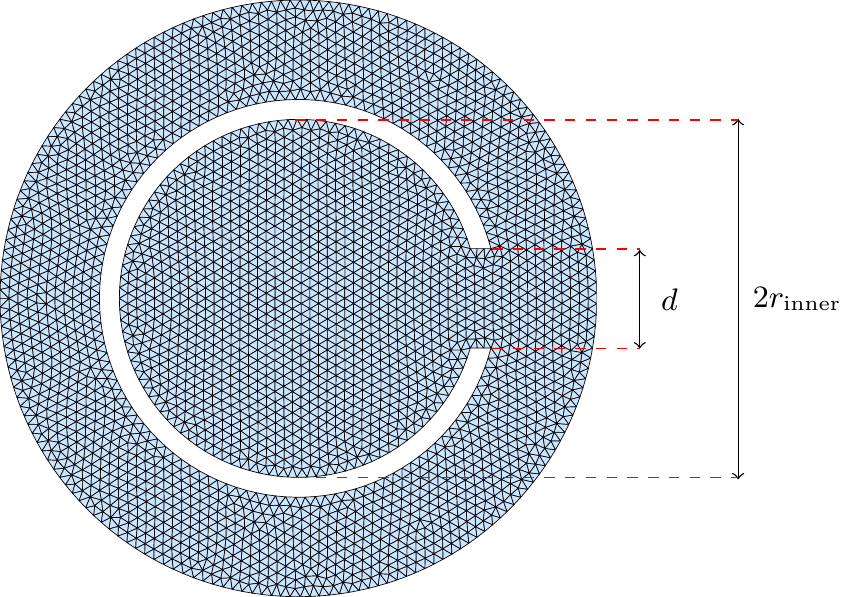}
	\caption{Example triangulation of a circular Helmholtz resonator used in our implementation.}
	\label{fig:circular_mesh}
\end{figure}
Figure \ref{fig:contour_plot} shows a logarithmic contour plot of the computed determinant $|\det(I+A_n(k))|$ in the complex plane for $n=20$. For the sake of comparison we included the Dirichlet eigenvalues of a \emph{closed} resonator chamber in the image (red dots). One can see that next to each of the Dirichlet eigenvalues there is zero of the determinant, as we would expect. 
\begin{figure}[H]
	\centering
	\includegraphics{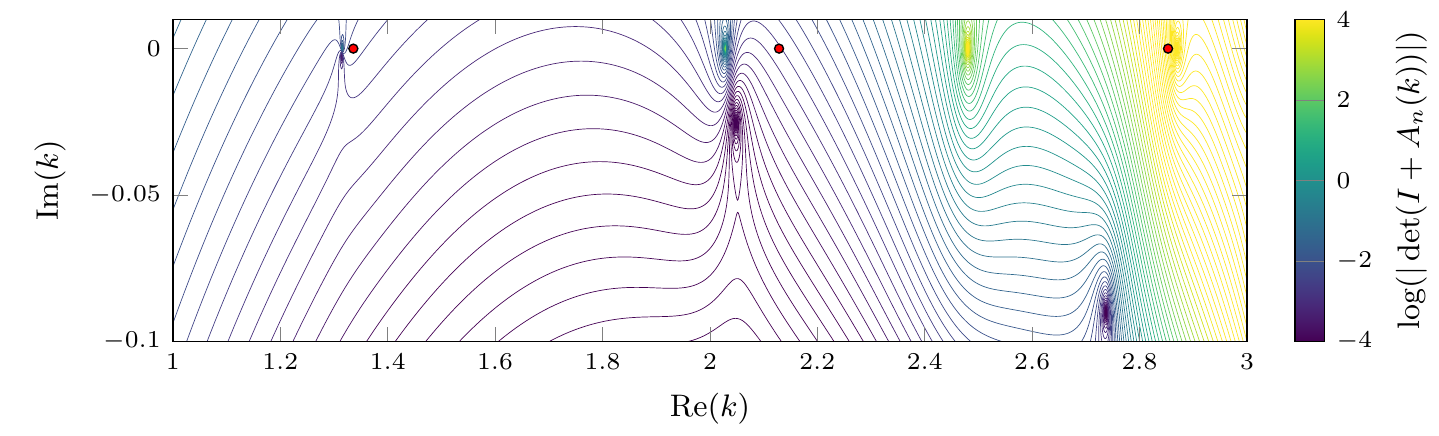}
	\caption{Logarithmic contour plot of $|\det(I+A_n(k))|$ for $n=20$ and opening width $d=1.3$ in a strip below the real axis. Red dots: Dirichlet eigenvalues of the closed resonator chamber.} 
	\label{fig:contour_plot}
\end{figure}
The resonance effects can in fact be seen on the level of the associated PDE. Figure \ref{fig:FEM_fig} shows the finite element approximation of the solution $u$ of \eqref{eq:Lambda_inner_Def} with right hand side $\phi=e_\alpha$, once for $k$ far away from a resonance, and once for $k$ near a resonance. As can be seen, when $k$ is near the resonance the wave penetrates into the chamber, whereas when $k$ is far from a resonance the solution appears to be nearly $0$ inside.
\begin{figure}[H]
	\centering
	\includegraphics{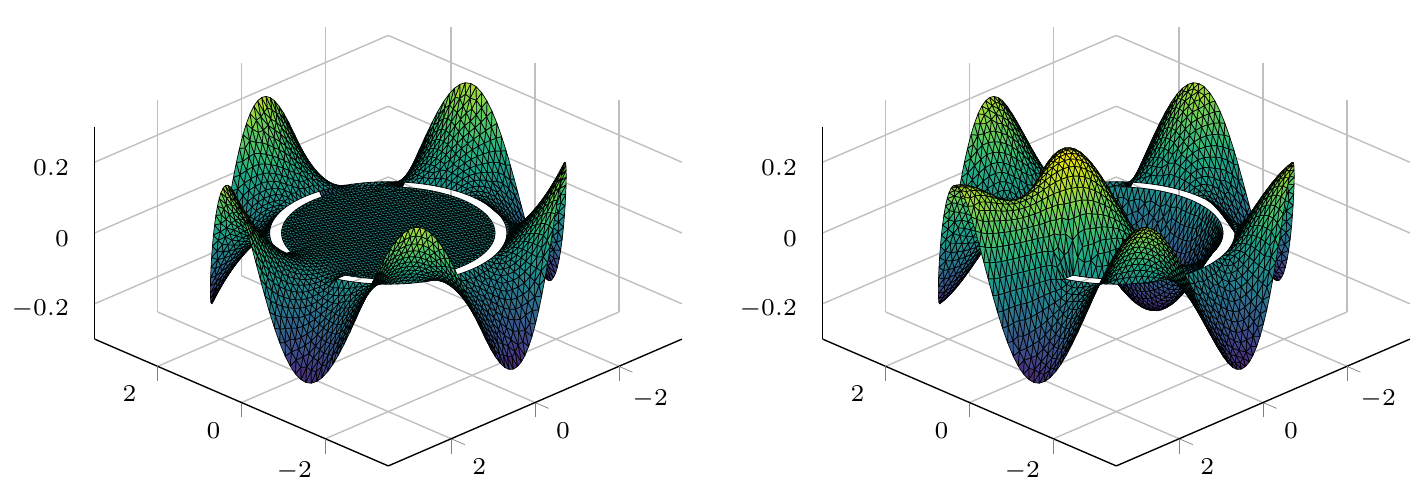}
	\caption{Solution of \eqref{eq:Lambda_inner_Def} with boundary values $\phi=e_\alpha$ for $\alpha=5$. Left: $k=1.0$ (far from resonance),  right: $k=2.049-0.026\i$ (near second resonance)}
	\label{fig:FEM_fig}
\end{figure}
Finally, our results show what happens to the locations of the resonances when the opening width $d$ is varied. Figure \ref{fig:heat_map} shows the contour plot of $|\det(I+A_n(k))|$ for an opening width of $d=1.0$. One can clearly see that the zeros of $\det(I+A_n(k))$ move closer to the real axis and become narrower, as is expected from the abstract theory (cf. \cite[Ch. 23.3]{HislopSigal}). 
\begin{figure}[H]
	\centering
	\includegraphics{ 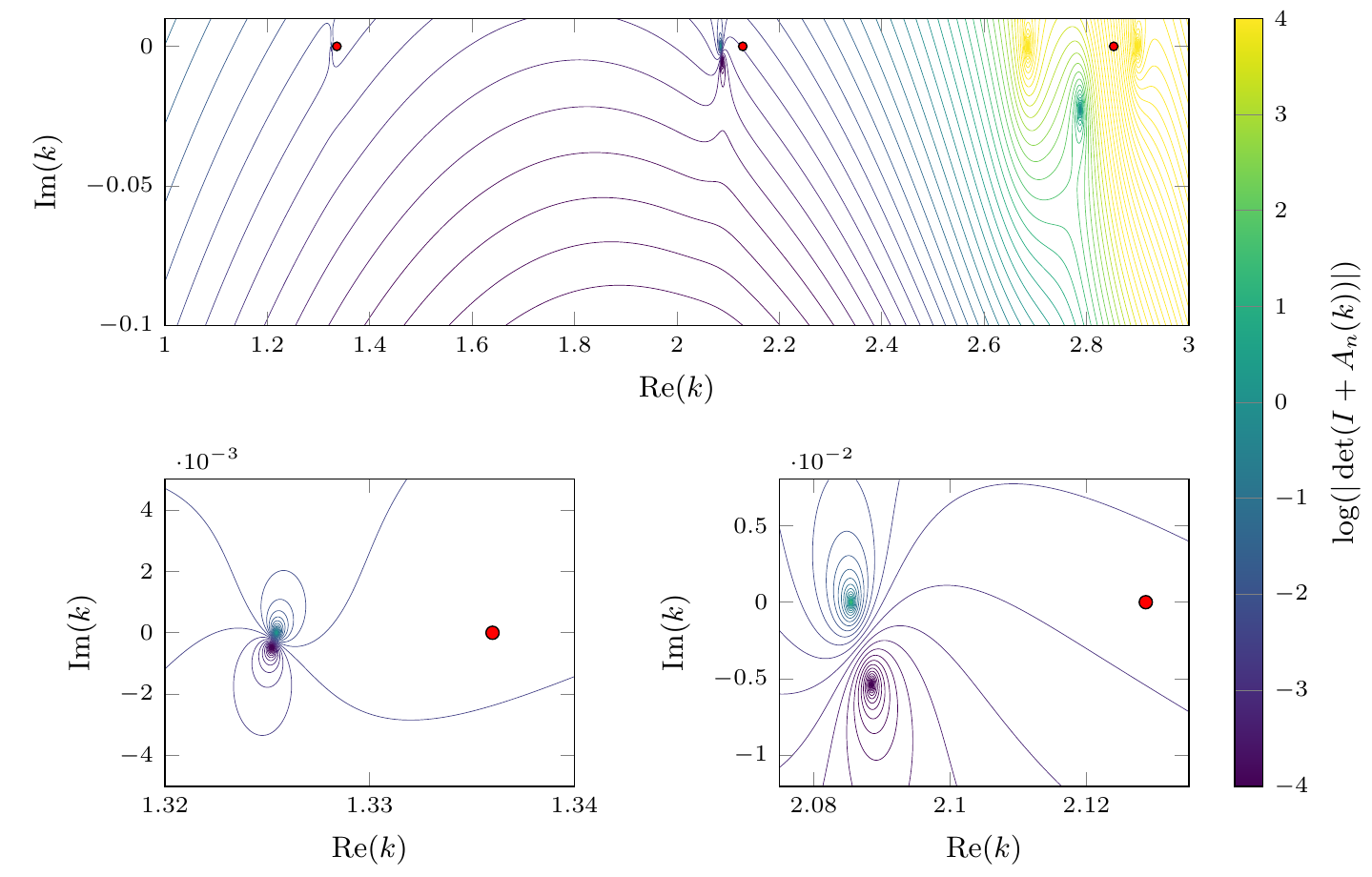}
	\caption{Top: Logarithmic contour plot of $|\det(I+A_n(k))|$ for $n=20$ and opening width $d=1.0$ in a strip below the real axis; Bottom left: zoomed in around first resonance; Bottom right: zoomed in around second resonance. Red dots: Dirichlet eigenvalues of the closed resonator chamber.} 
	\label{fig:heat_map}
\end{figure}
Table \ref{table:res} summarizes the values of the first three eigenvalues and associated resonances in both cases (that is, with openings $d=1.0$ and $d=1.3$). One can observe that when the opening is narrower, the resonances are closer to their eigenvalue counterparts:
\begin{table}[H]
	\begin{tabular}{l c c c}\toprule
		& \textbf{Eigenvalue}\; & \multicolumn{2} {c}{\textbf{Resonance}}\\
		&&$\boldsymbol{d\!=\!1.3}$&$\boldsymbol{d\!=\!1.0}$\\
				\hline
		first  & $1.3360$ & $1.315-0.002\i$ & $1.325-0.001\i$ \\
		second & $2.1287$ & $2.049-0.026\i$ & $2.089-0.005\i$ \\
		third  & $2.8531$ & $2.738-0.090\i$ & $2.788-0.023\i$  \\
		\bottomrule
	\end{tabular}
	\caption{Approximate numerical values of resonances and eigenvalues corresponding to the contour plots in Figures \ref{fig:contour_plot} and \ref{fig:heat_map} respectively. The values in the second and third column were obtained as the local minima of $|\det(I+A_n(k))|$.}
\label{table:res}\end{table}

\subsubsection*{Convergence analysis}
In order to verify Theorem \ref{th:mainth} and estimate the practical rate of convergence, we compute a successive series of approximations for the first resonance for the opening $d=1.3$. Because for large values of $n$ the choices of the meshing parameter made above ($h\sim n^{-11}$) and the threshold $1/\log n$ are not feasible for implementation, we demonstrate that even a simpler version of \eqref{eq:GammaQR} exhibits good convergence properties. 

\begin{figure}[h]
	\centering
	\includegraphics{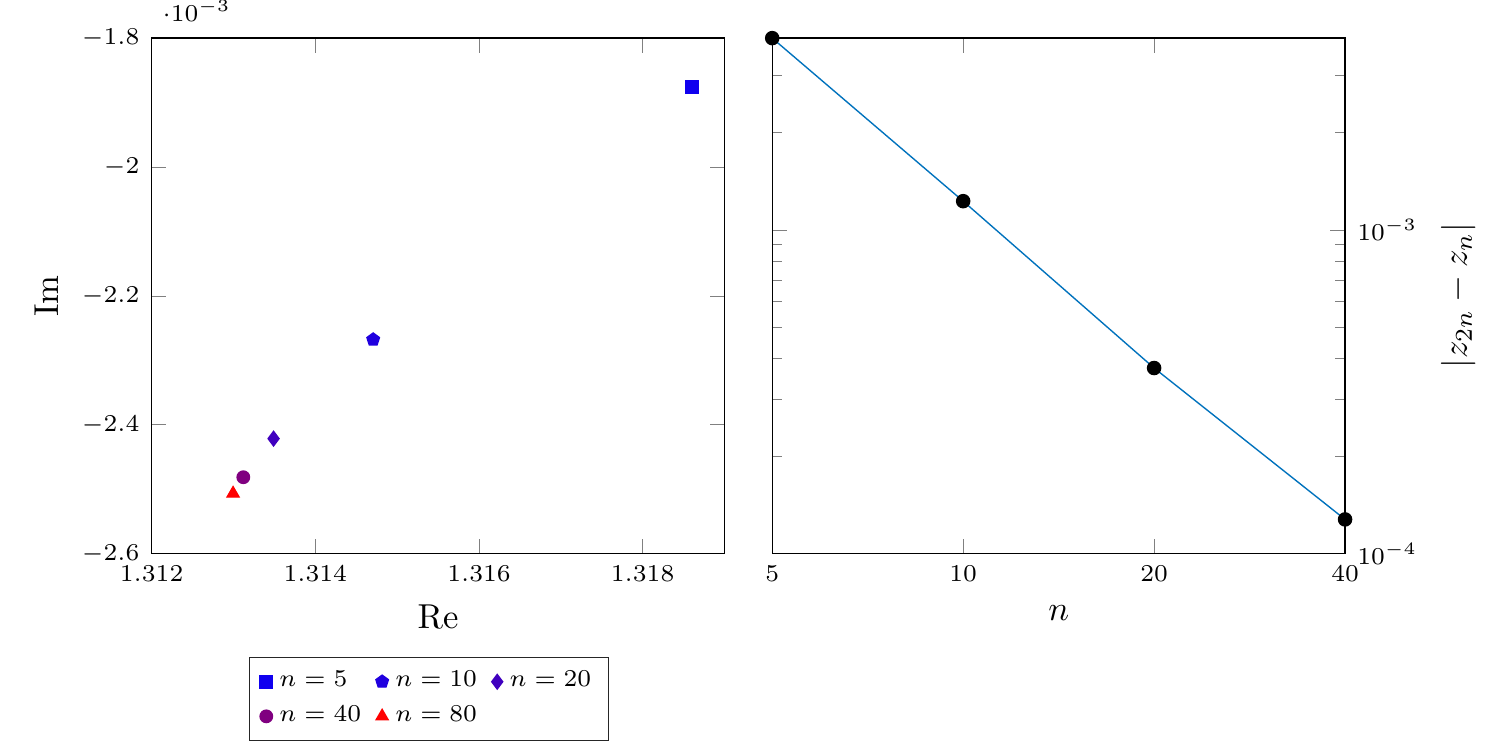}
	\caption{Left: Approximations $z_n$ of the first resonance for opening $d=1.3$ and $n\in\{5,10,20,40,80\}$; Right: Plot of the successive error $|z_{2n}-z_n|$ for $n\in\{5,10,20,40\}$.}
	\label{fig:convergence_plot}
\end{figure}
\begin{table}[h]
	\begin{tabular}{l c c c c c}\toprule
		$n$ & $z_n$ \\
		\hline
		5  & $1.3186 - 0.0019\i$  \\
		10 & $1.3147 - 0.0023\i$  \\
		20 & $1.3135 - 0.0024\i$  \\
		40 & $1.3131 - 0.0025\i$  \\
		80 & $1.3130 - 0.0025\i$  \\
		\bottomrule
	\end{tabular}
	\caption{Approximations $z_n$ of the first resonance for opening $d=1.3$}
	\label{table:convergence_table}
\end{table}

More precisely, we chose $h = n^{-1}$ and for $n\in\{5,10,20,40,80\}$ performed gradient descent steps until $|\det(I+A_n(z))|<10^{-7}$ (note that for all values of $n$ considered this is much less than $1/\log n$). The resulting points $z_n$, and their relative differences are shown in Figure \ref{fig:convergence_plot} and their first digits are given in Table \ref{table:convergence_table}. A comparison between the values in Tables \ref{table:res} and \ref{table:convergence_table} suggests that only the first two digits of the rough approximations in Table \ref{table:res} should be trusted (this is in accordance with the lattice distance in Figure \ref{fig:contour_plot}, which was chosen to be $10^{-3}$).
Finally, the decay of errors in the right-hand plot of Figure \ref{table:convergence_table} suggests a convergence rate of approximately $n^{-1.7}$.

\subsection{Four circular Neumann holes}\label{sec:neumann}
In many applications it is necessary to consider not only Dirichlet, but also Neumann boundary conditions. An important real life situation in which this is the case is presented by  resonance effects of water waves caused by submerged objects. A concrete question which has entailed a vast body of scientific literature is this: do the pillars of offshore structures, such as oil drilling platforms, introduce a positive interference of water waves that could damage the structure itself? At what frequencies are such positive interferences expected to occur? (cf. \cite{SM75,LE90,EP96}.) This section contains a short study of the latter question, together with some numerical experiments.
\begin{figure}[H]
	\centering
	\includegraphics{ 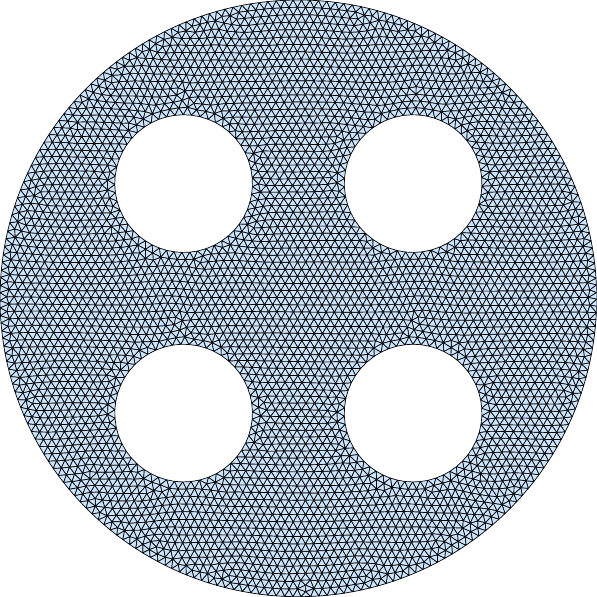}
	\caption{Triangulation of $B_R\setminus\overline U$ for $U$ consisting of four circular Neumann holes.}
	\label{fig:EP_mesh}
\end{figure}

The approach outlined in Sections \ref{sec:DtN}, \ref{sec:Matrix_Elements} and \ref{sec:Algorithm} can be adapted to Neumann boundary conditions on the boundary of the obstacle $U$, given a set of stronger assumptions. In this section, we will briefly outline the ideas and present some numerical results. 

Let now $\Omega_{R}$ denote the class of all bounded domains $U\Subset B_{R-1}\subset\R^2$  with a $\mathcal{C}^2$ boundary. We consider the associated Neumann Laplacian:
\begin{align*}
	H:=-\Delta_\mathrm{N}\quad\text{on}\quad L^2(\R^2\setminus \overline U).
\end{align*}
In addition, let us assume that each obstacle $U$ comes with an associated regular parametrization $\gamma_U:[a_1^U,b_1^U]\cup[a_2^U,b_2^U]\cdots\cup[a_{n_U}^U,b_{n_U}^U]\to\R^2$ (parametrized by its arc length) of its boundary. Define
\begin{align*}
	\Lambda_\mathrm{N} := \Lambda \cup \left\{U\mapsto\gamma_U(x)\,\middle|\,x\in\bigcup_{k=1}^{n_U}[a_{k}^U,b_{k}^U]\right\},
\end{align*}
where $\Lambda$ was defined in \eqref{eq:evaluation_set}. Under this additional hypothesis, Sections \ref{sec:DtN}, \ref{sec:Matrix_Elements} and \ref{sec:Algorithm} (with modifications) show that
\begin{align*}
	\SCI(\Omega_{R},\Lambda_{\mathrm{N}},\Res(\cdot),(\mathrm{cl}(\C),d_\mathrm{AW}))=1.
\end{align*}
Indeed, the discussions in Sections \ref{sec:DtN} and \ref{sec:Algorithm} remain true with trivial changes for Neumann conditions on $\del U$. The only major difference is in Section \ref{sec:FEM}, where the finite element approximation of $v_\alpha$ is constructed. A version of Prop. \ref{prop:FEM_estimate} in the Neumann case can be obtained from our new assumptions as follows. 
\begin{enumi}
	\item For each $U$, choose a uniform discretization of the intervals $[a_{k}^U,b_{k}^U]$. This induces an oriented discretization of $\del U$ via $\gamma_U$.
	\item Using well known meshing algorithms (e.g. \cite[Ch. 5]{B98}), construct a fitted mesh of $B_R\setminus \overline U$ based on the boundary discretization from (i).
	\item Apply \cite[Th. 4.1]{BE84} to obtain the desired FEM error estimate.
\end{enumi}
We implemented the algorithm thus obtained (with the caveats (a), (b) mentioned at the beginning of the section) for an obstacle consisting of four circular holes, as shown in Figure \ref{fig:EP_mesh}.

This geometry had previously been studied in the context of water waves interacting with a circular array of cylinders \cite{EP96}. Figure \ref{fig:EP_heat} shows the output of our algorithm for four circular holes of radius 0.6, situated at the corners of a square of edge length 2.
\begin{figure}[H]
	\centering
	\includegraphics{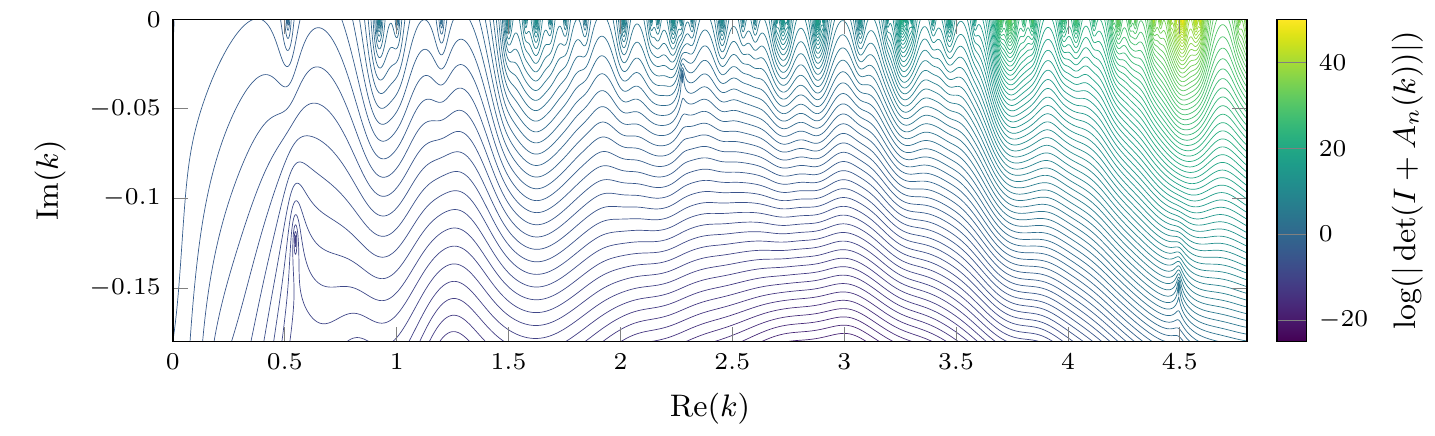}
	\caption{Logarithmic contour plot of $|\det(I+A_n(k))|$ for $n=30$ in a strip below the real axis}
	\label{fig:EP_heat}
\end{figure}
The heat map in Figure \ref{fig:EP_heat} suggests 3 resonances located approximately at the following values:
\begin{table}[H]
\begin{tabular}{l l}\toprule
	 & \textbf{Resonance}\\
	\hline
	first & $0.545-0.123\i$\\
	second & $2.277-0.032\i$\\
	third & $4.495-0.149\i$\\
	\bottomrule
\end{tabular}
\caption{Approximate numerical values of resonances corresponding to the contour plot in Figure \ref{fig:EP_heat}.}
\end{table}
A comparison with \cite[Fig. 8]{EP96} shows good agreement.

\appendix

\section{Details on implementation}
As indicated in Section \ref{sec:num}, our implementation differs from the theoretical algorithm \eqref{eq:GammaQR}. Here we give some of the technical details. First, we note that the matrix elements of $M_\mathrm{inner}$ can be computed using only integrals over $\cO$ and the solution $u_\alpha$ of \eqref{eq:Lambda_inner_Def} (with right hand side $e_\alpha$). Indeed, by Green's identity one has
\begin{align*}
	\langle e_\beta, M_\mathrm{inner} e_\alpha\rangle_{L^2(\del B_R)} &= \int_{\del B_R} \overline e_\beta \del_\nu u_\alpha \,dS
	\\
	&= \int_{\cO} \overline E_\beta\Delta u_\alpha \,dx + \int_{\cO} \overline{\nabla E_\beta}\cdot\nabla u_\alpha\,dx
	\\
	&= -k^2 \int_{\cO} \overline E_\beta u_\alpha \,dx + \int_{\cO} \overline{\nabla E_\beta}\cdot\nabla u_\alpha\,dx.
\end{align*}
A finite element approximation of the last line yields
\begin{align}\label{eq:FEM_M_inner}
	\langle e_\beta, M_\mathrm{inner} e_\alpha\rangle_{L^2(\del B_R)} &\approx 
	-k^2\sum_{i,j\in L_h} \overline E_\beta(i)\mathfrak m_{ij}u_\alpha^h(j) + \sum_{i,j\in L_h} \overline E_\beta(i)\mathfrak s_{ij}u_\alpha^h(j)
	\\
	&= \sum_{i,j\in L_h} \overline E_\beta(i)\big(\mathfrak s_{ij}-k^2\mathfrak m_{ij}\big)u_\alpha^h(j),
\end{align}
where $\mathfrak m, \mathfrak s$ denote the mass and stiffness matrices of the FEM scheme (cf. \eqref{eq:mass_matrix}, \eqref{eq:stiffness_matrix}) and $u_\alpha^h$ denotes the FEM approximation of $u_\alpha$. Then, using the notation from Section \ref{sec:approximation_procedure}, one has
\begin{align*}
	\Lin(k) + \Lout(k) &= \f1R(\mathcal N - P_0) + \mathcal H(k) + \Lin(k),
\end{align*}
which after some simplifications yields the formula
\begin{align}\label{eq:modified_numerics_formula}
	\f{R}{2}\mathcal N^{-\f12}(\Lin(k) + \Lout(k))\mathcal N^{-\f12} = \f12\left(I - P_0 + R\mathcal N^{-\f12}(\mathcal H(k) + \Lin(k))\mathcal N^{-\f12}\right),
\end{align}
whose matrix elements can be computed entirely from the values of the Hankel functions and \eqref{eq:FEM_M_inner}. Note that by the theory in Section \ref{sec:DtN} we have $\Lin = \f1R\mathcal N + \mathcal J + \mathcal K$, so the right hand side of eq. \eqref{eq:modified_numerics_formula} is in fact of the form $I+[\text{compact}]$, as needed.
Eq. \eqref{eq:modified_numerics_formula} suggests the following procedure.

\medskip
\begin{algorithm}[H]
\SetAlgoLined
 Fix number of lattice points $n\in\N$\;
 Fix cutoff threshold $\eps>0$\;
 Fix triangulation mesh size $h>0$\;
 Fix matrix size $N\in\N$\;
 Initialize lattice $G_n\subset\C^-$\;
 Generate triangular mesh $\cO^h$\;
 Initialize set of resonances $\Res:=\{\}$\;
 \For{$k\in G_n$}{
	 \For{$-N\leq\alpha,\beta\leq N$}{
		  Compute FEM approximation $u_\alpha^h$\;
		  Compute $(\alpha,\beta)$-matrix element of \eqref{eq:modified_numerics_formula}  using \eqref{eq:FEM_M_inner}\;
	}
	Compute $D:=\Big|\det\Big(\f12\left(I - P_0 + R\mathcal N^{-\f12}(\mathcal H(k) + \Lin(k))\mathcal N^{-\f12}\right)_{\scriptscriptstyle{(2N+1)\times (2N+1)}}\Big)\Big|$\;
	  \If{$D<\eps$}{
		   $\Res := \Res\cup \{k\}$\;
	  }
 }
 \Return{$\Res$}
 \caption{Compute Resonances}\label{Alg:1}
\end{algorithm}

\medskip
The Matlab implementation of Algorithm \ref{Alg:1} which yielded Figures \ref{fig:contour_plot}-\ref{fig:EP_heat} is available at \url{https://github.com/frank-roesler/SeashellComp}. The code for the finite element approximation was partially adapted from \cite{bartels}.

\section{The Solvability Complexity Index Hierarchy}
\label{app:sci-hierarchy}
\begin{de}[The Solvability Complexity Index Hierarchy]
\label{1st_SCI}
The $\SCI$ Hierarchy is a hierarchy $\{\Delta_k\}_{k\in{\N_0}}$ of classes of computational problems $(\Om,\Lambda,\Xi,\mathcal M)$, where each $\Delta_k$ is defined as the collection of all computational problems satisfying:
\begin{align*}
(\Om,\Lambda,\Xi,\mathcal M)\in\Delta_0\quad &\qquad\Longleftrightarrow\qquad \mathrm{SCI}(\Om,\Lambda,\Xi,\mathcal M)= 0,\\
(\Om,\Lambda,\Xi,\mathcal M)\in\Delta_{k+1} &\qquad\Longleftrightarrow\qquad \mathrm{SCI}(\Om,\Lambda,\Xi,\mathcal M)\leq k,\qquad k\in\N,
\end{align*}
with the special class $\Delta_1$  defined as the class of all computational problems in $\Delta_2$ with a convergence rate:
\begin{equation*}
(\Om,\Lambda,\Xi,\mathcal M)\in\Delta_{1} \qquad\Longleftrightarrow\qquad
 \exists  \{\Gamma_n\}_{n\in \mathbb{N}} \text{ s.t. } \forall  T\in\Omega, \ d(\Gamma_n(T),\Xi(T)) \leq 2^{-n}.
\end{equation*}
Hence we have that $\Delta_0\subset\Delta_1\subset\Delta_2\subset\cdots$
\end{de}

In some cases these classes can be further refined to pinpoint precisely how the convergence of the algorithms occurs. For instance, in the case where the metric space is the space of all closed subsets of $\C$ equipped with the Attouch-Wets metric, one could ask whether the approximations $\Gamma_{n_1,n_2,\dots,n_k}(T)$ lie in a small neighborhood of the problem function $\Xi(T)$ for every $T$, or conversely, whether  the problem function $\Xi(T)$ lies in a small neighborhood of the approximations $\Gamma_{n_1,n_2,\dots,n_k}(T)$  for every $T$. Letting $B_\epsilon(A)$ denote the usual Euclidean $\epsilon$-neighborhood of a set $A$ in $\C$, the most basic definition, defining the first level of this hierarchy, is:

\begin{de}[The $\SCI$ Hierarchy For $(\mathrm{cl}(\C),d_{\mathrm{AW}})$: $\Sigma_1$ and $\Pi_1$]
\label{def:pi-sigma}
Consider the setup in Definition \ref{1st_SCI} assuming further that $\mathcal{M}=(\mathrm{cl}(\C),d_{\mathrm{AW}})$. Then we can define the following subsets of $\Delta_2$:
	\begin{equation*}
	\begin{split}
	\Sigma_{1} &= \Big\{(\Om,\Lambda,\Xi,\mathcal M) \in \Delta_{2} \ \vert \  \exists\{ \Gamma_{n}\}_{n\in\N}\text{ s.t. }    \forall T \in \Omega, \\
	&\qquad\qquad\qquad\qquad \Gamma_n(T)\to\Xi(T)\text{ and }\Gamma_{n}(T)  \subset B_{2^{-n}}(\Xi(T)) \Big\}
	\\
	\Pi_{1} &= \Big\{(\Om,\Lambda,\Xi,\mathcal M) \in \Delta_{2} \ \vert \  \exists \{\Gamma_{n}\}_{n\in\N}  \text{ s.t. }  \forall T \in \Omega,
	\\
	&\qquad\qquad\qquad\qquad \Gamma_n(T)\to\Xi(T)\text{ and } \Xi(T)\subset B_{2^{-n}}(\Gamma_{n}(T))   \Big\}. 
	\end{split}
	\end{equation*}
It can be shown that $\Delta_1=\Sigma_1\cap\Pi_1$.

One can further define higher classes $\Sigma_k$ and $\Pi_k$ in an ``intertwining'' way, so that $\Sigma_k,\Pi_k\subset\Delta_{k+1}$ and for $k\in\{1,2,3\}$, $\Delta_k=\Sigma_k\cap\Pi_k$, see Figure \ref{fig:sci-hir}. We omit this here, and refer to \cite{AHS}.
\end{de}

\begin{figure}[H]
\centering
\begin{tikzpicture}
\draw (0,0) circle (1) (-0.3,.9)  node [text=black,above] {$\Sigma_k$}
      (1,0) circle (1) (1.3,.9)  node [text=black,above] {$\Pi_k$}
      (.5,0) circle (1.95) (.5,1.9) node [text=black,above] {$\Delta_{k+1}$}
       (.5,-0.2) node [text=black,above] {$\Delta_{k}$};
\end{tikzpicture}
\caption{The $\SCI$ Hierarchy for $k\in\{1,2,3\}$. The Helmholtz resonator computational problem lies in $\Delta_2$.}
\label{fig:sci-hir}
\end{figure}
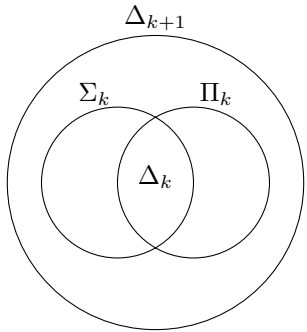

\subsection*{What about our computational problem?}
In view of Definitions \ref{1st_SCI} and \ref{def:pi-sigma}, and considering that  Theorem \ref{th:mainth} amounts to showing that our computational problem lies in $\Delta_2$, one is naturally led to ask whether this can be improved: is the computational problem in $\Sigma_1$? $\Pi_1$? or perhaps even $\Delta_1$?

To show any such results, one would have to keep track of all the constants appearing throughout the proofs and bound them \emph{a priori}. While this might be possible for most constants (given certain \emph{a priori} bounds on the curvature of $\del U$), there is one constant that we cannot estimate: the constant $\tilde C$ appearing in \eqref{eq:analytic_decay}, which controls the width of the zeroes of $\det(I+A)$. 
Due to this, at present there is no hope to obtain error bounds. However, the question of whether (under suitable additional assumptions) one can estimate $\tilde C$ (and all other constants) is the subject of ongoing research.

\bibliography{mybib}
\bibliographystyle{abbrv}

\end{document}